\definecolor{grey}{rgb}{0.5,0.5,0.5}
\definecolor{gray}{rgb}{0.5,0.5,0.5}
\long\def\beginpgfgraphicnamed#1#2\endpgfgraphicnamed{\message{Including file instead of processing TikZ picture.}\includegraphics{#1}}
\let\JKifAnot=\iffalse
\newcommand \NewB{\marginpar{\tikz \fill[orange] (1ex,1ex) circle (1ex); new[\let\eatthischar]}}
\newcommand \NewE{\marginpar{\let\eatthischar[] \tikz \fill[gray] (1ex,1ex) circle (1ex);}}
\newcommand \NewBrefsug{\marginpar{\tikz \fill[red] (1ex,1ex) circle (1ex); new[\let\eatthischar]}}
\newcommand \NewErefsug{\marginpar{\let\eatthischar[] \tikz \fill[black] (1ex,1ex) circle (1ex);}}
\newcommand \NewBsmall{\bgroup\color{red}}
\newcommand \NewEsmall{\egroup}
\newcommand \NewNewBsmall{\bgroup\color{blue}}
\newcommand \NewNewEsmall{\egroup}
\newcommand \NewBrefsugSmall{\bgroup\color{red!70!blue}}
\newcommand \NewErefsugSmall{\egroup}
\newcommand \NewChangeSmall{{\color{orange}$\bullet_{\text{ch}}$}}
\newcommand \NewNewRmSmall{{\color{blue}$\bullet_{\text{rm}}$}}
\newcommand \NewMovedHereSmallB{{\color{orange}$\bullet_{\text{mv}}^{\mathbb [}$}} %
\newcommand \NewMovedHereSmallE{{\color{orange}$\bullet_{\text{mv}}^{\mathbb ]}$}} %
\newcommand \NewMovedSmallL{{\color{orange}$\bullet_{\text{mv}}^\leftarrow$}} %
\newcommand \NewB{}
\newcommand \NewE{}
\newcommand \NewBrefsug{}
\newcommand \NewErefsug{}
\newcommand \NewBsmall{}
\newcommand \NewEsmall{}
\newcommand \NewNewBsmall{}
\newcommand \NewNewEsmall{}
\newcommand \NewBrefsugSmall{}
\newcommand \NewErefsugSmall{}
\newcommand \NewChangeSmall{}
\newcommand \NewNewRmSmall{}
\newcommand \NewMovedHereSmallB{} %
\newcommand \NewMovedHereSmallE{} %
\newcommand \NewMovedSmallL{} %
\newcommand{\JKXallowbreak}{\penalty 9999\relax}%
\newcommand\qqeedd{\quad\hfill\hbox{\rlap{$\llcorner$}$\urcorner$}}
\newcommand*\indn[1][]{^{(n#1)}}
\newcommand*\indX[1][n]{^{(#1)}}
\newcommand\RAD{\textup{rad}}
\newcommand\JOneThree{J_{13}^{24}}
\newcommand\JOneTwo {J_{12}^{34}}
\newcommand\RadAndOneThree{\RAD\;\& \JOneThree}
\newcommand\RadAndOneTwo {\RAD\;\& \JOneTwo}
\newcommand*\RadAndX[1]{\RAD\;\& #1}
\newcommand{\ImageMeasureFM}{\#}
\newcommand{\ImageMeasureFV}{\#}
\providecommand{\ImageMeasureFM}{{\#_{mea}}}
\providecommand{\ImageMeasureFV}{{\#_{var}}}
\newcommand{\ImageVarifold}{{\#\!\#}}
\newcommand{\mrest}{\mathbin{\llcorner}}
\newcommand{\R}{\mathbb R}
\newcommand{\Complex}{\mathbb C}
\newcommand{\N}{\mathbb N}
\newcommand{\Z}{\mathbb Z}
\DeclareMathOperator{\Span}{span}
\DeclareMathOperator{\spt}{spt}
\DeclareMathOperator{\Log}{Log}
\DeclareMathOperator{\diver}{div}
\DeclareMathOperator{\sgn}{sgn}
\DeclareMathOperator{\range}{range}
\DeclareMathOperator{\dom}{dom}
\DeclareMathOperator{\mass}{\mathbf M}
\DeclareMathOperator{\VarTan}{Var\,Tan}
\DeclareMathOperator{\TanGeneral}{Tan}
\DeclareMathOperator{\inter}{int}
\newcommand{\Lebesgue}{\mathcal L}
\newcommand{\Hausd}{\mathcal H}
\newcommand{\Continuous}{\mathcal C}
\newcommand{\Sphere}{\mathbb S}
\newcommand{\Sone}{\Sphere^1}
\newcommand{\setcolon}{:}
\newcommand{\vari}{\delta}
\newcommand{\intd}{\,\mathrm{d}}  %
\newcommand{\intdx}{\mathrm{d}}   %
\newcommand{\Mucircles}{\mathcal M}
\spnewtheorem*{interpretation}{Interpretation.}{\it}{\rm}
\spnewtheorem*{solution*}{Solution.}{\it}{\rm}
\numberwithin{proposition}{section}
\numberwithin{theorem}{section}
\numberwithin{lemma}{section}
\numberwithin{exercise}{section}
\numberwithin{remark}{section}
\begin{document}
\title{Non-unique conical and non-conical tangents to rectifiable stationary varifolds in $\mathbb R^4$\thanks{Research
supported by grants P201/12/0290 of GA\,\v{C}R, IAA100190903 of GA\,AV and RVO: 67985840.}}

\journalname{submitted to Calculus of Variations and Partial Differential Equations --- }
\author{Jan Kol\'a\v{r}}
\institute{
 Institute of Mathematics
        \\
 Czech Academy of Sciences
        \\
 \v{Z}itn\'a 25
        \\
 115 67 Praha 1
        \\
 Czech Republic
        \\
        \email{kolar@math.cas.cz}%
}
\date{December 30, 2012 \ \ revised May 15, 2014 \ \ \ \ \ \ \ Received: date / Accepted: date}
\maketitle

\begin{abstract}
  We construct
  \NewMovedHereSmallB
  a rectifiable
  \NewMovedHereSmallE
  stationary $2$-varifold in $\mathbb R^4$
  with non-conical,
  and hence non-unique,
  tangent varifold at a point.
  This answers a question of
  L.~Simon
  (Lectures on geometric measure theory, 1983, p.~243)
  and
    \NewBsmall
    provides a new example for
    \NewEsmall
  a related question of W.\,K. Allard
  (On the first variation of a varifold,
   Ann. of Math.,
  1972, p.~460).
  \NewMovedSmallL

  There is also a (rectifiable) stationary $2$-varifold in $\mathbb R^4$
  that has more than one conical tangent varifold at a point.
\keywords{stationary varifold\and varifold tangent\and tangent cone\and non-unique\and non-conical\and minimal surface\and regularity}
\subclass{28A75, 49Q20, 35B65}
\end{abstract}

\tableofcontents

\section{Introduction}

 \NewB

\subsection{General context}

  Geometrical measure theory uses various ``generalized surfaces'' to reach its goals,
  and the varifolds are among them.
  Most of them allow,
  i.a.,
  countably many pieces of surface that are interconnected into simple or complicated networks
  (Figure~\ref{fig:network}).
  The classes of surfaces are designed to have compactness properties and to allow to obtain a generalized surface
  of least area among those that, say, span a given boundary.
  The next, equally important, step is to explore smoothness and regularity properties of the minimizer.

  {\em Uniqueness of tangents} is both an important attribute encompassed in various definitions of smoothness and regularity,
  and an important tool. Here a tangent is (informally) defined as a limit of a sequence of blow-ups at a given point.
  The tangents implicitly appear already in the basic calculus of real functions:
  A Lipschitz function on $\R^n$ is differentiable at a point $x_0$ if and only if it admits a unique tangent
  \NewNewBsmall
  at $x_0$ and the tangent is a hyperplane.
  \NewNewEsmall
  For $n=1$, the existence of the two one-sided derivatives at $x_0$ is equivalent to uniqueness of the tangent at $x_0$
  and the tangent is then necessarily a cone.

  \message{--figure--}
  \def\rrrr{1.8}
  \begin{figure}[hbt]
  \begin{center}
  \def\SCALE{56mm/36/2}
  \beginpgfgraphicnamed{varif2012-fig-network}
  \def\SCALEAAxx{56mm/36/10 }
  \def\SCALEAAyy{56mm/8}
  a)
  \begin{tikzpicture}[x=\SCALEAAxx,y=\SCALEAAyy, line cap=round]
    \relax
  \clip
  [xscale={(\SCALE)/(\SCALEAAxx)},yscale={(\SCALE)/(\SCALEAAyy)}, shift={(1.5,1.5)}]
  (0,0) rectangle (2*20,2*30);
  \def\thickness{2.4pt}
  \def\thicknessA{\thickness/3} %
  \def\thicknessB{\thickness/3} %
  \foreach \YY in {8, 6, 4, 2}
  {
  \foreach \angle in {0, 30,..., 355}
  {
  \draw [line width={\thicknessA}]
    (\angle,\YY) -- (\angle,{\YY-0.6});
  }
  \foreach \angle in {0, 30,..., 355}
  {
  \draw [line width={\thicknessB}]
    (\angle,{\YY-0.6}) -- ({\angle+15},{\YY-1.0})
    ({\angle+30},{\YY-0.6}) -- ({\angle+15},{\YY-1.0})
    ;
  \draw [line width={\thicknessA}]
    ({\angle+15},{\YY-1.0}) -- ({\angle+15},{\YY-1.6})
    ;
  \draw [line width={\thicknessB}]
    ({\angle+15},{\YY-1.6}) -- ({\angle},{\YY-2.0})
    ({\angle+15},{\YY-1.6}) -- ({\angle+30},{\YY-2.0})
    ;
  }
  }
  \end{tikzpicture}
  \qquad
  b)
  \message{--tikzpicture--}
  \begin{tikzpicture}[x=\SCALE,y=\SCALE, line cap=round]
    \relax
  \clip
  (-20,-30) rectangle (20,30);
  \foreach \RR/\thickness in {36/2.4pt, 4/{2.4pt/1.5625}, {4/9}/{2.4pt/1.953125}}
  {
  \foreach \angle in {0, 30,..., 355}
  {
  \draw [line width={\thickness/2}]
    (\angle:\RR) -- (\angle:{\RR/2});
  }
  \foreach \angle in {0, 30,..., 355}
  {
  \draw [line width={\thickness/3}]
    (\angle:{\RR/2}) -- ({\angle+15}:{\RR/3})
    ({\angle+30}:{\RR/2}) -- ({\angle+15}:{\RR/3})
    ;
  \draw [line width={\thickness/2.5}]
    ({\angle+15}:{\RR/3}) -- ({\angle+15}:{\RR/6})
    ;
  \draw [line width={\thickness/3.66}]
    ({\angle+15}:{\RR/6}) -- ({\angle}:{\RR/9})
    ({\angle+15}:{\RR/6}) -- ({\angle+30}:{\RR/9})
    ;
  }
  }
  \end{tikzpicture}
  \qquad
  c)
\def\DOpictureROVNY#1#2#3#4#5#6#7{\relax
  \def\RR{#1}\relax
  \def\step{#2}\relax
  \def\Rmult{#3}\relax
  \def\start{#4}\relax
  \def\stopA{#5}\relax
  \def\stopB{#6}\relax
  \def\forDATA{#7}\relax
  \pgfmathparse{0.82*exp(ln(\Rmult)*0.5)}\relax
  \xdef\RmultQQQ{\pgfmathresult}\relax
  \message{--tikzpicture--}\relax
  \begin{tikzpicture}[x=\SCALEAAxx,y=\SCALEAAyy, line cap=round]
  \relax
  \clip
  [xscale={(\SCALE)/(\SCALEAAxx)},yscale={(\SCALE)/(\SCALEAAyy)},
   shift={(-0.5,9.0)}]
  (0,0) rectangle (2*20,2*30);
  \relax
  \def\thickn{4pt}
  \forDATA
  {
  \pgfmathparse{\start+\step}\xdef\startstep{\pgfmathresult}
  \foreach \angle in {\start, \startstep,..., \stopA}
  {
  \draw [line width={\thickness/2}]
    (\angle,\RR) -- (\angle,{\RR*\Rmult})
    (\angle+\step/2,\RR*\RmultQQQ) -- (\angle+\step/2,{\RR*\Rmult});
  }
  \foreach \angle in {\start, \startstep,..., \stopB}
  {
  \draw [line width={\thickness/3}]
    (\angle,{\RR/1})  -- ({\angle+\step/2},{\RR*\RmultQQQ})
    ({\angle+\step},{\RR/1})  -- ({\angle+\step/2},{\RR*\RmultQQQ})
    ;
  }
  \message{^^J----- \RR : \step : \Rmult : \RmultQQQ ------- }
  \xdef\RR{\RR*\Rmult}
  \pgfmathparse{\step/2}
  \xdef\step{\pgfmathresult}
  }
  \end{tikzpicture}
}
  \DOpictureROVNY{8}{60}{0.5}{0}{209}{209}{\foreach \thickness in {\thickn, {\thickn/1.9}, {\thickn/1.9/1.9}}}\qquad
   \endpgfgraphicnamed
  \caption{Some networks of segments.
  The first two illustrate analogy and differences of linear and central (radial) configurations.
  The idea contained in the last one is actually used in this paper.
  a) The set of weak limits of sequences of vertical upward shifts of the varifold corresponding to this network is uncountable.
  b) With the unit density on each segment we have non-unique tangents but the varifold is not stationary.
  Although it can be converted to a stationary varifold by assigning suitable densities, it does not provide
  a stationary example with non-unique tangents. The densities necessarily converge to zero near the center,
  and the zero varifold is the unique tangent at the centre.
  c) This network is continually branching and refining in the downward direction (towards an interface line). Such a network
  was used by Brakke \cite[p.~238, 240, 250]{Brakke} in the context of varifolds evolving by its `mean curvature'.
  We use its radial variation in a more complicated arrangement.\relax
  \label{fig:network}}
  \end{center}
  \message{--figure--finito--.}
  \end{figure}

  Likewise, a junction of three smooth curves $\gamma_i\setcolon [0,1) \to \R^2$
  at $x_0\in \R^2$ (Figure~\ref{fig:3curves})
  \message{--figure--}
  \def\rrrr{1.8}
  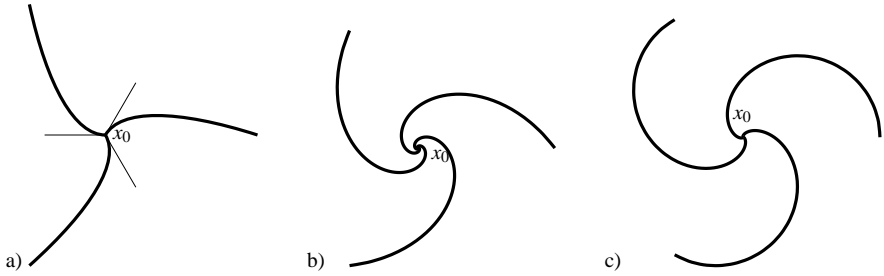
\begin{figure}[hbt]
  \begin{center}
  \beginpgfgraphicnamed{varif2012-fig-spirals}
  \message{--tikzpicture--}
  a)
  \begin{tikzpicture}
  \foreach \angle in {0, 120, 240}
  {
  \draw [very thick]
  (0,0) .. controls (\angle+60:0.4) and (\angle+20:1) .. (\angle:2);
  \draw [very thin]
  (0,0) -- (\angle+60:0.8);
  } ;
  \node [ at={(0,-0.01)}, right] { $x_0$ };
  \end{tikzpicture}
  \qquad
  b)
  \message{--tikzpicture--}
  \begin{tikzpicture}
  \foreach \angle in {0, 120, 240}
  {
   \draw[domain=0:80,samples=100,smooth,variable=\aaa, very thick]
    plot    (\angle+8*\aaa:{\rrrr*exp((\aaa^1 * ln(0.9))})
    ;
  }
  \node [ at={(0.08,0)}, right] { \vtop{\hbox{}\hbox{$x_0$}} };
  \end{tikzpicture}
  \qquad
  c)
  \message{--tikzpicture--}
  \begin{tikzpicture}
  \foreach \angle in {0, 120, 240}
  {
   \draw[domain=0:20,samples=100,smooth,variable=\aaa, very thick]
    plot    (\angle+4*8*\aaa:{\rrrr*exp((\aaa^2 * ln(0.9))})
    ;
  }
  \node [  at={(0,+0.1)}, above ] { $x_0$ };
  \end{tikzpicture}
   \endpgfgraphicnamed
  \caption{Three curves in the plane.
  (Think about this also as a planar section of a hypothetical joint of minimal surfaces in equilibrium.)
  a) Curves smooth up to the end. Unique (and conical) tangent at $x_0$.
  b) The logarithmic spirals. Non-unique tangent. The tangents
  are represented by the rotations of the same picture.
  c) ``Spirals with varying speed'' $r(\alpha)=e^{c(\alpha-\alpha_0)^2}$, $\alpha\ge\alpha_0$.
  The tangents are all $120^\circ$-triples of half-lines, so the
  tangents are conical but non-unique.\label{fig:3curves}}
  \end{center}
  \message{--figure--finito--.}
  \end{figure}
  is considered more regular if the object has
  a
  unique tangent
  \NewNewRmSmall
  at $x_0$
  (the curves have
  \NewNewBsmall
  a non-zero
  \NewNewEsmall
  one-sided derivative at the endpoint).
  In this case they can also be studied as graphs of functions satisfying a differential equation,
  and this can be helpful if they came out of a variational problem.

  The uniqueness of tangents {\em is the regularity}, or a basic degree of the regularity.
  It is an interpretation of what existence of the derivative would be in case we face more general objects
  than graphs of functions.
  In fact, the mathematical language is somewhat inhomogeneous in not having a single word
  for ``the unique tangent'' (of a varifold, e.g.)
  as a counterpart of ``the derivative''.\footnote{Though, in different context the tangent cone is sometimes defined
  to be what we call the unique tangent cone, see for example \cite[p.~159]{Kiselman}.}
  This choice of terminology is not surprising
  since uniqueness of tangents in Geometrical measure theory
  is from the beginnings connected to open problems and later only to partial results.\relax
  \footnote{\relax
          See for example
          \cite[p.~591]{Simon1993}:
          ``... but it is far from obvious (and an open
          question) whether or not $\TanGeneral _X M$ can contain more than one cone $C$ if $X\in \operatorname{sing} M$.''
          The same paper contains a result on the uniqueness of tangents $m$-almost everywhere in the singular set
          \cite[p.~650, (2), (1)]{Simon1993},
          where $m$ is the `top dimension' (e.g., $m=\dim M - 2$, depending on the context).
          }

\medbreak

Now let us give an example of how uniqueness of tangents might be used as a {\em tool}:
It is the result of Sheldon Chang that
the singular set of
area minimizing two-dimensional integral currents
consists of isolated points
and that near any such point their structure
is the same as
that
of a classical branched minimal surface
\cite{Chang}.

Based on the work of B.~White \cite{White1983},
Chang first notes that
(in the case he considers, i.e., the case of Riemannian manifolds)
two dimensional
area minimizing integral currents
have
unique tangent cones
and
he
estimates the rate of pointwise convergence.
He says that this steps are
``necessary for the construction of the first center
manifold.''
\cite[p.~701]{Chang}.

The uniqueness of tangents is also used
in \cite[Chapter~5 and Theorem~0.12]{DLS-Q-revisited}
where an improvement
(in Chang's spirit)
of the size of the singular set of Dir-minimizing Q-valued functions
(on $\Omega \subset \R^2$)
is given.

\subsection{Known results}

The structure of one-dimensional stationary varifolds with density bounded away from zero is
well known \cite{AllAlm1976}.

\smallbreak
 A result about uniqueness of tangent cones of two-dimensional
 soap-bubble-like and soap-film-like minimal surfaces
 ($(M,\xi,\delta)$-minimal sets)
 in $\R^3$
 is contained in \cite{Taylor1976soap}.

Tangent cones to two-dimensional area-minimizing integral currents are unique
by the result of B.~White \cite{White1983}.
As we already noted, this was generalized to Riemannian manifolds by Chang~\cite{Chang}.

\smallbreak

For more general dimensions,
there are results for some special cases, with assumptions related for example to calibration.
Note that the notions of ($\omega$-)positive, (semi\discretionary{-)}{}{-)}calibrated and (pseudo-)holomorphic currents are to a large extent
synonymous (cf.\ \cite{BellettiniTanPosit},  \cite{BellettiniUTC-DukeMathJ}).
Recent results with this kind of assumptions can be found in \cite{PumbergerRiviere2010} ($2$-dimensional), \cite{BellettiniTanPosit}, \cite{BellettiniUTC-DukeMathJ}.
As Bellettini \cite{BellettiniTanPosit} notes, the integrable case $\Complex^n$ of his results follows already by \cite{Siu}.

Very nice result is \cite{Simon1983}, which has a partial generalisation \cite{Simon1994}.
 Simon \cite[Corollary on page 564]{Simon1983} does not assume calibrations.
 The corollary states that if $C$ is a tangent cone to a stationary
 varifold $V$ at a point $p$,
 $C$ has density $1$ on $\spt C \setminus \{ 0 \}$
 (hence $C$ is integral and $0$ is
 the only singular point of $C$)
 then $C$ is the unique tangent cone of $V$ at $p$
 and we have a $\Continuous^2$-flavour of convergence of blowups at $p$.
 \cite{Simon1983} improved earlier result \cite{AA81} which included assumption on integrability of Jacobi fields
 and already covered the case of the cone over the cartesian product of two (but not more, cf.~\cite{White1998-fredall}) standard spheres
 (of arbitrary dimensions).
 \cite{Simon1994} %
 provides similar result where $C=C_0\times \R$ are allowed
 to be certain cases of cylinders with singular set $\{0\} \times \R$.
($C_0$ is assumed to be a strictly minimizing, strictly stable codimension one cone,
and to admit a nice Jacobi-field operator).

        As it can be seen from the above,
        even the codimension one case remains open.
Notably,
\NewNewRmSmall
it remains open whether the hyper-cones
over $\Sphere^3 \times \Sphere^3\times \R$ and $\Sphere^2\times \Sphere^4\times \R$ in $\R^9$
are
always unique tangent cones when they arise at all as multiplicity one tangent cones \cite[p.~1--2]{Simon1994}.
(The question in its formulation in \cite{Simon1994} seemingly concerns the hyper-cones over $\Sphere^3 \times \Sphere^3$ and $\Sphere^2\times \Sphere^4$ in $\R^8$
but that was already solved by \cite[p.~215, (1) and (2)]{AA81}, as well as \cite{Simon1983}.)

Kiselman's example \cite{Kiselman} with non-unique tangent cones is mentioned in the next paragraph.
There is also an example \cite{Kolar} consisting of spirals and a number of lines. It shares with the minimal surfaces
an important property called the ``{\em monotonicity}'' --- for balls centered at an arbitrary fixed point the measure ratio is non-decreassing.
In \cite{CKR:R2} and \cite{CKR:R3} the number of lines is reduced so that the density is, everywhere in the support,
between $1$ and $3+\varepsilon$ (the planar example), or between $1$ and $2+\varepsilon$ (the example in $\R^3$).

\subsection{The questions and the main result}

  The purpose of this paper is to answer a question of L.~Simon \cite[p.~243]{Simon}.
  Simultaneously we provide a new example for a related question of W.\,K.~Allard \cite[p.~460]{Allard1972}.

  Allard's question
  was in a different spirit already solved by \cite{HM}
  because Allard's formulation allowed non-stationary varifolds.
  It was also answered by Kiselman \cite{Kiselman}, who constructed a closed positive current in $\mathbb C^2$
  with non-unique tangent cones.
  The current is not rectifiable since its support contains
  separating $3$-dimensional surfaces created by use of $\max$
  in \cite[(4.3)]{Kiselman}, at least when applied as described in Examples 4.2 and 4.3 \cite{Kiselman}.
  He also uses smoothing by convolution.
  Kiselman's example was generalized to general bidegree $(p,p)$ in \cite[Theorem 3.11]{Blel}.
  Also this example is not rectifiable since the current $W=i\partial\bar\partial F$ is added on \cite[p.~528, p.~529]{Blel},
  where $F$ equals a power of $-\Log\left|z\right|^2$ in a neighbourhood of $0$.

 \NewE %

 \medbreak

              The book \cite{Simon} and the paper \cite{Allard1972} are standard sources
              cited when
              varifolds and
              related regularity results
              are of concern.
       Varifolds are generalized (non-oriented) surfaces
       and admit compactness properties suitable to approach
       the problem of existence of surfaces with minimal area.

      On p.~243, L.~Simon recalls the definition of tangent varifolds.
      He proves that if $C$ is a tangent varifold
      (and if some natural conditions are satisfied),
      then $\mu_C$ is conical,
 where
 $\mu_C$
 denotes
 the measure in $\R^n$ associated with $C$ by the direction-forgetting
 projection
 $G_m(
                 \NewBrefsugSmall
         \R^n
                 \NewErefsugSmall
 ) \to \R^n$.
      He says that it seems to be an open question
      whether $C$ itself has to be conical.

 Likewise, W.\,K.~Allard \cite[p.~459--460]{Allard1972}
 states
 that all $C\in \VarTan_a V$ are conical (under some conditions
 \NewBsmall
 on densities of $V$ and $\vari V$)
 \NewEsmall
 and then he says he knows of no varifold
 \NewBsmall
 (with a weak condition on the densities of $V$ and $\vari V$ at $a$)
 \NewEsmall
 such that
 $\VarTan_a V$ has more than one element.
 \NewBsmall
 We already noted that examples of varifolds with properties specified by Allard were provided by \cite{HM}
 (non-stationary, which is not natural in context of \cite{Allard1972})
 and \cite{Kiselman} (non-rectifiable).
 \NewEsmall

  The result that we prove in this paper is the following (see
  Theorem~\ref{thm:nonconical} and Theorem~\ref{thm:conical}).

\begin{theorem}\label{thm:main-v-uvodu}
   There exists a stationary rectifiable $2$-varifold in $\R^4$ that has a non-conical (hence non-unique) tangent at a point.
   There exists a stationary rectifiable $2$-varifold in $\R^4$ that has a conical but non-unique tangent at a point.
   (The varifolds have a positive and finite $k$-dimensional density at the point.)
\end{theorem}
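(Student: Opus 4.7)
The strategy is to build $V$ on $\R^4\setminus\{0\}$ by patching, across the dyadic annuli $A_j=\{2^{-j-1}\le|x|\le 2^{-j}\}$, truncations of members $V_{\theta_j}$ of a $1$-parameter family $\{V_\theta\}_{\theta}$ of stationary cones in $\R^4$. A natural family comes from identifying $\R^4\cong\mathbb C^2$: fix a stationary union $V_0=\sum_i m_i[P_i]$ of complex $2$-planes through $0$ and set $V_\theta=R_\theta V_0$, where $R_\theta(z_1,z_2)=(e^{i\theta}z_1,z_2)$ is a one-parameter subgroup of $SO(4)$ that permutes generic complex lines through $0$ while fixing every centered sphere setwise. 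Because each constituent plane is minimal, on the interior of each $A_j$ the truncated varifold has vanishing first variation; the only first-variation contribution comes from the interface spheres $S_j=\{|x|=2^{-j}\}$, where the truncated planes of $V_{\theta_{j-1}}$ and $V_{\theta_j}$ end along distinct families of circles and produce a Plateau-type nodal imbalance.

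The angle sequence $(\theta_j)$ then dictates the tangent varifolds: the rescaling of $V$ at scale $2^{-n}$ equals, on each reference annulus $A_j$, the cone $V_{\theta_{n+j}}$ truncated to $A_j$ up to interface terms, so subsequential tangent limits are controlled by subsequential limits of the shifted sequences $(\theta_{n_k+j})_{j}$. For the non-conical example I would take $\theta_n\equiv\gamma n\pmod{2\pi}$ with $\gamma/(2\pi)$ irrational: along any subsequence $n_k$ with $\theta_{n_k}\to\theta^*$ the shifted sequences converge to the arithmetic progression $(\theta^*+\gamma j)_j$ modulo $2\pi$, so the tangent $\sum_j V_{\theta^*+\gamma j}|_{A_j}$ differs from one reference annulus to the next and is therefore not invariant under dilation. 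For the example with two distinct conical tangents I would take $\theta_n$ constant on longer and longer dyadic blocks that alternate between two values $\theta'$ and $\theta''$; rescalings centered inside these blocks converge on any bounded range of $j$ to the cones $V_{\theta'}$ and $V_{\theta''}$ respectively.

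Stationarity is restored on each $S_j$ by inserting auxiliary $2$-dimensional connector pieces---small ruled bands or minimal patches contained in thin collars of $S_j$---that link the terminating boundary circles of $V_{\theta_{j-1}}$ to the starting boundary circles of $V_{\theta_j}$ with multiplicities and orientations chosen so that the outward-tangent vector sum is zero along every integral curve of the mismatch. Each connector can be made with mass $O(4^{-j})$, which keeps $V$ rectifiable and yields a positive, finite $2$-density at $0$; after rescaling, the connector supports localize on isolated spheres with uniformly bounded mass and along the subsequences described above they fit with $\sum_j V_{\theta^*_j}|_{A_j}$ to form stationary rectifiable tangent varifolds of the claimed form.

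The main obstacle is the connector library: one must exhibit an elementary geometric construction in $\R^4$ of stationary $2$-dimensional pieces that realize any prescribed small nodal imbalance along a circle in $S^3\subset\R^4$, and prove that the glued varifold has \emph{exactly} vanishing first variation rather than merely small first variation. A secondary technical point is that for the irrational-rotation choice the angle jumps $|\theta_{j-1}-\theta_j|$ do not tend to zero, so the connectors on infinitely many spheres carry non-negligible mass; the family $\{V_\theta\}$ and the connector design must be arranged so that this is consistent with the finite $2$-density at $0$ and with subsequential convergence of the rescalings. Once these ingredients are in place, the two prescriptions of $(\theta_n)$ above yield the two stationary rectifiable varifolds asserted in the theorem.
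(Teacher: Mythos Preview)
Your outline identifies a plausible architecture, but the proof is not complete: the entire difficulty has been pushed into the ``connector library,'' which you explicitly leave unconstructed. Producing, for each interface sphere $S_j$, a rectifiable $2$-varifold whose first variation exactly cancels the co-normal contributions of two \emph{distinct} families of boundary circles (coming from $V_{\theta_{j-1}}$ and $V_{\theta_j}$) is not a routine step. You would need these pieces to be genuinely stationary away from their own boundaries, to have boundaries matching the prescribed circles with the prescribed co-normals, and to carry mass $O(4^{-j})$; for the irrational-rotation choice the angle jump $|\theta_{j-1}-\theta_j|$ stays bounded away from zero, so the connectors are not perturbative and there is no obvious candidate (ruled bands over a circle in $S^3$ are not minimal in general). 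Without an explicit construction here there is no proof of exact stationarity, and an approximate cancellation would not suffice.

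The paper sidesteps the connector problem entirely by a different mechanism. It works inside the cone over the Clifford torus, parameterized by $F((a,b),(c,d))=(ac,bc,ad,bd)$, and observes that this $3$-cone is foliated by $2$-planes in two distinct ways: fixing $(a,b)\in S^1$ gives one family, fixing $(c,d)\in S^1$ gives another. Averaging over the circle parameter yields two \emph{different} (non-rectifiable) stationary $2$-varifolds $V_{1,r,s}$, $V_{2,r,s}$ on each annulus that nonetheless have \emph{identical} first variation, so they can be alternated across concentric spheres with no connectors at all. For the rectifiable version the paper replaces the averaged planes by explicit minimal ``rings'' $U^{d,\alpha_0}$ with profile $r(\alpha)=\sqrt{d/\cos 2(\alpha-\alpha_0)}$, verifies $\mathbf H\equiv 0$ by direct computation, and arranges them into branching layers whose boundary first variations are computed exactly and shown to telescope. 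The choice between non-conical and two-conical tangents is then made, as in your sketch, by choosing the radii $r_i$ geometrically ($r_i=2^i$) or super-geometrically ($R_1^{(n)}=2^{-n^3}$).

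So the high-level picture you drew---alternate between members of a family of stationary objects across annuli, then read off tangents from the indexing sequence---matches the paper, but the decisive idea you are missing is the existence of two plane-foliations of a single $3$-cone whose annular first variations coincide. That is what makes exact stationarity achievable without an ad hoc connector construction.
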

   Note that there is no such varifold $V$ with non-conical tangent
   and $\theta^2(\mu_V,\cdot)$ bounded away from zero
   on $\spt \mu _V$, as the following results imply.
\begin{lemma}\label{l:abouttangents}
    Let $V$ be a stationary $m$-varifold on an open set $\Omega\subset\R^n$,
    $x_0\in \Omega$,
    $C\in \VarTan_{x_0} V$
    and
    $C\neq 0$.\relax
   \footnote
   {Since $C\neq 0$,
    we have
    $\theta^m(\mu_V,x_0) \in (0,\infty)$
    from the Monotonicity formula for stationary varifolds,
    cf.~\cite[40.5]{Simon}.
    Therefore the assumptions of
    Corollary 42.6.
    (namely 42.1.)
    are satisfied.}

   If $\theta^m(C,x)>0$ for $\mu_C$-almost every $x$, then $C$ is conical and rectifiable.
   \textup(Stated on \cite[p.~243]{Simon}, proved in proof of \cite[Corollary 42.6]{Simon}.
       \NewBsmall
   Alternatively see \cite[5.2(2)(b)]{Allard1972} for conicity of $\mu_C$ and
   then
   the rectifiability theorem \cite[5.5(1)]{Allard1972}
   for how this determines the directions $C^{(x)}$ of\/ $C$.\relax
       \NewEsmall
   \textup)

\NewBsmall
   If $C$ is rectifiable, then
   (equivalently)
   $\theta^m(C,x)>0$ for $\mu_C$-almost every $x$
   and hence $C$ is again conical.
\NewEsmall

   If $\theta^m(\mu_V,\cdot)\ge c > 0$
   $\mu_V$-almost everywhere
   then $\theta^m(C,\cdot)\ge c >0$ $\mu_C$-almost everywhere and $C$ is conical
   \textup(and rectifiable\textup)
   \cite[proof of Corollary 42.6]{Simon},
       \NewBsmall
   \cite[6.5]{Allard1972}.
       \NewEsmall
\end{lemma}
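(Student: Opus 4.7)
The plan is to handle the three assertions separately. The first and third are essentially contained in \cite[proof of Corollary~42.6]{Simon} and I would use them as cited; only the middle assertion carries genuinely new content.

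For the first assertion I would invoke the argument in Simon's proof of Corollary~42.6: stationarity of $V$ is inherited by every tangent $C \in \VarTan_{x_0} V$, so the monotonicity formula applies to $C$, and at any $x$ with $\theta^m(\mu_C, x) > 0$ it forces the quotient $\omega_m^{-1} r^{-m} \mu_C(B_r(x))$ to be constant in $r$, yielding that $C$ is a cone from $x$ and is rectifiable. For the third assertion, each rescaling $V_j$ of $V$ along a sequence defining $C$ satisfies $\theta^m(\mu_{V_j}, \cdot) \ge c$; by monotonicity the quotient $\omega_m^{-1} r^{-m} \mu_{V_j}(B_r(x))$ is bounded below by $c$ for every $r > 0$, and weak varifold convergence together with $r \downarrow 0$ transports this bound to $C$, after which the first assertion finishes the proof.

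The only genuinely new step is the middle assertion, which I would prove directly. Writing $\mu_C = \theta \, \Hausd^m \mrest M$ with $M$ countably $m$-rectifiable and $\theta \fcolon M \to (0, \infty)$ locally $\Hausd^m$-integrable, the standard density theorem for rectifiable Radon measures (based on the existence of approximate tangent planes at $\Hausd^m$-a.e. point of $M$; cf.~\cite[Chapters~11 and 15]{Simon}) gives $\theta^m(\mu_C, x) = \theta(x)$ at $\Hausd^m$-a.e., hence $\mu_C$-a.e., $x \in M$. Since $\theta > 0$ on $M$ and $\mu_C$ is supported on $M$, the required positive-density property holds $\mu_C$-a.e., and conicality then follows from the first assertion.

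There is no real obstacle: the nontrivial geometric work is already inside Simon's proof of Corollary~42.6, and the density computation for rectifiable measures is classical. The one subtle point worth stressing is that in the third assertion weak varifold convergence alone does not preserve pointwise density lower bounds -- it is precisely stationarity, via the monotonicity formula, that supplies them; this is why the same argument cannot be run under the density hypotheses of the constructions carried out later in the paper.
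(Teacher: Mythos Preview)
The paper does not supply a separate proof of this lemma; the citations to \cite{Simon} embedded in the statement itself constitute the entire argument for the first and third assertions, and the second is left implicit. Your plan---cite Simon for parts one and three, prove part two directly via the density theorem for rectifiable measures---matches the paper's treatment, and your proof of the second assertion is correct.

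However, your sketch of Simon's argument for the first assertion contains a genuine logical error. You claim that at any $x$ with $\theta^m(\mu_C,x)>0$ the monotonicity formula ``forces the quotient $\omega_m^{-1} r^{-m} \mu_C(B_r(x))$ to be constant in $r$, yielding that $C$ is a cone from $x$''. Monotonicity only says this quotient is nondecreasing; positivity of the limit as $r\to 0$ in no way forces constancy, and $C$ is certainly not a cone with vertex at a generic point $x$. The actual route in \cite[proof of Corollary~42.6]{Simon} is different: positive density $\mu_C$-a.e.\ together with stationarity of $C$ gives rectifiability of $C$ via Allard's rectifiability theorem; independently, \cite[42.2]{Simon} (quoted in the paper) already shows that the weight $\mu_C$ is conical; and since a rectifiable varifold is determined by its weight measure through the approximate tangent planes (cf.~\eqref{eq:rectif}), conicality of $\mu_C$ forces conicality of $C$ itself. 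Your remarks on the third assertion and on the role of monotonicity in transporting the lower density bound are fine.
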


 Further note that if $C$ is a tangent varifold from our example, then
 $\mu_C$
 must be
 conical \cite[42.2 on p.~243]{Simon}.

 For stationary $1$-varifolds, the tangent varifolds $C$ are always conical since
 $\mu_C$ is conical and $x \in S$ (equivalently, $p_{S^ \bot} (x) = 0$)
 for all $(x,S) \in \spt C$ \cite[p.~243, l.~2--3]{Simon}.
 \NewBsmall
  For stationary $1$-varifolds with density bounded away from zero,
  the tangent varifolds are conical and unique \cite{AllAlm1976}.
 \NewEsmall

 \NewBsmall
  In Remark on page 449, \cite{Allard1972}
  relates conicity of stationary varifolds to the constancy of its ``sphere slices $B^{(r)}$'' that are implicitly defined by \cite[Theorem~5.2(3)]{Allard1972}.
  Namely, he writes: There is $C$ as in \cite[Theorem 5.2(2), p.~446]{Allard1972}
  (i.e., $C$ a stationary $k$-varifold, with the density $\theta^k(C,0) \ge \mu_C(B_1(\R^n))$)
  which is not homothetically invariant (conical)
  {\em \ if and only if\ }
  there is B as in [Theorem 5.2(3), p.~448]
  (i.e., for almost every $r>0$,
  the slice
  $B^{(r)}$ is a $(k-1)$-varifold in $S_1(\R^n)$,
  which is `stationary in the manifold $S_1(\R^n)$' if $k\ge 2$ resp.\ balanced if $k=1$, with $r\mapsto\mu_{B^{(r)}}$ almost constant and $r\mapsto B^{(r)}$ measurable)
  which is not almost constant.
  The simpler non-rectifiable version of our
  examples
  (see Section~\ref{s:non-rect})
  shows that both statements are (unfortunately) true.
  Note that the example of Kiselman \cite{Kiselman}, and our rectifiable
  example (Sections \ref{s:rect} and \ref{s:variants})
  are
  not applicable to these statements
  (the monotonicity ratio and
  corresponding function $r\mapsto \mu_{B^{(r)}}$ are far from constant).
 \NewEsmall

\section{Notation and definitions}\label{s:xxxxxx}

For $0\le r < s \le \infty$,
denote by $S_r(\R^n)$ the sphere of radius $r$ in $\R^n$
and
$A_r^s=A_r^s(\R^n)=\{x\in \R^n\setcolon  r \le \|x\| \le s \}$ the annulus (or shell) in $\R^n$.
Let $\Sone=S_1(\R^2)$.

\smallbreak

$X$ denotes a smooth compactly supported vector field on $\R^n$ (or on $\Omega\subset \R^n$).

\smallbreak

If $\nu$ is a measure and $M$ is $\nu$-measurable then
$\nu \mrest M$ denotes the restriction of $\nu$ to $M$: $(\nu \mrest M) (A)= \nu (M\cap A)$.

\medbreak

$\phi_{\ImageMeasureFM} \mu$ denotes the image measure \cite[2.1.2]{Federer}:
\begin{equation}\label{eq:image-measure}
              \phi_\ImageMeasureFM \mu (A) = \mu( \phi^{-1} (A) )
       .
\end{equation}
     If $V$ is a $k$-varifold in $\R^n$ (i.e., a measure on $G_k(\R^n)$, see Section~\ref{s:varifolds}),
     then
     we write
     $\phi_\ImageMeasureFV V$
     for
     the
     image measure (if $\dom \phi \subset G_k(\R^n)$)
     defined by \eqref{eq:image-measure}
     and
     \[
     \phi_\ImageVarifold V
     \]
     for
     the
     image varifold (assuming $\dom \phi \subset \R^n$;
     see Section~\ref{s:tan-and-conical}).
     The standard notation for both is the same ($\phi_\# V$)
     which would cause difficulties when reading some expressions in this paper.

\subsection{Varifolds}\label{s:varifolds}
To recall basic notions we
follow and extend \cite[p.~4--5, \S Varifolds]{GMTONeil}.
More details can be found in \cite{Allard1972} and \cite{Simon}.
An {\em $m$-varifold} $V$ on an open subset $\Omega\subset \R^n$
is a Radon measure on
\[
        G_m(\Omega) := \Omega \times G(n,m)
    .
\]
($G(n,m)$ denotes the {\em Grassmann manifold} consisting of $m$-dimensional linear
subspaces of $\R^n$.)
The space of $m$-varifolds is equipped with the {\em weak topology}
given by saying that $V_i \to V$ if and only if
$\int f \intd V \to \int f \intd V$ for all compactly
supported, continuous real-valued functions on $G_m(\Omega)$.
Varifolds can be combined using the addition
which is addition of measures ($ (c_1 V_1 + c_2 V_2) (B) = c_1 V_1(B) + c_2 V_2(B) $).
A countable sum of varifolds is also a varifold, provided it is a Radon measure,
i.e., it assign finite values to compact sets.

To a given $m$-varifold $V$, we associate a Radon measure $ \mu_V $
on $\Omega$ by setting $ \mu _V (A) = V ( G_m(A) )$ for
$A\subset \Omega$.
$\mu_V$ is called the {\em weight} of $V$ (\cite[p.~229]{Simon}).
As a partial converse,
to a (Radon) $m$-rectifiable measure $ \mu $
(see~\cite{GMTONeil})
we can associate an {\em $m$-rectifiable varifold} $V=V_\mu$ by defining
\begin{equation}\label{eq:rectif}
   V(B) =  \mu  \{ x \setcolon (x,T_x) \in B \},
  \qquad
   B\subset  G_m(\Omega)
\end{equation}
where $T_x$ is the approximate tangent plane at $x$.\relax
\footnote{\relax
Although there are several possible definition of approximate tangent plane
(see~\cite{GMTONeil}, \cite[p.~428, (3) and (b)]{Allard1972} and \cite[11.2]{Simon}),
they agree $\mu$-almost everywhere.
The definitions of rectifiable varifolds in \cite{Allard1972} and \cite{GMTONeil}
essentially agree with that of \cite{Simon}, cf.\ footnote on \cite[p.~77]{Simon}.
}
If a countable sum of rectifiable varifolds is also a varifold then it is rectifiable.
In this paper we need only
the following particular case of rectifiable varifolds
(and their countable sums):
$ V = V _ { c \cdot \Hausd^m \mrest S} $
where
$S = \range(U)$
is a smooth parameterized surface
and $c \in (0,\infty)$.
Then the approximate tangent plane $T_{U(x)}$
agrees ($\mu_V$-almost everywhere) with the classical tangent
$ \Span \{ \partial U / \partial x^1, \dots, \partial U / \partial x^m \}$
to $S$,
and $V$ is exactly $c \cdot \mathbf v (S)$ from \cite[p.~431]{Allard1972}.

The support of a measure $\mu$ is denoted by $\spt \mu$.
Note that if $V$ is an $m$-varifold in $\Omega \subset \R^n$
then $\spt V \subset G_m (\R^n)$ while
$\spt \mu_V \subset \R^n$.
If $V$ is an $m$-varifold (hence also a measure)
and we say that $V$ is {\em supported} by a set $M$
if
$M\subset \R^n$
and $\mu_V(\R^n \setminus M) = 0$
or
$M\subset G_m(\R^n)$
and $V(G_m(R^n) \setminus M) = 0$.
                 If $V$ is an $m$-varifold on $\Omega \subset \R^n$ and $M\subset\Omega$ then
                 $V \mrest G_m(M)$ might be called
                 the {\em restriction} of $V$ to $M$.

\medbreak
     The {\em density} that we use in Introduction is defined as
     \[
        \theta^k ( \mu , x) =  \lim_ {r\to 0+} \mu(
        \NewBsmall
        x
        \NewEsmall
        + A_0^r) / r ^ k
     \]
     for a measure $\mu$ on $\R^n$,
     and by $  \theta^k ( V , x) =  \theta^k ( \mu_V , x) $
     for a varifold $V$.

\subsection{The first variation. Stationary varifolds. The mass. The curvature.}\label{s:variation}
The {\em first variation} of an $m$-varifold $V$ is a map from the space of smooth compactly supported vector fields
on $\Omega$ to $\R$ defined by
(see \cite[p.~434]{Allard1972} and \cite[p.~234, p.~51]{Simon})
\begin{equation}\label{eq:var-def}
    \vari V (X) = \int_\Omega  \diver_S X(x) \intd V(x,S)
\end{equation}
where $\diver_S X(x)$ is the divergence at $x$ of the field $X$ restricted (and projected) to affine subspace $x+S$
(\cite[p.~234]{Simon}).
The idea is that the variation measures the rate of change in the 'size' (mass) of the varifold if it is perturbed slightly
(see the alternate formula in \cite[p.~233]{Simon}).
The {\em mass} of the varifold (see \cite[p.~229]{Simon}) is given by
\[
   \mass(V) = V( G_m(\Omega) ) = \mu_V(\Omega)
   .
\]

\medbreak

If $\vari V =0$, then the varifold is said to be {\em stationary}.
Varifold $V_{\Hausd^m \mrest S} $ associated to an $m$-dimensional affine plane $S$
in $\R^n$ is stationary.

\medbreak

Assume
$ V = V _ {\Hausd^m \mrest S} $
is the rectifiable
varifold associated to Hausdorff measure restricted to a smooth surface $S\subset \R^n$
such that the closure $\overline S$ is a $C^2$-smooth compact manifold with
        smooth $(m-1)$-dimensional boundary
      $\partial S := \overline M \setminus M$.
Then \eqref{eq:var-def} reads
\begin{equation}\label{eq:var-smooth}
    \vari V (X) = \int _ S  \diver_{T_x} X(x) \intd \Hausd^m(x)
\end{equation}
and can be (see \cite[7.6]{Simon}) computed as
\begin{equation}\label{eq:var-smooth2}
    \vari V (X)
      =
       - \int _ S    X \cdot \mathbf H  \intd \Hausd^m
       - \int _ {\partial S}  X \cdot \eta \intd \Hausd^{m-1}
\end{equation}
where $\eta$ is the inward pointing unit co-normal of $\partial S$, cf.~\cite[p.~43]{Simon},
and
$\mathbf H$ is the mean curvature vector (\cite[7.4]{Simon}).
If
$U$ is a parameterization of $S$ and
$\mathcal B(x):=\{\partial U/\partial x^1, \dots,
\JKXallowbreak
\partial U/\partial x^m\}$
happens to be orthonormal at $x$
then $\mathbf H$ can be obtained (cf.\ 7.4 together with the last line on p.~44 of \cite{Simon}) as
\begin{equation*}\label{eq:H_unused}
  \mathbf H(U(x)) = \sum_{i=1}^m
              \left(
                     \frac{\partial^2 U(x)}{(\partial x^i)^2}
              \right)
                ^ \bot
\end{equation*}
where $v ^ \bot$ denotes orthogonal projection of $v$ to the orthogonal complement of
$T_{U(x)} = \Span \mathcal B (x)$.
If $\mathcal B(x)$ is merely orthogonal at $x$, a linear change of variables
$ \tilde x_i 
  =
  \sqrt{g^{ii}} x_i = \frac{1}{\|\partial U/\partial x^i\|} x_i$
reveals that
\begin{equation}\label{eq:H}
  \mathbf H(U(x)) = \sum_{i=1}^m
              \left(
                    g^{ii}
                     \frac{\partial^2 U(x)}{(\partial x^i)^2}
              \right)
                ^ \bot
                .
\end{equation}
We skip further derivations
and note for the sake of completeness
that \eqref{eq:H} is in accordance
with  the general
formula
\begin{equation}\label{eq:minimal}
  \mathbf H(U(x))
  =
   \left(
    \smash { \sum_{ij} }
      g^{ij} 
      \frac{\partial^2 U}{\partial x^i \partial x^j}
   \right)
   ^\bot
\end{equation}
where $(g^{ij})$ is the inverse to the metric tensor $(g_{ij})$
of $U$
(see \cite[(1.11), p.~1098]{Oss1969}).

\subsection{An example}\label{s:example}
\begin{exercise}\label{ex:ex}
 Let $H$ be a hyperplane dividing $\R^3$ into two half-spaces $H_1$,
 $H_2$. Let $S_1$, $S_2$ be $2$-dimensional subspaces orthogonal  to
 $H$. For $i=1,2$, let
 $V_i= (\Lebesgue^3\mrest H_i) \times \delta_{S_i}$
 (where $\delta_{S_i}$ is the Dirac measure at the point $S_i\in G(3,2)$)
 and $V=V_1+V_2$.
 Show that $V$ is a stationary varifold on $\R^3$.
\end{exercise}
\begin{solution*}
 From \eqref{eq:var-def} and the divergence theorem
 we have $\vari V_i(X) = - \int_H x\cdot \eta_i \intd \Hausd^2$
 where
 $\eta_i$ is the inward pointing unit normal to $H_i$.
 \qed
\end{solution*}
\begin{interpretation}
    $V_i$ is the integral
    (or,   %
    uncountable ``linear combination'')
    of varifolds $V_{i,x} = V _ {\Hausd^2 \mrest (S_i + x)}$, $x\in S_i ^ \bot$.
    The variations $\vari V_{i,x}$ combine in the same way, and it turns out
    that the result is exactly opposite for $V_1$ and $V_2$.
\end{interpretation}
\begin{remark}
 The varifold $V$
 \NewBsmall
 from Exercise~\ref{ex:ex}
 \NewEsmall
 is a 2-varifold supported by the 3-space
 ($\mu_V =\Lebesgue^3$, $\spt \mu_V = \R^3$);
 $V$ is non-rectifiable.
 $V$ can be ``approximated'' by a rectifiable varifold supported by many half-planes
 touching $H$ and parallel to $S_1$ (inside $H_1$) or $S_2$ (inside $H_2$).
 (The more half-planes, the better approximation and the less density on each of them.)
 This varifold cannot be stationary --- the failure is located near $H$.
 There is a better ``approximation'' that is rectifiable and stationary,
 which is supported by strips of plane creating structure that branches and refines towards $H$.
 \NewBsmall
 See Figure~\ref{fig:network}c) for a planar network of segments illustrating such a branching.
 \NewEsmall
  \qqeedd
\end{remark}
\begin{remark}\label{rem:approximation}
 Also the $2$-varifold
 $
 \tilde V:=
 (\Lebesgue^3\mrest M_1) \times \delta_{S_1}
 +(\Lebesgue^3\mrest M_2) \times \delta_{S_2}$
 is stationary when
 $M_1=\bigcup_{k\in \Z} [2k-1, 2k] \times \R \times \R$,
 $M_2=\bigcup_{k\in \Z} [2k, 2k+1] \times \R \times \R$,
 $S_1=\R\times\R\times\{0\}$,
 $S_2=\R\times\{0\}\times\R$.
 \NewBsmall
 $\tilde V$ can be again approximated by a stationary and rectifiable varifold
 by using the idea from Figure~\ref{fig:network}c)
  twice
 inside
 each $[k, k+1]  \times \R \times \R$.
 \NewEsmall
  \qqeedd
\end{remark}

\subsection{Tangents. Conical varifolds}\label{s:tan-and-conical}
 For $x\in \R^n$ and $\lambda > 0$, let
 \begin{equation}\label{eq:blowup}
     \eta_{x,\lambda} (y) = \frac { y - x }{ \lambda },
     \qquad
     y\in \R^n
     .
 \end{equation}

 If $V$ and $C$ are $m$-varifolds on $\R^n$ and $x\in \R^n$,
 we say that $C$ is a {\em tangent varifold} to $V$ at $x$,
 $C \in \VarTan_x V$,
 if there exist $ \lambda_i>0 $, $\lambda_i \to 0$
 such that, for every
 continuous function $f$ on $G_m(\R^n)$ with compact support,
\[
      \int f(y,S)
                                     \intd C(y,S)
   =
     \lim _ { i \to \infty }
      (\lambda_i) ^ { -m }
      \int f(\eta_{x,\lambda_i}(y),S)
                                     \intd V(y,S)
  .
\]

 This is equivalent to
\[
   \eta_{x, \lambda_i\ \ImageVarifold } V \to C
\]
 (weakly), which is the definition used in \cite[p.~242-243]{Simon}.

 The general definition of ${}_\ImageVarifold$ for varifolds is
 (denoted differently by ${}_\#$)
 in \cite[p.~233]{Simon}
 and it is slightly complicated.
 We need ${}_\ImageVarifold$ only (i) with maps that are combination of translation
 and homothety, like \eqref{eq:blowup}, in which case
\[
     \eta_{x, \lambda_i\ \ImageVarifold } V
     (A)
     =
     (\lambda_i) ^ { -m }
     V( \{ (y,S) \setcolon ( \eta_{x, \lambda_i}(y), S ) \in A \} )
     ;
\]
(ii) with orthonormal linear maps $L$, with
\[
     L_{\ \ImageVarifold } V
     (A)
     =
     V( \{ (y,S) \setcolon ( L(y), L(S) ) \in A \} )
     .
\]

\bigbreak

  An $m$-varifold $C$ is {\em conical} if
\[
     \eta_{0, \lambda\ \ImageVarifold } C = C
\]
  for every $\lambda >0$.

\section{The non-rectifiable varifold}\label{s:non-rect}%

 We start with
 an
 example of a non-rectifiable varifold, which is simpler.
 The rectifiable varifold in later sections is in fact a suitable rectifiable approximation
 of this non-rectifiable example.
 Thus, in this section we prove the following
 weaker version of Theorem~\ref{thm:main-v-uvodu}.
\begin{proposition}\label{prop:nonrectif}
   There is a $2$-varifold in $\R^4$ that has a non-conical (hence non-unique) tangent at a point.
   There is a $2$-varifold in $\R^4$ that has a conical but non-unique tangent at a point.
\end{proposition}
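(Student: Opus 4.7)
The plan is to use the equivalence quoted in the paragraph preceding the proposition from Allard's Remark: a stationary $2$-varifold in $\R^4$ (satisfying $\theta^2(V,0)\ge\mu_V(B_1(\R^4))$) corresponds bijectively to a measurable family $\{B(r)\}_{r>0}$ of stationary $1$-varifolds on $S^3$ with $r\mapsto\mu_{B(r)}$ almost constant, and $V$ is non-conical iff $r\mapsto B(r)$ is non-constant. I would exhibit such a family directly and let $V$ be the corresponding varifold.

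Fix a stationary $1$-varifold $B_0$ on $S^3$ (a great circle, oriented tangentially) and a measurable family $\{R_r\}_{r>0}\subset SO(4)$. Put $B(r):=R_rB_0$. Each $B(r)$ is again a great circle, hence stationary on $S^3$, with constant mass $\mu_{B(r)}(S^3)=2\pi$, so Allard's hypotheses are satisfied and $V$ exists; it is the ``twisted cone'' over the family, supported on $\{rR_r\omega\setcolon r>0,\ \omega\in\spt B_0\}$ with tangent $2$-plane spanned, at each point, by the radial direction and the tangent direction to the rotated circle.

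The main obstacle is making the reconstruction precise and verifying stationarity directly. I would either cite \cite[Theorem 5.2 and Remark]{Allard1972} for the equivalence, or compute $\vari V$ via~\eqref{eq:var-def} using a polar decomposition $\R^4\setminus\{0\}=(0,\infty)\times S^3$, splitting $\diver_{T_{r\omega}V}X$ into a radial component and a spherical component. The spherical part integrates to zero on each sphere because $B(r)$ is stationary on $S_r$; the radial part assembles into a total derivative in $r$ that vanishes upon integration because $r\mapsto\mu_{B(r)}$ is constant. Some care is required with how $R_r'$ enters the tangent plane when $r\mapsto R_r$ is differentiable, but it cancels in the end because rotations are isometries.

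Finally the tangent analysis. Since $\eta_{0,\lambda\,\ImageVarifold}V$ has sphere slice at $s$ equal to $B(\lambda s)=R_{\lambda s}B_0$, a blow-up amounts to replacing $R_\cdot$ by $R_{\lambda\cdot}$. For the \emph{non-conical} case take $R_r$ continuous with non-trivial $\log r$-dependence, e.g.\ $R_r$ a rotation by angle $\log(1/r)$ in $\Span(e_2,e_3)$; along any subsequence $\lambda_j\to 0$ for which $\log(1/\lambda_j)\to\phi\pmod{2\pi}$, the tangent has sphere slice at $s$ equal to the $(\phi+\log(1/s))$-rotation of $B_0$, which depends non-trivially on $s$ (hence is not conical), and distinct $\phi$'s give distinct tangents. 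For the \emph{conical but non-unique} case take $R_r$ piecewise constant on macro-intervals $[\rho_{j+1},\rho_j)$ with $\rho_{j+1}/\rho_j\to 0$ and values $R^{(j)}$ having two accumulation points $R^A,R^B$; for $\lambda_j=\sqrt{\rho_j\rho_{j+1}}$ (deep inside the $j$-th macro-interval) every fixed $s\in(0,\infty)$ eventually satisfies $\lambda_j s\in[\rho_{j+1},\rho_j)$, so the rescaled sphere slice is $R^{(j)}B_0$, constant in $s$; the tangent is the cone over $R^{(j)}B_0$, conical, and the two accumulation points yield two distinct conical tangents.
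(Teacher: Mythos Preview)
There is a genuine gap. Allard's hypothesis in Theorem~5.2(3) is that $r\mapsto\mu_{B(r)}$ be almost constant \emph{as a measure on $S^3$}, not merely that the total mass $\mu_{B(r)}(S^3)$ stay equal to $2\pi$. On the varifold side this is precisely the statement that $\mu_V$ is conical, which is forced: the density condition $\theta^2(V,0)\ge\mu_V(B_1)$ together with monotonicity gives equality, and equality in the monotonicity formula makes $\mu_V$ a cone. Your family $B(r)=R_rB_0$ has weight $(R_r)_{\#}\mu_{B_0}$, arc-length on a \emph{moving} great circle; this is constant only if $R_r$ fixes $B_0$ setwise, and then $B(r)\equiv B_0$ because a rectifiable $1$-varifold is determined by its weight. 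So no rectifiable choice of $B(r)$ can satisfy Allard's hypotheses while being non-constant, and the reconstruction collapses to the flat plane through $B_0$. The piecewise-constant case fails even more visibly: the inner and outer boundary circles of adjacent annuli lie on different great circles, so the boundary contributions to $\vari V$ cannot cancel.

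The direct verification does not rescue the construction either; the assertion that the $R_r'$ contribution ``cancels because rotations are isometries'' is false. With $B_0\subset\Span\{e_1,e_2\}$, $R_r$ the rotation by $\theta(r)$ in $\Span\{e_2,e_3\}$, tangent plane $R_rP_0$ at each point of radius $r$, and measure $r\,dr\,d\beta$, testing against $X=x_3\,\phi(|x|)\,e_2$ yields
\[
\vari V(X)=\pi\int_0^\infty\sin\theta\cos\theta\,(r^2\phi)'\,dr
=-\pi\int_0^\infty\theta'(r)\cos 2\theta(r)\,r^2\phi(r)\,dr,
\]
nonzero for generic $\theta,\phi$. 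What is actually needed---and what the paper does---is a family of \emph{non-rectifiable} stationary $1$-varifolds on $S^3$ sharing a common weight but differing in their Grassmannian data. The paper realizes this via two distinct foliations of the cone over the Clifford torus by $2$-planes (fix $(a,b)$ versus fix $(c,d)$ in~\eqref{eq:F-exchange}); the resulting varifolds $V_{1,r,s}$ and $V_{2,r,s}$ have identical weight and identical first variation, so interleaving them on concentric annuli produces a stationary $V$ whose sphere slices alternate between two fixed values.
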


\begin{proof}
 The varifold will be supported by the three-dimensional
 surface,\footnote{\relax
         The surface is neither a linear space nor a convex set: it contains points
         $(1,0,0,0)$ ($a=c=1$, $b=d=0$)
         and
         $(0,0,0,1)$ ($a=c=0$, $b=d=1$)
         but does not contain $(1/2,0,0,1/2)$.
         Indeed, $(t,0,0,t)=(ac,bc,ad,bd)$, $t\neq 0$ leads to
         $a\neq 0$, $c=t/a$, $b\neq 0$, $d=t/b$,
         then $bt/a=0$, $at/b=0$ and finally $b=0=a$, a contradiction.
         }
 in $\R^4$,
 \NewNewBsmall
 for which we propose the name {\em Clifford cone},\relax
 \NewNewEsmall
         \footnote{\relax
         The surface is actually a copy of the three-dimensional cone
         generated by $\Sone\times\Sone$ as can be seen from
         the relation
         $(\cos\gamma,\sin\gamma,\cos\delta,\sin\delta)=(x+w,y-z,x-w,z+y)$
         where $(x,y,z,w)=F((\cos \alpha,\sin\alpha),
         \JKXallowbreak
         (\cos\beta,\sin\beta))$,
         $\gamma=\alpha-\beta$,$\delta=\alpha+\beta$.
         \newline
         The surface was the first known nontrivial minimal cone in $\R^4$,
         \cite[p.~1113]{Oss1969}.
         $\Sone\times\Sone$ is so called Clifford torus.
         Recently, Simon Brendle announced that (up to a congruence)
         it is the only embedded minimal torus in $\mathbb S^3$ \cite{Brendle2012}.
         }
 parameterized by
 \begin{equation}\label{eq:F-def}
  F((a,b),(c,d))=ac\, e_1+ bc\, e_2 + ad\, e_3 + bd\, e_4.
 \end{equation}

 Then, for every $t>0$,
\begin{equation}\label{eq:F-radial}
 F((ta,tb),(c,d))=t F((a,b),(c,d))= F((a,b),(tc,td))
\end{equation}
and
\begin{equation}\label{eq:F-exchange}
  F((a,b),(c,d))
                = c(ae_1 + be_2) + d(ae_3 + be_4)
                = a(ce_1 + de_3) + b(ce_2 + de_4).
\end{equation}

Now, we are ready for an informal explanation of the idea.
The surface is the union of a parameterized family of two-dimensional linear subspaces.
In fact there is a pair of such representations that are ``orthogonal'':
We can fix $(a,b) \in \Sone$ as a parameter and use variables $(c,d) \in \R^2$
to create a $2$-dimensional
varifold
$ V_1^{(a,b)} := V _ {\Hausd^2 \mrest \Span \{ ae_1 + be_2,\ ae_3 + be_4 \} }$
(which is stationary because it is associated to
a 2-plane).
Then
we obtain a new (non-rectifiable) stationary varifold $V_1$ by averaging $V_1^{(a,b)}$
over all $(a,b)\in \Sone$.
We also do the same with swapped $(a,b)$ and $(c,d)$
to obtain a different stationary varifold $V_2$ (yet with $\mu_{V_1} = \mu_{V_2}$).
Suitable parts of the two varifolds can be joined together in similar way as in
Exercise~\ref{ex:ex}, with the
separating hyperplane $H$ replaced by a sphere.
The resulting varifold is again stationary; the quantitative aspects of the formal
proof of this fact depend on the presence of ``orthogonality'' of the parameterizations.
Moreover, we can interleave an infinite number of concentric shells containing
(parts of) $V_1$ and $V_2$ to obtain the target (non-rectifiable) varifold.
Now we proceed with the formal definitions, arguments and calculations.

Let $0\le r < s \le \infty$,
\begin{gather}
  \notag
 g_1((a,b),(c,d))=\Span\{ ae_1 + be_2 , ae_3 + be_4 \}, \\
  \notag
 g_2((a,b),(c,d))=\Span\{ ce_1 + de_3 , ce_2 + de_4 \},
\\
\noalign{\noindent($g_i$ does not depend on all its parameters),}
  \notag
\begin{aligned}
 \phi_{1,r,s} = (F,g_1) \colon S_1(\R^2) \times A_r^s(\R^2) &\to G_2(A_r^s(\R^4)),
 \\
                                 (a,b,c,d) &\mapsto (F((a,b),(c,d)), g_1((a,b),(c,d))),
 \\
  \notag
 \phi_{2,r,s} = (F,g_2) \colon A_r^s(\R^2) \times S_1(\R^2) &\to G_2(A_r^s(\R^4)),
 \\
                                 (a,b,c,d) &\mapsto (F((a,b),(c,d)), g_2((a,b),(c,d))),
\end{aligned}
\\[5pt]
\label{eq:V1def}
  V_{1,r,s} = \phi_{1,r,s}\, _\ImageMeasureFM ( \Hausd^1 \times \Lebesgue^2 ) , \\
\label{eq:V2def}
  V_{2,r,s} = \phi_{2,r,s}\, _\ImageMeasureFM ( \Lebesgue^2 \times \Hausd^1 ) ,
\end{gather}
where
${}_\ImageMeasureFM$ denotes
the image of a measure (the image is a measure that happens to be a varifold),
$\Hausd^1$ is the one-dimensional Hausdorff measure in the unit sphere $S_1(\R^2)$
and $\Lebesgue^2$ is the Lebesgue measure (on the annulus $A_r^s(\R^2) \subset \R^2$).
      From the definition of $m$-varifold we see that $V_{i,r,s}$ defined by
\eqref{eq:V1def}, \eqref{eq:V2def} are $2$-varifolds.
To see that $V_{i,r,s}$ can also be obtained by ``averaging'' (integrating, in the weak sense)
$2$-rectifiable varifolds,
let
\begin{gather}
  \notag
  \phi_{1,r,s,(a,b)}(c,d)=\phi_{1,r,s}((a,b),(c,d)),
  \qquad (c,d) \in A_r^s(\R^2),
  \\
  \notag
  \phi_{2,r,s,(c,d)}(a,b)=\phi_{2,r,s}((a,b),(c,d)),
  \qquad (a,b) \in A_r^s(\R^2),
\\[5pt]
  \label{eq:V1rsab}
  V_{1,r,s,(a,b)} := \phi_{1,r,s,(a,b)}\, _\ImageMeasureFM \Lebesgue^2
  \overset *
  =
  V_ {\Hausd^2 \mrest ( \Span \{ ae_1 + be_2 ,\, ae_3 + be_4 \} \cap A_r^s(\R^4) ) }
  \ ,
  \\
  \label{eq:V2rscd}
  V_{2,r,s,(c,d)} := \phi_{2,r,s,(c,d)}\, _\ImageMeasureFM \Lebesgue^2
  \overset *
  =
  V_ {\Hausd^2 \mrest ( \Span\{ ce_1 + de_3 ,\, ce_2 + de_4 \} \cap A_r^s(\R^4) ) }
  \ ,
\end{gather}
where ``$\overset * =$'' are valid under condition $(a,b) \in \Sone$ or
$(c,d) \in \Sone$, respectively.
Then, by the Fubini theorem,
\begin{gather}
\label{eq:V1eqidef}
  V_{1,r,s} = \int_{(a,b) \in \Sone}   V_{1,r,s,(a,b)}  \intd\Hausd^1, \\
\label{eq:V2eqidef}
  V_{2,r,s} = \int_{(c,d) \in \Sone}   V_{2,r,s,(c,d)}  \intd\Hausd^1.
\end{gather}
Since $V_{i,r,s,(\cdot,\cdot)}$ is just the varifold corresponding
to an annulus part of a $2$-plane
($\mathbf H=0$),
its first variation corresponds to the inward pointing unit co-normal field
supported on the
two circles (cf.~\eqref{eq:var-smooth2}):
\begin{gather*}
  \vari V_{1,r,s,(a,b)}(X) =
    \int\limits_ {\{F(a,b,c,d)\setcolon c^2+d^2=s^2\}} X\cdot N \intd\Hausd^1
  - \int\limits_ {\{F(a,b,c,d)\setcolon c^2+d^2=r^2\}} X\cdot N \intd\Hausd^1,
              \\
  \vari V_{2,r,s,(c,d)}(X) =
     \int\limits_ {\{F(a,b,c,d)\setcolon a^2+b^2=s^2\}} X\cdot N \intd\Hausd^1
  -  \int\limits_ {\{F(a,b,c,d)\setcolon a^2+b^2=r^2\}} X\cdot N \intd\Hausd^1
\end{gather*}
where $N(x)=x/\|x\|$
and where we leave out the first term if $s=\infty$.
The second term is zero if $r=0$.
Integrating over $(a,b)\in S_1(\R^2)$, respective over $(c,d)\in S_1(\R^2)$,
and changing the variables back to the image of $F$ (where it becomes a circle of radius $s$ or $r$),
we get

\begin{gather*}
 \vari V_{1,r,s} (X)=
        \int\limits
         _{F(S_1(\R^2)\times S_s(\R^2))}  X\cdot N
                      \ \ \intd\ \frac{\Hausd^1}{s} \times \Hausd^1
       - \int\limits
         _{F(S_1(\R^2)\times S_r(\R^2))}  X\cdot N
                      \ \ \intd\ \frac{\Hausd^1}{r} \times \Hausd^1
     \\
 \vari V_{2,r,s} (X)=
        \int\limits
         _{F(S_s(\R^2)\times S_1(\R^2))}  X \cdot N
                     \ \ \intd\ \Hausd^1 \times \frac{\Hausd^1}{s}
       - \int\limits
         _{F(S_r(\R^2)\times S_1(\R^2))}  X \cdot N
                     \ \ \intd\ \Hausd^1 \times \frac{\Hausd^1}{r}
\end{gather*}
(Again, if $s=\infty$ or $r=0$, the first or second term has to be replaced by
zero. In particular, $V_{i,0,\infty}$ are stationary.)
Therefore $V_{1,r,s}$ and $V_{2,r,s}$ have the same first variation,
   $\vari V_{1,r,s}  = \vari V_{2,r,s} $,
   and (cf.~\eqref{eq:var-def})
\begin{equation}\label{eq:same-vari}
    \int_\Omega  \diver_S X(x) \intd V_{1,r,s}(x,S)
    =
    \int_\Omega  \diver_S X(x) \intd V_{2,r,s}(x,S)
   .
\end{equation}
We show that
\begin{equation}\label{eq:Vseq}
 V
 =
 V _ {\{r_i\} _ {i\in \Z}}
 =
 \sum_{i=-\infty}^{\infty}
   \left(
            V_{1, r_{2i}, r_{2i+1}}+V_{2, r_{2i+1}, r_{2i+2}}
   \right)
\end{equation}
 is a stationary varifold for any increasing sequence $\{r_i\}_{i\in \Z}$ with
 $\lim_{i\to \infty} r_i = \infty $
 and
 $\lim_{i\to -\infty} r_i = 0 $.

 Indeed,
 V is a Radon measure on $ G_2( \R^4 ) $
 since, e.g., $V( G_2(A_0^s)) = \pi \cdot \pi s^2$.
 Using~\eqref{eq:var-def} and substituting from \eqref{eq:same-vari}
 \begin{multline*}
 \vari V
      (X)
 =
  \sum_{i=-\infty}^{\infty}
    \int_\Omega  \diver_S X(x)
    \intd
            V_{1, r_{2i}, r_{2i+1}}
    +
    \int_\Omega  \diver_S X(x)
    \intd
            V_{2, r_{2i+1}, r_{2i+2}}
 \\
 =
  \sum_{i=-\infty}^{\infty}
    \int_\Omega  \diver_S X(x)
    \intd
            V_{1, r_{i}, r_{i+1}}
 =
    \int_\Omega  \diver_S X(x)
    \intd
            V_{1, 0, \infty}
    =
    0
 \end{multline*}
 since $V_{i,0,\infty}$ is stationary.

 If $r_i=2^i$ then
 $C:=V$ is a non-conical tangent varifold to $V$ at $0\in\R^4$.
 (Also $\eta _{0, \lambda\ \ImageVarifold } V \in \VarTan_0 V$
  for $\lambda \in (0,\infty)$.)

 If $r_i=2^{2^i}$ then
 $V_{1,0,\infty}$ and $V_{2,0,\infty}$
 are two different conical tangent varifolds to $V$ at $0\in\R^4$.
 (Also
 $V_{1,0,r} + V_{2,r,\infty} \in \VarTan_0 V$
 and
 $V_{2,0,r} + V_{1,r,\infty} \in \VarTan_0 V$
 for $r\in (0,\infty)$.)

The above statements about ``non-conical'' tangent and about ``two different'' varifolds
need a bit of justification and
we choose to formulate them separately.
\qed
\end{proof}
\begin{lemma}\label{l:notcon,differ}
Varifold $V = V _ {\{r_i\} _ {i\in \Z}} $ from \eqref{eq:Vseq} is not conical.
Furthermore, $V_{1,r, s} \neq V_{2,r,s}$ for any $0\le r< s \le \infty$.
\end{lemma}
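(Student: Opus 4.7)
The plan is to establish the two claims separately, using the second (non-equality of $V_1$ and $V_2$) to deduce the first (non-conicity).

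\emph{Separation of $V_{1,r,s}$ and $V_{2,r,s}$.} I would exhibit an open set $U \subset G_2(\R^4)$ with $V_{1,r,s}(U) > 0$ and $V_{2,r,s}(U) = 0$. Pick any finite $\rho \in (r,s)$ and set $p_0 = F((1,0),(\rho,0)) = \rho\,e_1$. At the parameter $(a,b,c,d) = (1,0,\rho,0)$ one has $g_1 = \Span\{e_1, e_3\}$ while $g_2 = \Span\{e_1, e_2\}$: two distinct $2$-planes in $\R^4$. Choose $U$ to be a small product neighborhood of $(p_0, \Span\{e_1, e_3\})$ in $G_2(\R^4)$ whose $G(4,2)$-factor is disjoint from a neighborhood of $\Span\{e_1, e_2\}$. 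The $V_{1,r,s}$-mass of $U$ is positive since a neighborhood of $(1,0)$ in $\Sone$ times a neighborhood of $(\rho,0)$ in $A_r^s(\R^2)$ has positive $\Hausd^1 \times \Lebesgue^2$-measure and is mapped into $U$ by $\phi_{1,r,s}$. The main technical step is to verify $V_{2,r,s}(U) = 0$: for any preimage $(a,b,c,d)$ under $\phi_{2,r,s}$ of a spatial point close to $p_0$, the conditions $ac \approx \rho$, $bc \approx 0$, $ad \approx 0$, $bd \approx 0$ with $c^2 + d^2 = 1$ force $d \approx 0$ and $b \approx 0$; hence $g_2 = \Span\{ce_1 + de_3,\, ce_2 + de_4\} \approx \Span\{e_1, e_2\}$, which lies outside the $G(4,2)$-part of $U$.

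\emph{Non-conicity.} With $r_i = 2^i$, I would test the defining property of a conical varifold at $\lambda = 2$. A direct calculation from the definition shows $\eta_{0,\lambda\,\ImageVarifold} V_{i,r,s} = V_{i,\,r/\lambda,\,s/\lambda}$: on each $2$-plane fibre the Hausdorff area rescales by $\lambda^{-2}$, exactly cancelling the $\lambda^{-m}$ factor in the image-varifold formula, while the spatial annulus $A_r^s$ becomes $A_{r/\lambda}^{s/\lambda}$ and the tangent plane is unchanged. Applied term-by-term to \eqref{eq:Vseq} with $\lambda = 2$, the alternating $V_1/V_2$ pattern on the dyadic annuli $[2^i, 2^{i+1}]$ shifts by one index. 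Consequently the restriction $V \mrest G_2(A_1^2)$ equals $V_{1,1,2}$, whereas $\eta_{0,2\,\ImageVarifold} V \mrest G_2(A_1^2)$ equals $V_{2,1,2}$; by the first part these two varifolds differ, so $\eta_{0,2\,\ImageVarifold} V \neq V$ and $V$ is not conical.

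\emph{Main obstacle.} The delicate step is the local inversion in the separation part: one must rule out, via \emph{all} branches of the (essentially $2$-to-$1$) parameterization $\phi_{2,r,s}$, that the tangent plane $g_2$ can be close to $\Span\{e_1, e_3\}$ at spatial points close to $p_0$. Once that is done, both the scaling computation $\eta_{0,\lambda\,\ImageVarifold} V_{i,r,s} = V_{i, r/\lambda, s/\lambda}$ and the termwise bookkeeping on \eqref{eq:Vseq} are routine.
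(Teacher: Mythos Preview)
Your separation argument is correct but takes a different route from the paper. The paper proves the \emph{global} statement that whenever $F(x)=F(y)\neq 0$ one has $g_1(x)\neq g_2(y)$, via a matrix-rank computation (the four vectors $ae_1+be_2$, $ae_3+be_4$, $ce_1+de_3$, $ce_2+de_4$ always span a subspace of dimension at least $3$ when $(a,b)\neq 0\neq(c,d)$); hence $V_{1,r,s}$ and $V_{2,r,s}$ are supported on \emph{disjoint} subsets of $G_2(\R^4\setminus\{0\})$. You instead establish the separation only locally, near the single point $\rho e_1$, by explicitly chasing the finitely many preimages under $\phi_{2,r,s}$. Your argument is more hands-on and avoids the determinant calculation; the paper's argument yields the stronger disjoint-support conclusion, from which non-conicity is immediate for any sequence $\{r_i\}$.

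There is a gap in your non-conicity part: the lemma is stated for \emph{every} admissible sequence $\{r_i\}$, not just $r_i=2^i$. For a general sequence, scaling by $\lambda=2$ does not shift the alternating pattern by one index, so your term-by-term bookkeeping on \eqref{eq:Vseq} breaks down. The fix is easy with your local tool: if $V$ were conical, then for any $\lambda>0$ and any neighbourhood $U\subset G_2(\R^4)$ one has $V(U)>0$ iff $(\eta_{0,\lambda\ \ImageVarifold}V)(U)>0$. Choose $\rho_0\in(r_0,r_1)$ and $\rho_1\in(r_1,r_2)$, set $\lambda=\rho_0/\rho_1$, and take $U$ to be your neighbourhood of $(\rho_1 e_1,\Span\{e_1,e_3\})$. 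Your preimage analysis (applied at $\rho_1$) gives $V(U)=V_{2,r_1,r_2}(U)=0$, while $(\eta_{0,\lambda\ \ImageVarifold}V)(U)$ equals the $V$-mass of a neighbourhood of $(\rho_0 e_1,\Span\{e_1,e_3\})$, which is positive since there $V$ agrees with $V_{1,r_0,r_1}$. This contradiction works for arbitrary $\{r_i\}$.
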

\begin{proof}
We claim that
\begin{equation}\label{eq:ggequal}
    \text{
                if $F(x) = F(y) \neq 0$ then
                $g_1(x) \neq g_2(y)$
          }
      .
\end{equation}
For $x= (x_1,x_2,x_3,x_4) \in \R^4 \setminus\{0\}$,
we have $F^{-1}(x)=\emptyset$ or
\[
F^{-1}(x) \subset
            \{ (\pm t \sqrt{x_1^2+x_3^2},
                \pm t \sqrt{x_2^2+x_4^2},
                \pm \tfrac{\|x\|}{t} \sqrt{x_1^2+x_2^2},
                \pm \tfrac{\|x\|}{t} \sqrt{x_3^2+x_4^2} )\setcolon  t>0\}.
\]
First we show that
\begin{equation}\label{eq:different}
 \text{
     $S_1:=g_1((a,b),\JKXallowbreak (c,d))$ is different
     from $S_2:=g_2((\pm a,\pm b),\JKXallowbreak (\pm c,\pm d))$
     }
\end{equation}
apart from
singular cases $a=b=0$ or $c=d=0$
(when $F((a,b),\JKXallowbreak (c,d))=0$):
Since $g_2$  does not depend on $a$ and $b$,
we have
$S_2 = g_2((a,b),\JKXallowbreak (\pm c,\pm d))$.
Since $g_1$ does not depend on $c$ and $d$,
we can freely change the sign of $c$ (and $d$) in \eqref{eq:different}.
Therefore it is enough to consider
$S_2 = g_2((a,b),\JKXallowbreak (c,d))$.
Assume that $a^2+b^2\neq 0$ and $c^2+d^2\neq 0$.
$S_1$ and $S_2$ are two-dimensional subspaces
and if $S_1=S_2$ then $\Span(S_1\cup S_2)$ is two-dimensional as well,
i.e., the matrix
\[
\left(\begin{smallmatrix}
a & b & 0 & 0 \\
0 & 0 & a & b \\
c & 0 & d & 0 \\
0 & c & 0 & d
\end{smallmatrix}\right)
\]
has rank $2$.
Then
$
(a^2+b^2)c=
-
\left|\begin{smallmatrix}
a & b & 0 \\
0 & 0 & a \\
0 & c & 0
\end{smallmatrix}\right|
+
\left|\begin{smallmatrix}
a & b & 0 \\
0 & 0 & b \\
c & 0 & 0
\end{smallmatrix}\right|
= 0 + 0 = 0
$,
$
(a^2+b^2)d=
\left|\begin{smallmatrix}
a & 0 & 0 \\
0 & a & b \\
0 & 0 & d
\end{smallmatrix}\right|
-
\left|\begin{smallmatrix}
b & 0 & 0 \\
0 & a & b \\
0 & d & 0
\end{smallmatrix}\right|
= 0
$.
Hence $c=0$, $d=0$, a contradiction showing that \eqref{eq:different} is true.

Since $g_i((ta,tb),(uc,ud))=g_i((a,b),(c,d))=S_i$ for $i=1,2$ and
$t,u\in\R\setminus\{0\}$,
we get \eqref{eq:ggequal}.

By \eqref{eq:ggequal},
$V_{1,r,s}$ and $V_{2,r,s}$ are supported by disjoint subsets of $G_2(\R^4)$
whenever $r>0$.
(For $r=0$, $\spt V_{1,r,s} \cap \spt V_{2,r,s} \subset \{ (0,0,0,0) \} \times G(4,2)$.)
Hence
$V_{1,r, s} \neq V_{2,r,s}$ for any $0\le r< s \le \infty$.
Obviously,
varifold
$V = V _ {\{r_i\} _ {i\in \Z}} $ is not conical.
\qed
\end{proof}

\section{The rectifiable varifold}\label{s:rect}%
 The rectifiable stationary varifold
 \NewBsmall
 (let us call it $V_{\text{rect}}$ for now)
 \NewEsmall
 will be obtained as a suitable approximation of
 the above non-rectifiable $V$.
 \NewBsmall
 (The idea of approximation for the case of linear configuration was suggested in Remark~\ref{rem:approximation},
 cf.\ Figure~\ref{fig:network}c).
 Since we work in central configuration,
 our task will be more complicated,
 see the caption of Figure~\ref{fig:network}b) where the simplest idea does not transfer from Figure~\ref{fig:network}a).)
 \NewEsmall

 Instead of planar
 \NewBsmall
 annuli
 (see the support of $V_{1,r,s,(\cos t_1,\sin t_1)}$ in \eqref{eq:V1rsab})
 \NewEsmall
 smoothed out by averaging in the definition of $V$,
 the support now consists of
 \NewBsmall
 countably many
 planar annuli
 and
 countably many
 \NewEsmall
 pieces
 \NewBrefsugSmall
 homeomorphic to annuli (we will call them
 ``rings'')
 whose number will increase
 (through a process that we call ``branching'')
 \NewErefsugSmall
 towards the boundary of the layer.

 \NewBsmall
      Since now the pieces of $
      V_{\text{rect}}
      $ are not oriented radially
      ($x \notin S$ for many $(x,S)\in \spt
      V_{\text{rect}}
      $), the ratio
      $ V_{\text{rect}}(
      G_2(B(0,r)))/r^2$ necessarily decreases as $r$ decreases.
      (This is a corollary to the Monotonicity formula
      \cite[17.5]{Simon}.)
      Therefore we have, and do, take special care
      to make sure that the density
      $\theta^2(
      V_{\text{rect}}
      , 0)$ does not vanish.
 \NewEsmall

 \medbreak

 The proof
 continues towards the end of this paper
 and
 depends on the calculations summarized in the following lemmata.

 \medbreak

 The varifold will be again supported by the three-dimensional surface
 parameterized by $F$, see~\eqref{eq:F-def} and \eqref{eq:F-radial}.

 In every point of $x\in \range F \setminus \{ 0 \}$, we will frequently refer
 to the radial direction $N(x) = x / \left\| x \right\|$ and to a selected
 tangential direction. The latter is conveniently expressed by matrix multiplication.

      Let $J_{13}^{24}$ be the matrix that rotates $ e_1 \to e_3$ and $e_2 \to e_4$ given by
\begin{equation}\label{eq:J01}
   J_{13}^{24} =
   J(0,1)
           = \begin{pmatrix}
                      0 & 0 & -1 & 0 \\
                      0 & 0 & 0 & -1 \\
                      1 & 0 & 0 & 0 \\
                      0 & 1 & 0 & 0
            \end{pmatrix}
            .
\end{equation}
      For $\varepsilon \ge 0$
      and
      $x\in \R^4 \setminus\{0\}$,
 \NewBrefsugSmall
      consider the following set of $2$-dimensional planes in $\R^4$
 \NewErefsugSmall
\begin{multline}
\label{eq:Gradx}
       G_{\RadAndOneThree}^{\,\varepsilon}(x)
       =
       \{
          \Span \{u,v\} \setcolon
                       \{u,v\}\text{ orthonormal},\
       \\
                       \| u - N(x) \| \le \varepsilon,\
                       \| v - J_{13}^{24} N(x) \| \le \varepsilon
       \},
\end{multline}
        and
 \NewBrefsugSmall
        related subset of $\R^4 \times G(4,2)$
 \NewErefsugSmall
\begin{equation}
\label{eq:Grad}
       G_{\RadAndOneThree}^{\,\varepsilon}
       =
       \{
          (x, S)\setcolon
                       x \in \R^4 \setminus\{0\},\
                       S \in G_{\RadAndOneThree}^{\,\varepsilon}(x)
       \}.
\end{equation}
        Then $ \{ G_{\RadAndOneThree}^{\,\varepsilon} (x) \setcolon \varepsilon > 0\}$
        is a neighbourhood base for a special point
        $\Span \{ N(x) ,
        \JKXallowbreak
        J_{13}^{24} N(x) \} \in G(4,2)$,
        which is the span of the radial direction and the direction determined by $\JOneThree$.
        From this comes the subscript in our notation.

        Note that if we let
\begin{equation}\label{eq:J1234}
       J_{12}^{34} = \begin{pmatrix}
                      0 & -1 & 0 & 0 \\
                      1 & 0 & 0 & 0 \\
                      0 & 0 & 0 & -1 \\
                      0 & 0 & 1 & 0
            \end{pmatrix}
\end{equation}
        and define
        $G_{\RadAndOneTwo}^{\,\varepsilon}(x)$ accordingly
        then there is $\varepsilon_0>0$ (independent of $x$) such that
\begin{equation}\label{eq:J1324J1234}
         G_{\RadAndOneThree}^{\,\varepsilon}(x)
         \cap
         G_{\RadAndOneTwo}^{\,\varepsilon}(x)
         =
         \emptyset
\end{equation}
        for all $\varepsilon \in [0, \varepsilon_0)$.
        To see that, we first observe that
\begin{equation}\label{eq:differentSpan}
        \Span \{ N(x), J_{13}^{24} N(x) \}
        \neq
        \Span \{ N(x), J_{12}^{34} N(x) \}
        .
\end{equation}
        This is similar to \eqref{eq:different} but now the proof is even easier.
        First consider \eqref{eq:differentSpan} in the special case when $N(x)= e_1$.
        Then if \eqref{eq:differentSpan} were not valid, then the matrix
        \[
           \begin{pmatrix}
              1 &  0 &  0 &  0  \\
              0 &  1 &  0 &  0  \\
              0 &  0 &  1 &  0
           \end{pmatrix}
        \]
        would have range at most two.
        Using a rotation, we see that \eqref{eq:differentSpan} in general is
        equivalent to \eqref{eq:differentSpan} in the special case when $N(x)= e_1$.
 \NewBsmall
        Moreover, we see that $\varepsilon_0>0$ admissible for \eqref{eq:J1324J1234}
        is independent of $x\in \R^4\setminus \{0\}$.
 \NewEsmall

\subsection{Basic surface, rings and their joins.}
 \NewB
    Let $d > 0$, $\alpha_0 \in \R$.

     We will use pieces of a minimal surface that is derived by a ``rotation'' in $\R^4$ from
     planar curve (in polar coordinates)
\begin{equation}\label{eq:ralpha}
       r(\alpha) =
       r^{d,\alpha_0}(\alpha) =
       \sqrt { d \, / \cos 2 ( \alpha - \alpha_0 )},
    \qquad
    \alpha - \alpha_0 \in (-\pi/4, \pi/4)
    .
\end{equation}
     It turns out that the curve is a hyperbola.
     To make a geometrical picture, let us consider the special case $\alpha_0 = \pi/4$
     (the general case is obtained by rotations in the plane);
     then $r(\alpha)^2 = d \, / 2 \sin \alpha \cos \alpha$
     and
     \[
     \{ (r(\alpha)\cos \alpha, r(\alpha)\sin \alpha) \}
     =
     \{ (x_1, x_2) \subset (0,\infty)^2 \setcolon 2x_1 x_2 = d \}
     .
     \]
     The most important are the portions of the hyperbola far away from the origin,
     that is with $x_1$ (respectively, $x_2$) restricted to interval
     $(\varepsilon_1, \varepsilon_2)$ close to $0$.
     This corresponds to $\alpha - \alpha_0 \in (t_1,t_2)$ close to either $-\pi/4$
     or $\pi/4$.

     The rotation applied
     is the one obtained from $F$ (see \eqref{eq:F-def}):
  \begin{equation}\label{eq:UfromF}
     U(\alpha,\beta)
   =
     F(
     (r(\alpha) \cos \alpha, r(\alpha) \sin \alpha)
     , (\cos \beta, \sin \beta))
     ,
  \end{equation}
     where $\alpha - \alpha_0 \in (t_1,t_2) \subset (-\pi/4, \pi/4)$  and $\beta \in \R$.
     The set thus obtained in \eqref{eq:Stt} below is homeomorphic
     (and nearly isometric, for suitable pairs $t_1,t_2$)
     to a planar annulus and we call it a ``ring''.

     For imagination of the full surface, one might notice that it is obtained by
     deformation of a part (a strip, since $\alpha$ is restricted to an interval)
     of Clifford torus
     $F( S_1(\R^2)\times S_1(\R^2))$,
     with the middle circle ($\alpha=\alpha_0$) scaled at $\sqrt{d}$
     and the ends ($\alpha \to \alpha_0 \pm \pi/4$) lifted in the radial direction to infinity.
     As we already indicated above, we will use only rather flat parts
     that are
     lifted high above $\sqrt{d}$
     \NewNewBsmall
     and have nearly radial directions.
     \NewNewEsmall

    \medbreak

     The notation.
     The lemma below summarizes the properties of our minimal surface $S=\range U$.
     The letter $S$ with various indexes denotes parts of the surface
     while $V$ is used for the corresponding varifolds.
     The upper index is reserved for the parameters $d$ and $\alpha_0$;
     we drop them from
     $
     S_{t_1,t_2}
     =
     S^{d,\alpha_0}_{t_1,t_2}
     $
     but we have to leave them in
     $
     V^{d,\alpha_0}_{t_1,t_2}
     $
     (see \eqref{eq:ring-def} below).
     Lower index denotes the range for the variable $\alpha$. The range is either an interval $(t_1, t_2)$
     (thus $S_{t_1,t_2}$ is our ``ring'')
     or a single point $t_1$; thus $S_{t_1}$ is a circle. Boundary of the ring
     $S_{t_1,t_2}$ consists of two circles $S_{t_1}$, $S_{t_2}$.
     Some time later the circle will be denoted by $K(\varrho,\alpha)$
     ($\varrho$ will be the radius) ---
     this change in the notation will be necessary as to drop the dependence on 
     $d$ and $\alpha_0$. Thus $S_{t_1}=K(r^{d,\alpha_0} (t_1) , t_1)$.

  \begin{remark}
  \NewNewBsmall
   $S=\range U$ and all other objects until
   Lemma~\ref{l:system-A}
   are included in
   the Clifford cone
   \ \
   $\range F
   \JKXallowbreak
   =F(\R^2\times \Sone) = \R \cdot F(\Sone\times\Sone)$.
   Their geometry is determined in the $(r,\alpha)$ plane
   and all of them are invariant with respect to the parameter $\beta$ in \eqref{eq:UfromF}
   and the corresponding rotation.
   The rotation introduces factor $r$ into the area functional and
   influences
   thus the shape of the minimal surface.
   Therefore we stick with the full $4$-dimensional description
   and do not restrict ourselves to the $(r,\alpha)$ plane where
   everything is determined.
   In Lemma~\ref{l:system-B}, the roles of $\alpha$ and $\beta$ are interchanged.
  \NewNewEsmall
  \end{remark}

 \NewE

\begin{lemma}\label{l:basic}
 1.
    Let $d > 0$,
    $\alpha_0 \in \R$
    and
    $r(\alpha)$ as in \eqref{eq:ralpha}.
    Consider the parameterized surface
    $
     U (\alpha  , \beta)
     =
       U^{d,\alpha_0} (\alpha  , \beta)
    $,
\begin{multline*}
     U(\alpha,\beta)
     =
      (
      r(\alpha) \cos \alpha \cos \beta,
      r(\alpha) \sin \alpha \cos \beta,
      r(\alpha) \cos \alpha \sin \beta,
      r(\alpha) \sin \alpha \sin \beta
      )
      ,
   \\
     \alpha - \alpha_0 \in (-\pi/4, \pi/4),\ \beta \in \R
   .
\end{multline*}
   \textup($U$ is $2\pi$-periodic in $\beta$, and injective on every period.\textup)
   Then $U$ is a minimal surface.

   2.
   Let
\begin{align}
  \label{eq:Stt}
     S_{t_1,t_2}
     &:=
     \{
       U(\alpha,\beta)
      \setcolon
        \alpha \in (t_1, t_2),\ \beta \in \R
      \}
      ,
   \qquad\qquad\text{\textup(the ``ring''\textup)}
  \\
  \label{eq:St}
     S_{t_1}
     &:=
     \{
       U(t_1,\beta)
      \setcolon
         \beta \in \R
      \}
      ,
  \\
  \notag
   S &:=
      \range( U )
     .
\end{align}

   Then the rectifiable varifold
   $V_{\Hausd^2 \mrest S}$
   is stationary.
   \label{A:tady-is-stationary}

   3.
   \textup(The ring varifold and its first variation.\textup)
   For every $x\in S$,
   find any $p$ satisfying $U(p) = x$
   and let
      \[
           \pmb \eta  _{\alpha_0} (x)
                          =
                       N(   \partial U( p ) / \partial \alpha )
      \]
   where $N(y) = y / \| y \|$.

   For  $ \alpha_0 - \pi/4 < t_1 < t_2 < \alpha_0 + \pi/4$,
   let
   \begin{equation}\label{eq:ring-def}
    V^{d,\alpha_0}_{t_1,t_2}
    =
    V_{\Hausd^2 \mrest S_{t_1,t_2}}
    \ .
   \end{equation}
   Then,
   \begin{equation}\label{eq:ring-vari}
    \vari
    V^{d,\alpha_0}_{t_1,t_2}
    (X)
    =
         \int _ {S_{t_2}}  X(x) \cdot
                           \pmb \eta _{\alpha_0}(x)
                                       \intd \Hausd^{1}
       - \int _ {S_{t_1}}  X(x) \cdot
                           \pmb \eta _{\alpha_0}(x)
                                       \intd \Hausd^{1}
       .
   \end{equation}

   4.
   \textup(Two rings at touch.\textup)
   If
   $\alpha_1 \le \alpha \le \alpha_2$
   and
   $\alpha - \alpha_1 = \alpha_2 - \alpha \in [0, \pi/4)$ then
   \begin{equation}\label{eq:UU}
    U^{d, \alpha_1} (\alpha, \beta) = U^{d,\alpha_2} (\alpha, \beta)
   \end{equation}
   and
   \begin{equation}\label{eq:two-rings}
     \pmb\eta_{\alpha_1} ( U^{d,\alpha_1} (\alpha, \beta) )
     -
     \pmb\eta_{\alpha_2} ( U^{d,\alpha_2} (\alpha, \beta) )
     =
           2 \sin 2(\alpha-\alpha_1)
           \cdot N( U^{d, \alpha_1} (\alpha, \beta) )
   \end{equation}
   is a radial vector at the point.

   5. The tangent plane to $U=U^{d,\alpha_0}$ at $x=U(\alpha, \beta)$ belongs to
      $
       G_{\RadAndOneThree}^{\, 2 \cos 2(\alpha-\alpha_0) }(x)
      $ and
   \[
      \spt
      V^{d,\alpha_0}_{t_1,t_2}
      \subset
              G_{\RadAndOneThree}^{\,\varepsilon}
   \]
   where $\varepsilon = 2 \max \cos([ 2(t_1-\alpha_0), 2(t_2-\alpha_0)])$.

   6. \textup(Mass distribution\textup)
      \[
      \mass (V^{d,\alpha_0}_{t_1,t_2})
        =
          \pi d ( \tan 2(t_2-\alpha_0) - \tan 2(t_1-\alpha_0) )
          .
      \]
      For every $0< \sqrt d \le  r_1 < r_2$ there is a number
      $ \rho =\rho(d,r_1,r_2) \in [r_1, r_2] $
      such that whenever
$\alpha_0 < t_1 < t_2 < \alpha_0 + \pi/4 $,
      and $t_1 \le s_1 \le s_2 \le t_2$
      then,
\begin{multline*}
      \mass (V^{d,\alpha_0}_{t_1,t_2})
        =
          \pi \left| \sqrt { r(t_2)^4 - d^2 } - \sqrt { r(t_1)^4 - d^2 } \right|
\\
        =
           \tfrac{1}{\sqrt{1-d^2/(\rho(d,r(t_1), r(t_2)))^4}}
           \,
           \pi \left| r(t_2)^2 - r(t_1)^2 \right|
\end{multline*}
      and
\begin{multline*}
           V^{d,\alpha_0}_{t_1,t_2} ( G_2 ( A _ {r(s_1)} ^ {r(s_2)} ) )
        =
      \mass (V^{d,\alpha_0}_{s_1,s_2})
        =
          \pi \left| \sqrt { r(s_2)^4 - d^2 } - \sqrt { r(s_1)^4 - d^2 } \right|
\\
        =
           \tfrac{1}{\sqrt{1-d^2/(\rho(d,r(s_1),r(s_2))^4}}
           \,
           \pi \left| r(s_2)^2 - r(s_1)^2 \right|
  .
\end{multline*}
      If $\alpha_0 - \pi /4 < t_1 < t_2 < \alpha_0$, the same holds with
      $A _ {r(s_1)} ^ {r(s_2)}$
      replaced by
      $A _ {r(s_2)} ^ {r(s_1)}$
      and $\rho$ extended by formula
      $\rho(d,r_1,r_2):=\rho(d,r_2,r_1)$ for $\sqrt d\le r_2< r_1$.
\end{lemma}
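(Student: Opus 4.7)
The plan is to reduce every part of the lemma to a single explicit computation of the parameterization $U$ in the orthonormal frame $\{N(x),\JOneTwo N(x),\JOneThree N(x)\}$ at each $x \in S$. Writing $N(x)=(\cos\alpha\cos\beta,\sin\alpha\cos\beta,\cos\alpha\sin\beta,\sin\alpha\sin\beta)$ one verifies by inspection (the two cross terms collapse) that these three vectors form an orthonormal triple. From $r^2 = d/\cos 2(\alpha-\alpha_0)$ one obtains $r'(\alpha)=r(\alpha)\tan 2(\alpha-\alpha_0)$, and the chain rule gives the clean identities
\begin{equation*}
 \partial_\alpha U = r'\, N + r\, \JOneTwo N,\quad \partial_\beta U = r\, \JOneThree N,\quad \partial_\alpha N = \JOneTwo N,\quad \partial_\beta N = \JOneThree N.
\end{equation*}
In particular the induced metric is diagonal with $g_{\alpha\alpha}=r^2/\cos^2 2(\alpha-\alpha_0)$ and $g_{\beta\beta}=r^2$, and the unit co-normal has the explicit form $\pmb\eta_{\alpha_0}(x)=\sin 2(\alpha-\alpha_0)\,N(x) + \cos 2(\alpha-\alpha_0)\,\JOneTwo N(x)$.

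For Parts 1 and 2 I differentiate once more, getting $\partial_\beta^2 U = -rN$ and $\partial_\alpha^2 U = (r''-r)N + 2r'\JOneTwo N$, and substitute into \eqref{eq:minimal}. The key algebraic collapse is that after plugging $r'' = r(\sin^2 2(\alpha-\alpha_0)+2)/\cos^2 2(\alpha-\alpha_0)$ the raw mean-curvature vector $g^{\alpha\alpha}\partial_\alpha^2 U + g^{\beta\beta}\partial_\beta^2 U$ reduces to $(2\sin 2(\alpha-\alpha_0)/r)\,\pmb\eta_{\alpha_0}$, which is tangential; its normal projection $\mathbf H$ therefore vanishes, proving Part 1. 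Part 2 then follows from \eqref{eq:var-smooth2}: since $r(\alpha)\to\infty$ as $\alpha\to\alpha_0\pm\pi/4$ the two boundary circles of $S$ escape to infinity, so any compactly supported $X$ sees only the interior where $\mathbf H=0$.

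Parts 3, 4, and 5 are then algebraic consequences of the formula for $\pmb\eta_{\alpha_0}$. Part 3 is \eqref{eq:var-smooth2} applied to the smooth compact manifold-with-boundary $\overline{S_{t_1,t_2}}$, with the inward unit co-normal being $+\pmb\eta_{\alpha_0}$ at $S_{t_1}$ and $-\pmb\eta_{\alpha_0}$ at $S_{t_2}$. Part 4 is immediate: at a common touching point one has $\alpha-\alpha_1=-(\alpha-\alpha_2)$, so the cosines (hence the $\JOneTwo N$ coefficients) agree while the sines (hence the $N$ coefficients) are opposite, yielding a purely radial difference of the stated magnitude. For Part 5 the tangent plane is spanned by the orthonormal pair $\{u,v\}$ with $v=\JOneThree N(x)$ (matching its target with error $0$) and $u=\pm\pmb\eta_{\alpha_0}(x)$, sign chosen so that the $N(x)$-coefficient of $u$ equals $|\sin 2(\alpha-\alpha_0)|$; then $\|u-N(x)\|^2 = 2-2|\sin 2(\alpha-\alpha_0)|$, and writing $\theta=\pi/4-|\alpha-\alpha_0|$ gives $\|u-N(x)\|=2|\sin\theta|$ while $2\cos 2(\alpha-\alpha_0) = 4\sin\theta\cos\theta$, so the required bound reduces to the elementary inequality $1\le 2\cos\theta$ valid on $[0,\pi/4]$.

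Finally, for Part 6 the area element is $\sqrt{g}=r^2/\cos 2(\alpha-\alpha_0) = d/\cos^2 2(\alpha-\alpha_0)$, so integrating over $\beta\in[0,2\pi)$ and then $\alpha\in(t_1,t_2)$ yields the first formula $\pi d[\tan 2(t_2-\alpha_0)-\tan 2(t_1-\alpha_0)]$. Substituting $\tan 2(\alpha-\alpha_0)=\pm\sqrt{r(\alpha)^4-d^2}/d$ (sign according to the side of $\alpha_0$) gives the second form, and a mean-value argument applied to $u\mapsto\sqrt{u^2-d^2}$ at $u=r(t_i)^2$ produces $\rho=\rho(d,r(t_1),r(t_2))\in[r(t_1),r(t_2)]$ realizing the third. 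The restriction identity $V^{d,\alpha_0}_{t_1,t_2}(G_2(A^{r(s_2)}_{r(s_1)}))=\mass(V^{d,\alpha_0}_{s_1,s_2})$ follows from strict monotonicity of $\alpha\mapsto r(\alpha)$ on each of $(\alpha_0,\alpha_0+\pi/4)$ and $(\alpha_0-\pi/4,\alpha_0)$, since the preimage of the annulus is then exactly the subring $(s_1,s_2)$; the case $t_1<t_2<\alpha_0$ is handled symmetrically by the sign reversal built into the extended definition of $\rho$. The principal obstacle throughout is the algebraic collapse in Part 1 that makes $\mathbf H$ vanish; once the decomposition with respect to the frame $\{N,\JOneTwo N,\JOneThree N\}$ is in hand, the remaining parts are bookkeeping around the single identity for $\pmb\eta_{\alpha_0}$.
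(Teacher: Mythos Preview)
Your proof is correct and runs parallel to the paper's; the underlying computation is the same, since your frame $\{N,\JOneTwo N,\JOneThree N\}$ is exactly the paper's $\{BA, BA', B'A\}$ (the paper writes $U=rBA$ with $B=J(\cos\beta,\sin\beta)$ and $A=(\cos\alpha,\sin\alpha,0,0)^T$, and then $B'A=J(0,1)BA=\JOneThree N$, $BA'=\JOneTwo N$). Two points where your route differs slightly and is in fact cleaner:

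\emph{Part 1.} The paper computes $g^{\alpha\alpha}\partial_\alpha^2 U+g^{\beta\beta}\partial_\beta^2 U$, projects off the tangent, and reduces $\mathbf H=0$ to the scalar ODE $rr''-3(r')^2-2r^2=0$, which it then checks for $r=\sqrt{d/\cos 2(\alpha-\alpha_0)}$. You instead substitute $r''=r(\sin^2 2(\alpha-\alpha_0)+2)/\cos^2 2(\alpha-\alpha_0)$ directly and observe the stronger identity
\[
g^{\alpha\alpha}\partial_\alpha^2 U+g^{\beta\beta}\partial_\beta^2 U=\frac{2\sin 2(\alpha-\alpha_0)}{r}\,\pmb\eta_{\alpha_0},
\]
which is manifestly tangential. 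This is a nicer collapse; it bypasses the ODE verification and simultaneously explains why the co-normal $\pmb\eta_{\alpha_0}$ is the natural object for Parts 3--5.

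\emph{Part 5.} The paper writes $\|N(\partial_\alpha U)-N(U)\|\le 2r/\sqrt{r'^2+r^2}=2\cos 2(\alpha-\alpha_0)$ without choosing a sign on $u$; as written this inequality actually fails for $\alpha-\alpha_0$ sufficiently negative (it is equivalent to $\sin 2(\alpha-\alpha_0)\ge -1/2$). Your argument correctly exploits that $G_{\RadAndOneThree}^{\,\varepsilon}(x)$ is defined via $\Span\{u,v\}$, so one may take $u=\pm\pmb\eta_{\alpha_0}$ with the sign making the $N$-coefficient equal to $|\sin 2(\alpha-\alpha_0)|$; your subsequent reduction to $1\le 2\cos(\pi/4-|\alpha-\alpha_0|)$ is valid on the whole range. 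So here your treatment is more careful than the paper's.

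Parts 2, 3, 4, 6 match the paper's arguments essentially verbatim.
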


  \begin{remark}
  For $\alpha_0 + \pi/4 - \varepsilon < t_1<t_2< \alpha_0 + \pi/4$
  (or analogously for $\alpha_0 -\pi/4 < t_1 < t_2 < \alpha_0 - \pi/4 + \varepsilon $),
  and $r=r(t_1)$, $s=r(t_2)$,
  the ring $S_{t_1,t_2} $ is intended to be a perturbation of the annulus supporting $V_{1,r,s,(\cos t_1,\sin t_1)}$
  from \eqref{eq:V1rsab}.
  \qqeedd
  \end{remark}

\NewB
  \begin{remark}
   The surface $S=\range(U)$ can be found in \cite{Lawlor}.
  \qqeedd
  \end{remark}
\NewE

  We will give two arguments for the minimality of surface $U$, the first one is
  easy but slightly incomplete:
  Let $ \alpha_0 -\pi/4 < t_1 < t_2 <  \alpha_0 + \pi/4$
  with $t_2$ close to $t_1$,
  and
  consider the part
  of the surface determined by the range $t \in (t_1, t_2)$
  (cf.~\eqref{eq:Stt});
  recall
  this is the surface created
  by a certain ``rotation''
  from curve
  \[
  \gamma(t) := ( r(t) \cos t, r(t) \sin t, 0, 0),
  \qquad
  t\in (t_1, t_2)
  .
  \]
  The boundary of the selected part consists of two circles
  $S_{t_1}$,
  $S_{t_2}$
  (see~\eqref{eq:St}).
  To this correspond fixed values
  $\gamma(t_1)$,
  $\gamma(t_2)$,
  as boundary conditions for $\gamma$.

  Our first and incomplete argument for the minimality of $U$ is based on comparing
  the area of the selected part of $U$ with surfaces corresponding
  to other possible curves $\gamma$ in $\R^2 \times \{0\}^2$
  with the same boundary condition.

  The area is given by the formula
  \[
     A = 2\pi \int _ {(t_1, t_2)} \| \gamma'(t) \| \cdot \| \gamma(t) \| \intd t
  \]
  since the length of the circle through $\gamma(t)$ is $2\pi \| \gamma(t) \|$.
  We will view $\gamma$ as a curve in $\R^2 \cong \R^2 \times \{0\}^2$,
  and assume that $\gamma$ is the graph of a function $r$
  in polar coordinates,
  that is
  $\gamma(t) = ( r(t) \cos t, r(t) \sin t) $.
  On $\R^2 = \mathbb C$, consider the map $z\mapsto z^2$ whose derivative is $2 z$.
  That maps curve $\gamma$ to a curve $\gamma^2$
  (where $\gamma^2(t) = (\gamma(t))^2 \in \mathbb C$)
  whose length
  \[
       L = \int _ {(t_1, t_2)} \| (\gamma^2)'(t) \| \intd t
         =  \int _ {(t_1, t_2)} 2 \| \gamma'(t) \| \cdot \| \gamma(t) \|  \intd t
  \]
  we find to be directly proportional to $A$.
  It is well known that $L$ is minimal if $\gamma^2$ is the segment connecting its endpoints.
  A special case is a vertical segment given in polar coordinates by
  $(\tilde r, \tilde \alpha)$ with $ \tilde r = d \, / \cos \tilde \alpha $;
  the general case is $ \tilde r = d \, / \cos (\tilde \alpha - \tilde \alpha_0) $.
  Since $z\mapsto z^2$ is expressed in polar coordinates as
  $(r, \alpha) \mapsto (\tilde r, \tilde \alpha) = (r^2, 2 \alpha)$,
  we obtain the curve
  $\gamma(t) = ( r(t) \cos t, r(t) \sin t) $
  with
  $r(t) = \sqrt { d \, / \cos 2(t - \alpha_0) }$, $t\in [t_1,t_2]$.
  The corresponding rotation surface is our best candidate for
  the minimum area surface spanned between $S_{t_1}$ and $S_{t_2}$
  and $U$ likely is a minimal surface.
\begin{theopargself}
\begin{proof}[of Lemma \ref{l:basic}]
  \noindent\newline
  1.
  For formal verification of the minimality of surface $U$,
  it is enough to verify
  that
  $\mathbf H (U) = 0$.

  For $a,b,\alpha,\beta \in\R$, let
\[
 B = B(\beta) = J(\cos \beta, \sin \beta),
 \qquad
 \text{ where }
 \qquad
 J(a,b) 
 =
 \begin{pmatrix}
  a  &  0  & -b  &  0 \\
  0  &  a  &  0  & -b \\
  b  &  0  &  a  &  0 \\
  0  &  b  &  0  &  a
 \end{pmatrix}
\]
and (since we choose to treat the vectors, including $U$, as column vectors, we will
distinguish that in notation from this moment)
\[
 A = A(\alpha) = (\cos \alpha, \sin \alpha, 0, 0)^T.
\]
Then
\[
 U = r B A
\]
where $r$ is a function of $\alpha$:
\[
 U(\alpha, \beta) = r(\alpha) B(\beta) A(\alpha), 
 \qquad
  \alpha \in (-\pi/4, \pi/4),\ \beta \in \R
  .
\]
Note that obviously $\|U\| = r$, hence
\[
 N( U ) = B A
 .
\]
We have
\begin{align}
 \label{eq:Uder-alpha}
 \frac {\partial U}{\partial \alpha} &= r' B A + r B A' = B ( r' A + r A' )
 \\
 \label{eq:Uder-beta}
 \frac {\partial U}{\partial \beta} &= r B' A
\end{align}
where
\[
 A' = ( -\sin \alpha, \cos \alpha, 0, 0)^T,
 \qquad
 B' = J( -\sin \beta, \cos \beta)
 .
\]
Furthermore,
\begin{equation*}\label{eq:druha-unused}
 A'' = ( -\cos \alpha, -\sin \alpha, 0, 0)^T= -A,
 \qquad
 B'' = J( -\cos \beta, -\sin \beta) = -B
\end{equation*}
and hence
\begin{align}
 \label{eq:druhaA}
 \frac {\partial^2 U}{\partial \alpha^2} &=  B ( r'' A       + 2 r' A' + r A'' )
                                          =  B ( (r'' - r) A + 2 r' A' )
 \\
 \label{eq:druhaB}
 \frac {\partial^2 U}{\partial \beta^2} &= r B'' A = - r B A
\end{align}

Obviously
\begin{equation}\label{eq:AA'ortonormal}
   A^T A = (A')^T A' = 1
   \qquad
   A^T A' = (A')^T A = 0
   .
\end{equation}
It is immediate that
$ J(a,b)^T = J(a,-b)$ and $J(a,b)^T J(a,b) = (a^2 + b^2) I$
where $I$ is the identity matrix; in particular
\begin{align}
\label{eq:BtB=I}
   B^T B &= I,
   \\
\label{eq:B'tB'=I}
   (B')^T B' &= I
   .
\end{align}
Hence
\begin{equation}\label{eq:Binv}
   B^{-1} = B^T
   .
\end{equation}
Furthermore, $ J(b, a) J(a, b) = J(0 , a^2 + b^2) $, in particular
\begin{equation}\label{eq:BtB'=J}
   B^T B' = J(0,1)
\end{equation}
and $ B' B^T  = J(0,1)$. Multiplying that by $B$ from the right (see~\eqref{eq:Binv}) we
get
\begin{equation}\label{eq:B'}
    B' = J(0,1) B
    .
\end{equation}

The metric tensor is
\begin{align}
\label{eq:g11}
  g_{11}
  = 
  \frac {\partial U}{\partial \alpha}
  \cdot
  \frac {\partial U}{\partial \alpha}
  &=
  ( r' A + r A' )^T B^T B ( r' A + r A' ) 
  \\
 \notag
  &\overset{\eqref{eq:BtB=I}}{=}
  ( r' A + r A' )^T ( r' A + r A' )
  \overset{\eqref{eq:AA'ortonormal}}{=}
  (r')^2 + r^2
\\
 \notag
  g_{22}
  = 
  \frac {\partial U}{\partial \beta}
  \cdot
  \frac {\partial U}{\partial \beta}
  &=
  r A^T (B')^T r B' A
  \\
 \notag
  &\overset{\eqref{eq:B'tB'=I}}{=}
  r^2 A^T A
  \overset{\eqref{eq:AA'ortonormal}}{=}
  r^2
\\
\label{eq:g12}
  g_{12} = g_{21}
  = 
  \frac {\partial U}{\partial \alpha}
  \cdot
  \frac {\partial U}{\partial \beta}
  &=
  ( r' A + r A' )^T B^T r B' A
  \\
 \notag
  &\overset{\eqref{eq:BtB'=J}}{=}
  r ( r' A + r A' )^T J(0,1) A
  =
  0
\end{align}
 since $  A, A'   \in \R^2 \times \{0\}^2$
 while $ J(0,1) A \in\{0\}^2 \times \R^2$.
 Therefore
\begin{equation*}
   (g_{ij}) = \begin{pmatrix} (r')^2 + r^2 & 0 \\ 0 & r^2 \end{pmatrix},
   \qquad
   (g^{ij}) = \begin{pmatrix} \tfrac{1}{(r')^2 + r^2} & 0 \\ 0 & \tfrac{1}{r^2} \end{pmatrix}.
\end{equation*}
We want to verify $\mathbf H(U)=0$ using \eqref{eq:H} (or, equivalently, \eqref{eq:minimal}).
Thus we want to verify
\[
   v^{\bot} = 0,
   \qquad
   \text{that is,}
   \qquad
   v \in \Span \left\{ \tfrac{\partial U}{\partial x^i} \right\}
\]
where
\[
 v
   =
   \tfrac{1}{(r')^2 + r^2}
   B ( (r'' - r) A + 2 r' A' )
   + \tfrac{1}{r^2}
   ( - r B A )
   .
\]
That is
\[
   \tfrac{1}{(r')^2 + r^2}
   B ( (r'' - r) A + 2 r' A' )
   + \tfrac{1}{r^2}
   ( - r B A )
   \in
   \Span \{
        B ( r' A + r A' ) ,
        r B' A
   \}
   .
\]
Multiplying  by $B^{-1}$ and using \eqref{eq:Binv}, \eqref{eq:BtB'=J}, we get equivalent
relation
\[
   \tfrac{1}{(r')^2 + r^2}
   ( (r'' - r) A + 2 r' A' )
   - \tfrac{1}{r} A 
   \in
   \Span \{
         r' A + r A'  ,
        r J(0,1) A
   \}
   .
\]
Since $A, A' \in \R^2 \times \{ 0 \} ^2$, while $r J(0,1) A \in \{ 0 \} \times \R^2$, 
the latter can be removed:
\[
   \tfrac{1}{(r')^2 + r^2}
   ( (r'' - r) A + 2 r' A' )
   - \tfrac{1}{r} A 
   \in
   \Span \{
         r' A + r A'
   \}
   .
\]
Now the relation reduces to $\R^2 \times \{ 0 \} ^2$, where $A, A'$ form an orthogonal base.
We have $ r' A + r A' \mathbin{\bot} r A - r' A'$ and our relation is equivalent to
\[
   ( r A - r' A' ) ^T
   \left(
   \tfrac{1}{(r')^2 + r^2}
   ( (r'' - r) A + 2 r' A' )
   - \tfrac{1}{r} A 
   \right )
   =
   0
   .
\]
Using \eqref{eq:AA'ortonormal} this reduces to
\[
  r r''  - 3 (r')^2 - 2 r^2  = 0
  .
\]
It is easy to check that our function $r(\alpha) = \sqrt{ d \, / \cos 2 (\alpha - \alpha_0)}$
verifies this equation.

Thus
we proved
that
the mean curvature vector $\mathbf H(U)$ is identically zero
and
$U(\alpha, \beta)$ is a minimal surface.

2.
Since $\mathbf H(U) = 0$ and there is no boundary
($U$ is defined on $\R^2$ and essentially injective)
the associated varifold is stationary.

3.
To obtain \eqref{eq:ring-vari}, it is enough to use
\eqref{eq:var-smooth2};
The boundary of
$S_{t_1,t_2}$
is $S_{t_1} \cup S_{t_2}$,
and
if $U(p) \in \partial S_{t_1,t_2}$ then
$\partial U(p)/\partial \beta$ is obviously tangent to
$\partial S_{t_1,t_2}$
and
$\eta:=\partial U(p)/\partial \alpha$ is orthogonal to it, see~\eqref{eq:g12}.
If $p=(t_1,\beta)$ then $\eta$ is an inner normal, if $p=(t_2, \beta)$ then it is outer.

4.
    Assume now that
    $\alpha _1 \le \alpha \le \alpha _2$ and
    \begin{equation}\label{eq:alpha1alpha2}
    \alpha - \alpha_1 = \alpha_2 - \alpha \in [0, \pi/4)
    .
    \end{equation}
    Then
    $r^{d,\alpha_1}(\alpha) =
     r^{d,\alpha_2}(\alpha)$
    and
    hence
    $ U^{\alpha_0} (\alpha, \beta) = U^{\alpha_1} (\alpha, \beta)$.

   At any point $(\alpha,\beta)$ satisfying \eqref{eq:alpha1alpha2}
   we have, by \eqref{eq:Uder-alpha} and \eqref{eq:g11},
\begin{align}
\notag
 \frac {\partial U}{\partial \alpha} &= r' B A + r B A'
\\
\label{eq:tangent-rad}
     N\left( \frac {\partial U}{\partial \alpha}
     \right)
   &=
   \frac{ r'}{\sqrt{ r'^2 + r^2 }} B A + \frac{r}{\sqrt{ r'^2 + r^2 }} B A'
\end{align}
where $A$, $B$ and $r$ are the same regardless
if
$ U^{d, \alpha_1}$ or $U^{d, \alpha_2}$ is considered.
Only $r'$ is different:
\[
   (r^{d,\alpha_1})'(\alpha)
   = - (r^{d,\alpha_2})'(\alpha)
   .
\]

Letting, e.g., $\alpha_0:=\alpha_1$, we have
\begin{align}
\label{eq:r-formula}
      r &= \sqrt d \, \cos ^{-1/2} 2(\alpha-\alpha_0)
      \\
\label{eq:rbar-formula}
      r' &=  \sqrt d \, \cos ^{-3/2} 2(\alpha-\alpha_0) \sin 2(\alpha-\alpha_0)
      \\
\label{eq:sqrt-formula}
      \sqrt{ r'^2 + r^2 } &=
            \sqrt d \, \cos ^{-3/2} 2(\alpha-\alpha_0)
     .
\end{align}
Since \eqref{eq:tangent-rad} are the values of
$\pmb \eta _{\alpha_1}$ and $\pmb \eta _{\alpha_2}$,
we get \eqref{eq:two-rings}, that is,
\[
     \pmb\eta_{\alpha_1} ( U^{d,\alpha_1} (\alpha, \beta) )
     -
     \pmb\eta_{\alpha_2} ( U^{d,\alpha_2} (\alpha, \beta) )
     =
      c B A
     =
         c  N( U^{d, \alpha_1} (\alpha, \beta) )
\]
where
\[
  c = \frac{2 r'}{\sqrt{ r'^2 + r^2 }}
    = 2  \sin 2(\alpha-\alpha_0)
    .
\]

  To prove 5., it is enough to show that
  the tangent to $U$ at $U(\alpha, \beta)$ is the plane spanned by orthonormal base
$ \{ N( \frac {\partial U}{\partial \alpha} (\alpha,\beta) ) ,
     N( \frac {\partial U}{\partial \beta} (\alpha,\beta) ) \}$
where
\begin{gather}
  \label{eq:tangent-almost-radial-lbasic}
     \left\|
     \NewNewBsmall
     \pm
     \NewNewEsmall
     N\left( \tfrac {\partial U}{\partial \alpha}
     (\alpha,\beta)
     \right)
     -
     N(U
     (\alpha,\beta)
     )
     \right\|
     \le
     2 \cos 2(\alpha-\alpha_0)
\\
  \label{eq:tangent-perpendicular-lbasic}
     N\left( \tfrac {\partial U}{\partial \beta}
     (\alpha,\beta)
     \right)
   =
   J(0,1) U(\alpha,\beta)
   .
\end{gather}

     \NewNewBsmall
     Here $\pm$ denotes the sign of $\alpha-\alpha_0$.
     \NewNewEsmall
   The two vectors are orthogonal by \eqref{eq:g12}.
     \NewNewBsmall
                   By \eqref{eq:tangent-rad} and
                   \eqref{eq:r-formula}--\eqref{eq:sqrt-formula}
                   \[
                     N\left( \tfrac {\partial U}{\partial \alpha}
                     \right)
                    =
                    \sin 2(\alpha-\alpha_0) BA +
                    \cos 2(\alpha-\alpha_0) BA'
                    .
                   \]
   Using $N(U) = B A$
   and
   $\| B A \| = 1 = \| B A' \|$
   we get
                   \[
                             \|
                             \pm
                             N( \tfrac {\partial U}{\partial \alpha} )
                             - N(U)
                             \|
                          \le  (1- \left| \sin 2(\alpha-\alpha_0) \right| ) + \cos  2(\alpha-\alpha_0)
                          \le 2 \cos  2(\alpha-\alpha_0)
                   \]
     \NewNewEsmall
     which is \eqref{eq:tangent-almost-radial-lbasic}.
Furthermore we have
\[
     N\left( \tfrac {\partial U}{\partial \beta}
     (\alpha,\beta)
     \right)
   =
     \frac {1}{\sqrt{ g_{22}}}
     \frac {\partial U
            (\alpha,\beta)
             }{\partial \beta}
   \overset{\eqref{eq:Uder-beta}}{ = }
   B' A
   \overset{\eqref{eq:B'}}{ = }
   J(0,1) B A
   =
   J(0,1) U(\alpha,\beta)
\]
which is \eqref{eq:tangent-perpendicular-lbasic}.

  6.
  The mass formula is directly obtained by integration.
  Since $g_{12}=0$, the $2$-volume element has a simple form.
\begin{multline*}
      \mass (V^{d,\alpha_0}_{t_1,t_2})
   =
      \Hausd^2
      S_{t_1,t_2}
   =
      \int_{[t_1, t_2]} \intd\alpha\
         \int_{[0,2\pi]} \intd\beta\
            \sqrt{ g_{11} \, g_{22} }
\\
   =
      2 \pi
      \int_{[t_1, t_2]} \intd\alpha\
          r \sqrt{ r'^2 + r^2 }
   \overset{\eqref{eq:sqrt-formula}}{
   =
   }
      2 \pi
      \int_{[t_1, t_2]} \intd\alpha\
           d  \cos ^{-2} 2(\alpha-\alpha_0)
\\
   =
      \pi d (\tan 2(t_2-\alpha_0) - \tan 2(t_1-\alpha_0) )
\end{multline*}
      If $\alpha_0 \notin [ t_1, t_2 ]$ then
      $\sgn \tan2(t_2-\alpha_0) = \sgn\tan2(t_1-\alpha_0)$
      and
      \[
         d  \, | \tan 2(t_i-\alpha_0) |
        =   \sqrt { \tfrac {d^2}{\cos^2 2(t_i-\alpha_0)} - d^2 }
        =     \sqrt {        r(t_i)^4       - d^2 }
      \]
      since
      $
          r(t_i)^2 = d / \cos 2(t_i -\alpha_0)
      $.
      This gives the mass in the form
      \[
          \pi \left| \sqrt { r(t_2)^4 - d^2 } - \sqrt { r(t_1)^4 - d^2 } \right|
          .
      \]
      The expression that contains $\rho$ is obtained by the Mean value theorem applied
      to function $q \mapsto \sqrt{ q^2 - d^2 }$
      on interval
      $[r(t_1)^2, r(t_2)^2]$
      or
      $[r(t_2)^2, r(t_1)^2]$.
      (Thus $\rho$ depends on $d$, $r(t_1)$ and $r(t_2)$
      but, naturally, not on $\alpha_0$.)
      Since obviously
      $S _ {t_1, t_2} \cap A _ {r(s_1)} ^ {r(s_2)} = S_{s_1,s_2} $
      we have
      \[
           V^{d,\alpha_0}_{t_1,t_2} ( G_2 ( A _ {r(s_1)} ^ {r(s_2)} ) )
        =
      \mass (V^{d,\alpha_0}_{s_1,s_2})
      .
      \]
\qed
\end{proof}
\end{theopargself}

\subsection{Mini-layer. Details about branching.}
\NewMovedHereSmallB
For $\rho>0$ and $\alpha \in \R$, denote
\NewChangeSmall
 \begin{equation}\label{eq:hhhhh}
     K(\rho, \alpha)
       =
          \{
             \rho (
                     \cos \alpha \cos \beta,
                     \sin \alpha \cos \beta,
                     \cos \alpha \sin \beta,
                     \sin \alpha \sin \beta
                  )
           \setcolon
              \beta \in \R
           \}
 \end{equation}
\NewMovedHereSmallE
\NewBrefsugSmall
    which is the circle of radius $\varrho$ parameterized by $\beta$ and oriented in $\R^4$
    by the choice of $\alpha$.
\NewErefsugSmall

   From the ring varifolds we construct
   two types of (mini-layer) varifolds:
   $V_1$ branching
\NewChangeSmall
   outwards
   and
   $V_2$ branching
\NewChangeSmall
   inwards
   (Figure~\ref{fig:mini-layers}).
   That is, $\vari V_1$ is supported on a number of circles
   \NewBrefsugSmall
   of type $K(\varrho,\alpha)$
   \NewErefsugSmall
   of
\NewChangeSmall
   smaller
   radius
   and twice as much circles
   \NewBrefsugSmall
   $K(\varrho,\alpha)$
   \NewErefsugSmall
   of
\NewChangeSmall
   larger
   radius.
   We carefully compute the densities of $\vari V_i$ on the circles
   and record the mass distribution.
   \NewBrefsugSmall
   The densities of $\vari V_i$ (see \eqref{eq:V1-layer-vari},
   \eqref{eq:V2-layer-vari})
   determine four constants denoted by $C,c$ with decorations.
   $C$ is the density on larger circles $K(r_2, \cdot)$, $c$ on
   smaller circles $K(r_1, \cdot)$.
   Tilde marks the ones related to $\vari V_1$ as opposed to $\vari V_2$.
   (The relation $C_{k,\gamma}= \tilde c_{k,\gamma}$ is best regarded as just a coincidence
   although it appears naturally from the manipulations with the objects and numbers.)
   \NewErefsugSmall

\def\DOpicture#1#2#3#4#5#6#7{\relax
  \def\RR{#1}\relax
  \def\step{#2}\relax
  \def\Rmult{#3}\relax
  \def\start{#4}\relax
  \def\stopA{#5}\relax
  \def\stopB{#6}\relax
  \def\forDATA{#7}\relax
  \def\colorSegments{black} %
  \def\colorCurved{black} %
  \pgfmathparse{0.82*exp(ln(\Rmult)*0.5)}\relax
  \xdef\RmultQQQ{\pgfmathresult}\relax
  \message{--tikzpicture--}\relax
  \begin{tikzpicture}[x=\SCALE,y=\SCALE, line cap=round]
  \relax
  \clip
  (-2,-2) rectangle (36.9,36);
  \relax
  \def\thickn{4pt}
  \forDATA
  {
  \pgfmathparse{\start+\step}\xdef\startstep{\pgfmathresult}
  \foreach \angle in {\start, \startstep,..., \stopA}
  {
  \draw [arrows=space-, line width={\thickness/2}, color=\colorSegments]
    (\angle:\RR) -- (\angle:{\RR*\Rmult});
  }
  \foreach \angle in {\start, \startstep,..., \stopB}
  {
  \draw [line width={\thickness/3}, color=\colorCurved]
    (\angle:{\RR/1}) .. controls ({\angle+\step/4}:{\RR*\RmultQQQ}) .. ({\angle+\step/2}:{\RR*\Rmult})
    ({\angle+\step}:{\RR/1}) .. controls ({\angle+3*\step/4}:{\RR*\RmultQQQ}) .. ({\angle+\step/2}:{\RR*\Rmult})
    ;
  }
  \message{^^J----- \RR : \step : \Rmult : \RmultQQQ ------- }
  \xdef\RR{\RR*\Rmult}
  \pgfmathparse{\step/2}
  \xdef\step{\pgfmathresult}
  }
  \end{tikzpicture}
}

  \begin{figure}[hbt]
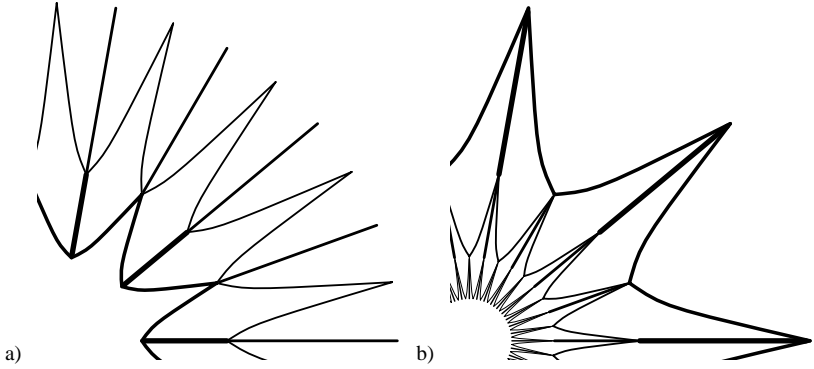

  \begin{center}
  \def\SCALE{56mm/45/1}
  \beginpgfgraphicnamed{varif2012-fig-minilayers-v2}
  \relax
   a) \DOpicture{9}{40}{2}{-40}{130}{130}{\foreach \thickness in {{\thickn}, {\thickn/1.9}}}
   b) \DOpicture{36}{40}{0.5}{-40}{130}{130}{\foreach \thickness in {\thickn, {\thickn/1.9}, {\thickn/1.9/1.9}}}\qquad
   \endpgfgraphicnamed
  \caption{\relax
          a) Two mini-layers branching outwards.
          b) Three mini-layers branching inwards.\relax
          \label{fig:mini-layers}}
  \end{center}
  \message{--figure--finito--.}
  \end{figure}

\begin{lemma}\label{l:basic_layer}
  \NewMovedSmallL
    Let $k\in \N$, $k>20$ and
    $\gamma \in (\pi/8,\pi/4)$
    be fixed.
    Let
\begin{align*}
    \sigma  &=
    \sigma_{k,\gamma} =
            \sqrt{
                  \frac{
                      \cos 2 \gamma
                  }{
                      \cos 2( \gamma - \pi / k )
                  }
             }
      \in (0,1)
      ,
\\
     \varepsilon &=  2 \cos 2( \gamma - \pi/k)
\end{align*}
   and
\begin{align*}
        \tilde
        C_ {k, \gamma}
              &=
                        4 \sin ( 2\gamma )
\\
        \tilde
        c_ {k, \gamma}
              &=
                2 \sin ( 2(\gamma - \pi /k ) )
                        +
                        2 \sin ( 2\gamma )
               =
                        4
                        \sin ( 2\gamma - \pi /k )
                        \cos (  \pi /k )
\\
        C_ {k, \gamma}
              &=
                        \tilde
                        c_ {k, \gamma}
\\
        c_ {k, \gamma}
              &=
                4 \sin ( 2(\gamma - \pi /k ) )
        .
\end{align*}

   Then,
   for every $r_2 > 0$ and for $r_1 = \sigma r_2$,
   there are rectifiable $2$-varifolds
   $V_1 = V_1^{r_1, r_2, k, \gamma}$,
   $V_2 = V_2^{r_1, r_2, k, \gamma}$
   in $\R^4$
   \NewBrefsugSmall
   \textup(see \eqref{eq:V1-layer} and \eqref{eq:V2-layer} for the definition\textup)
   \NewErefsugSmall
   such that
   $\spt \mu_{V_i} \subset A_{r_1}^{r_2}$,
   \begin{equation}\label{eq:Vi-spt}
      \spt
      V_i
      \subset
              G_2(A_{r_1}^{r_2})
              \cap
              G_{\RadAndOneThree}^{\,\varepsilon}
       ,
   \end{equation}
   \begin{align}\label{eq:mass-l-basic_layer}
                       4 \sin 2(\gamma-\pi/k) \cdot \pi ( (s_2)^2 - (s_1)^2 )
                   &\le
   \mass(V_i \mrest G_2( A  _ {s_1} ^ {s_2}) )
   =
   V_i ( G_2 ( A _ {s_1} ^ {s_2}))
\\
\notag
   &\le
       \frac{4}{
                      \sin 2( \gamma - \pi / k )
            }
      \pi ( (s_2)^2 - (s_1)^2 )
   \end{align}
   whenever $r_1 \le s_1 < s_2 \le r_2$,
   and
\begin{align}
  \label{eq:V1-layer-vari}
    \vari V_1 (X)
           &=
               \tilde C_ {k, \gamma}
                \,
                \Mucircles_ {r_2, 2k} (X)
             -
               \tilde c_ {k, \gamma}
                \,
                \Mucircles_ {r_1, k} (X)
 \\
 \label{eq:V2-layer-vari}
    \vari V_2 (X)
           &=
                C_ {k, \gamma}
                \,
                \Mucircles_ {r_2, k} (X)
             -
                c_ {k, \gamma}
                \,
                \Mucircles_ {r_1, 2k} (X)
\end{align}
   where
\textup(denoting $N(x) = x / \| x \|$
\NewBsmall
and $K(\varrho,\alpha)$ as in \eqref{eq:hhhhh}\relax
\NewEsmall
\textup)
\begin{equation}\label{eq:Bboundary}
        \Mucircles_ {\rho, k} (X)
              =
                \frac 1k
                \,
                \sum_{i=1}^k
                \int
                     _ { K( \rho, 2i \pi /k) }
                     X \cdot N \intd \Hausd^{2}
                     .
\end{equation}
\end{lemma}
\begin{proof}
   Let $d>0$ be such that
\begin{align}\label{eq:r1-layer}
    r_2  &=  \sqrt{ d \, / \cos 2 \gamma  }
\\
\label{eq:r2-layer}
    r_1  &=  \sigma r_2 = \sqrt{ d \, / \cos 2( \gamma - \pi / k ) }
    .
\end{align}

    Let
    $V ^ {d,\alpha_0} _ {t_1,t_2}$
    be as in Lemma~\ref{l:basic}, cf.~\eqref{eq:ring-def}
    ($\alpha_0 \in \R$ and $\alpha_0 - \pi/4 < t_1 < t_2 < \alpha_0 + \pi/4$).

    Let
\begin{align}
    \label{eq:V01-layer}
    V_{01}
     &=
             \sum_{i=1}^k
             \left (
               V ^ {d, \ ( 2i + 1 ) \pi /k - \gamma } _ { 2i  \pi /k  , \ ( 2i + 1 ) \pi /k }
               +
               V ^ {d, \ ( 2i + 1 ) \pi /k + \gamma } _ { ( 2i + 1 ) \pi /k , \ ( 2i + 2 ) \pi /k  }
             \right )
        ,
  \\
    \label{eq:V02-layer}
    V_{02}
     &=
             \sum_{i=1}^k
             \left (
               V ^ {d, \ 2i \pi /k - \gamma } _ { ( 2i - 1 ) \pi /k  , \ 2i \pi /k }
               +
               V ^ {d, \ 2i \pi /k + \gamma } _ {  2i \pi /k , \ ( 2i + 1 ) \pi /k  }
             \right )
\end{align}
  (see Figure~\ref{fig:mini-layers-withgrey}).
  \NewBrefsugSmall
  \NewNewBsmall
     The parameters of all $V^{d,\alpha_0}_{t_1,t_2}$ in \eqref{eq:V01-layer}, \eqref{eq:V02-layer}
     are so chosen that $r(t_i)=r^{d,\alpha_0}(t_i)$ from \eqref{eq:ralpha} attain
     exactly the values $r_1$, $r_2$, cf.\ \eqref{eq:r1-layer}, \eqref{eq:r2-layer}.
     Therefore all $V^{d,\alpha_0}_{t_1,t_2}$ are supported by $A_{r_1}^{r_2}$.
  \NewNewEsmall
  The difference between $V_{01}$ and $V_{02}$ is just a rotation which allows
  (together with $V_{00}$ below)
  proper alignment with
  the  neighbouring mini-layers as in Figure~\ref{fig:mini-layers-withgrey}.
  \NewErefsugSmall
  \begin{figure}[bt]
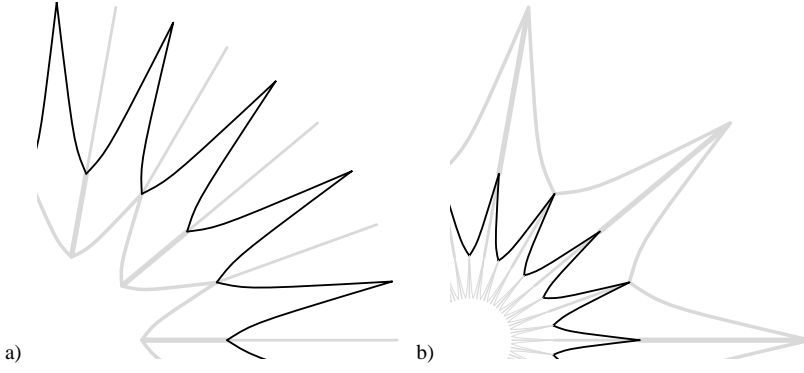

  \begin{center}
  \def\SCALE{56mm/45/1}
  \beginpgfgraphicnamed{varif2012-fig-minilayers-withgrey-v2}
  \relax
   \def\ggrey{black!15!white}
   a) \DOpicture{9}{40}{2}{-40}{130}{130}{\def\colorSegments{\ggrey}\foreach \thickness/\colorCurved in {{\thickn}/\ggrey, {\thickn/1.9}/black}}
   b) \DOpicture{36}{40}{0.5}{-40}{130}{130}{\def\colorSegments{\ggrey}\foreach \thickness/\colorCurved in {\thickn/\ggrey, {\thickn/1.9}/black, {\thickn/1.9/1.9}/\ggrey}}\qquad
   \endpgfgraphicnamed
  \caption{\relax
         The varifolds
          a) $V_{01}$ and
          b) $V_{02}$
          \eqref{eq:V01-layer}, \eqref{eq:V02-layer}
          on the gray background of Figure~\ref{fig:mini-layers}.
          (Note that a) and b) are not drawn and will not be used at the same scale.)
          \NewBrefsugSmall
          We are patching rings by actually patching pieces of the planar curve $r(\alpha)$ from \eqref{eq:ralpha}.
          The result is then rotated in $\R^4$ as indicated in by $\beta$ in \eqref{eq:UfromF}.
          The radial segments (they create planar annuli by the rotation)
          will be added later with (a proper density), see $V_{00}$ in \eqref{eq:V00}
          and \eqref{eq:V1-layer}, \eqref{eq:V2-layer}.\relax
          \NewErefsugSmall
          \label{fig:mini-layers-withgrey}}
  \end{center}
  \message{--figure--finito--.}
  \end{figure}
    \relax
    From \eqref{eq:ring-vari}, \eqref{eq:UU} and \eqref{eq:two-rings},
    we have
\begin{align}
   \label{eq:V01-layer-vari}
    \vari V_{01} (X)
           =
            {}
             &
                2 \sin ( 2\gamma )
                \,
                \sum_{i=1}^k
                \int
                     _ { K( r_2, ( 2i + 1 ) \pi /k) }
                     X \cdot N \intd \Hausd^{2}
                     \\
                     \notag
             &
             -
                2 \sin ( 2(\gamma - \pi /k ) )
                \,
                \sum_{i=1}^k
                \int
                     _ { K( r_1, ( 2i + 2 ) \pi /k) }
                     X \cdot N \intd \Hausd^{2}
             ,
\displaybreak[0]
\\
   \label{eq:V02-layer-vari}
    \vari V_{02} (X)
           =
            {}
             &
                2 \sin ( 2\gamma )
                \,
                \sum_{i=1}^k
                \int
                     _ { K( r_2, 2i \pi /k) }
                     X \cdot N \intd \Hausd^{2}
                     \\
                     \notag
             &
             -
                2 \sin ( 2(\gamma - \pi /k ) )
                \,
                \sum_{i=1}^k
                \int
                     _ { K( r_1, ( 2i + 1 ) \pi /k) }
                     X \cdot N \intd \Hausd^{2}
             .
\end{align}

    Let
\begin{equation}\label{eq:V00}
           V_{00}
           =
           V_{00}^{r_1, r_2, k}
           =
           \frac 1k
           \,
           \sum_{i=1}^k
              V_ {
                 1
                 ,
                 \,
                 r_1, r_2,
                 \,
                 (\cos 2i \pi /k, \, \sin 2i \pi /k)
                 }
\end{equation}
    where
     $
   V_ {1,r_1, r_2,(a,b)}
   =
   V_ {\Hausd^2 \mrest ( \Span \{ ae_1 + be_2 ,\, ae_3 + be_4 \} \cap A_{r_1}^{r_2}(\R^4) ) }
     $
        (see
        also \eqref{eq:V1rsab}, \eqref{eq:V2rscd}).
    Since
    $\Span \{ ae_1 + be_2 ,\, ae_3 + be_4 \}$ is a linear space invariant under multiplication
    by $\JOneThree$ (see~\eqref{eq:J01}),
    we have
\begin{equation}\label{eq:V00-spt}
  \spt V_{00}
  \subset
              G_2( A_{r_1}^{r_2} )
              \cap
              G_{\RadAndOneThree}^{\,0}
    \   .
\end{equation}
    Furthermore (cf.~\eqref{eq:var-smooth2} or Section~\ref{s:non-rect}),
\begin{align}
\notag
    \vari V_{00} (X)
           & =
           \frac 1k
           \,
           \sum_{i=1}^k
           \left(
                \int
                     _ { K( r_2,  2i \pi /k) }
                     X \cdot N \intd \Hausd^{2}
             -
                \int
                     _ { K( r_1,  2i \pi /k) }
                     X \cdot N \intd \Hausd^{2}
           \right)
\\
\label{eq:V00-vari-BB}
           & =
                 \Mucircles_{r_2, k} (X) - \Mucircles_{r_1,k} (X)
             .
\end{align}

    Let
\begin{align}
     \label{eq:V1-layer}
       V_1
          &=
           \frac 1k
           \,
           V_{01}
           +
           2 \sin ( 2\gamma )
           \,
           V_{00}
 \\
     \label{eq:V2-layer}
       V_2
          &=
           \frac 1k
           \,
           V_{02}
           +
           2 \sin ( 2(\gamma - \pi /k ) )
           \,
           V_{00}
           .
\end{align}
   Then the first variation of $V_1$ and $V_2$ is exactly as stated
   in \eqref{eq:V1-layer-vari}, \eqref{eq:V2-layer-vari}.
   Note that
   \[\
      \spt_{V_{01}}
      \cup
      \spt_{V_{02}}
      \subset
              G_{\RadAndOneThree}^{\,\varepsilon}
   \]
   by Lemma~\ref{l:basic}, 5.,
   and the same is true for planar varifold $V_{00}$,
   so also for
   $V_1$ and $V_2$.

   Let $ r_1 \le s_1 < s_2 \le r_2$.
   We claim that
   \begin{equation}\label{eq:masses}
    \mass ( V_{01} \mrest G_2( A _ {s_1} ^ {s_2}) )
    =
    \mass ( V_{02} \mrest G_2( A _ {s_1} ^ {s_2}) )
    =
    c 2 k \pi ( (s_2)^2 - (s_1)^2 )
   \end{equation}
   where
   \begin{equation}\label{eq:c-estimate}
                               \frac{1}{\sin 2\gamma}
                            \le
       c
    \le
       \frac{1}{
                      \sin 2( \gamma - \pi / k )
            }
      .
   \end{equation}

   Indeed,
   if $\rho  = \rho (d, s_1 , s_2) \in [s_1, s_1] \subset [r_1, r_2]$
   is as in
   Lemma~\ref{l:basic}, 6.,
   then \eqref{eq:masses} holds true with
   \[
       c
    =
       \frac{1}{\sqrt{1-\frac{d^2}{\rho^4}}}
    \le
       \frac{1}{\sqrt{1-\frac{d^2}{(r_1)^4}}}
    =
       \frac{1}{\sqrt{1-
                      \cos ^2 2( \gamma - \pi / k )
            }}
    =
       \frac{1}{
                      \sin  2( \gamma - \pi / k )
            }
       .
   \]
                                   On the other hand,
                                   \[
                                       c
                                    \ge
                                       \frac{1}{\sqrt{1-\frac{d^2}{(r_2)^4}}}
                                    =
                                       \frac{1}{\sqrt{1-\cos^2 2\gamma}}
                                    =
                                       \frac{1}{\sin 2\gamma}
                                       .
                                   \]

   We have exactly
   $
      \mass (
       V_{00} \mrest G_2 (A _ {s_1} ^ {s_2} )
       )
       =
       \pi ( (s_2)^2 - (s_1)^2 )
   $.
   Combining that with \eqref{eq:c-estimate}, we get
   (for $i = 1,2$)
   \begin{align*}
                                       4 \sin(2(\gamma-\pi/k)) \cdot \pi ( (s_2)^2 - (s_1)^2 )
                                   &\le
                                     \left(
                                       \tfrac{2}{\sin 2\gamma}
                                       + 2 \sin(2(\gamma- \pi/k))
                                     \right)
                                     \pi ( (s_2)^2 - (s_1)^2 )
                              \\
                                   &\le
   \mass(V_i  \mrest G_2 (A _ {s_1} ^ {s_2} ) )
   \\
   &\le
     \left(
       \tfrac{2}{
                      \sin 2( \gamma - \pi / k )
            }
      + 2 \sin(2\gamma)
     \right)
     \pi ( (s_2)^2 - (s_1)^2 )
 \\
   &\le
       \tfrac{4}{
                      \sin 2( \gamma - \pi / k )
            }
      \pi ( (s_2)^2 - (s_1)^2 )
      .
   \end{align*}
   which is \eqref{eq:mass-l-basic_layer}.
\qed
\end{proof}

\subsection{Layers.}
   Recall now that $F$ is defined by~\eqref{eq:F-def}
   (see also \eqref{eq:F-radial} and \eqref{eq:F-exchange}).
\begin{lemma}\label{l:system-A}
     If $0< R_1 < R_2 < R_3 < R_4 <\infty$
     and $\varepsilon > 0$
     then there is
     $ c \in ( 1-\varepsilon, 1) $
     and
     a rectifiable 2-varifold $V$ with
     $\spt \mu_V \subset A_{ R_1 }^{ R_4 }$,
\begin{align}
\label{eq:sptV}
       \spt V
    &\subset
              G_2(
                A_{ R_1 }^{ R_2 }
                \cup
                A_{ R_3 }^{ R_4 }
               )
              \cap
              G_{\RadAndOneThree}^{\,\varepsilon}
\\
\notag
    &\phantom{{}\subset{}}
         {\cup}\  %
              G_2(
                A_{ R_2 }^{ R_3 }
               )
              \cap
              G_{\RadAndOneThree}^{\,0}
\\
\label{eq:sptV-simple}
    &\subset
              G_2(A_{ R_1 }^{ R_4 })
              \cap
              G_{\RadAndOneThree}^{\,\varepsilon}
   ,
\end{align}
\begin{equation}\label{eq:V-mass}
     ( 1 - \varepsilon ) \pi ( (s_2)^2  -  (s_1)^2 )
     <
     \mass(V \mrest G_2 ( A _ {s_1} ^ {s_2} ) )
     <
     ( 1 + \varepsilon ) \pi ( (s_2)^2  -  (s_1)^2 )
\end{equation}
   whenever $R_1 \le s_1 < s_2 \le R_4$,
     and
\begin{equation}\label{eq:layer-V-vari}
   \vari V (X)
 =
     \Mucircles_{R_4, \infty} (X)
 -
   c \Mucircles_{R_1, \infty} (X)
\end{equation}
      where \textup(with $N(x)=x \, / \left\|x\right\|$\textup)
\begin{equation}\label{eq:Binfty}
      \Mucircles_{\rho, \infty} (X)
   =
        \int\limits
         _{F(( \rho  \cdot S_1(\R^2))\times S_1(\R^2))}  X\cdot N
                     \ \ \intd\ \frac{\Hausd^2}{ 2 \pi \rho }
   .
\end{equation}
\end{lemma}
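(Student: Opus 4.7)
The plan is to construct $V$ by gluing three regions: in the outer annulus $A_{R_3}^{R_4}$ a countable stack of $V_1$-type mini-layers from Lemma~\ref{l:basic_layer} (which, going outward, doubles the number of discrete boundary circles per layer); in the middle annulus $A_{R_2}^{R_3}$ the planar averaged-planes varifold $V_{00}^{R_2, R_3, k_1}$ from \eqref{eq:V00}; and in the inner annulus $A_{R_1}^{R_2}$ an analogous countable stack of $V_2$-type mini-layers branching inward. By matching the circle counts and scalings across adjacent mini-layers, the discrete boundary terms at all interior radii cancel pairwise, and the only surviving contributions to $\vari V$ are at the accumulation radii $R_1$ and $R_4$, where a weak-limit argument turns the discrete averages $B_{\rho,k}$ into the continuous measures $B_{R_1,\infty}$ and $B_{R_4,\infty}$.

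Fix $\gamma\in(\pi/8,\pi/4)$ near $\pi/4$ and a large integer $k_1$, both depending on $\varepsilon$ and the $R_i$'s. Set $k_j=k'_j:=2^{j-1}k_1$ and recursively $\rho_0:=R_3$, $\rho_j:=\rho_{j-1}/\sigma_{k_j,\gamma}$, $\rho'_0:=R_2$, $\rho'_j:=\rho'_{j-1}\sigma_{k'_j,\gamma}$. Since $1-\sigma_{k,\gamma}=O(1/k)$ as $k\to\infty$, both sequences converge to finite limits $\rho_\infty,\rho'_\infty$; by tuning $\gamma$ and $k_1$ (and, if necessary, slightly modifying the outermost layer's $d$ parameter to absorb a residual mismatch) one arranges $\rho_\infty=R_4$ and $\rho'_\infty=R_1$. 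Define
\begin{equation*}
   V := \sum_{j=1}^\infty \lambda_j\, V_1^{\rho_{j-1},\rho_j,k_j,\gamma}
      + \lambda_{\mathrm{mid}}\, V_{00}^{R_2,R_3,k_1}
      + \sum_{j=1}^\infty \lambda'_j\, V_2^{\rho'_j,\rho'_{j-1},k'_j,\gamma}.
\end{equation*}
Using \eqref{eq:V1-layer-vari}, \eqref{eq:V2-layer-vari}, \eqref{eq:V00-vari-BB} together with the identity $B_{\rho_j,2k_j}=B_{\rho_j,k_{j+1}}$ (forced by $k_{j+1}=2k_j$), the interior cancellation conditions are $\lambda_j\tilde C_{k_j,\gamma}=\lambda_{j+1}\tilde c_{k_{j+1},\gamma}$, $\lambda_1\tilde c_{k_1,\gamma}=\lambda_{\mathrm{mid}}=\lambda'_1 C_{k_1,\gamma}$ (forcing $\lambda'_1=\lambda_1$ since $C_{k,\gamma}=\tilde c_{k,\gamma}$), and $\lambda'_j c_{k'_j,\gamma}=\lambda'_{j+1}C_{k'_{j+1},\gamma}$. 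Each ratio $\lambda_{j+1}/\lambda_j$ and $\lambda'_{j+1}/\lambda'_j$ is $1+O(\cot 2\gamma/k_j)$, so both infinite products converge. Normalize by $\lambda_\infty=1/(4\sin 2\gamma)$; the inner coefficient is $c:=4\sin 2\gamma\cdot\lambda'_\infty$, which a direct computation yields as $c=1-O(\cot 2\gamma/k_1)$, hence $c\in(1-\varepsilon,1)$ for $k_1$ large enough.

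To verify \eqref{eq:layer-V-vari}, I would pass to the limit on the partial sums $V^{(n)}$. After the designed cancellations,
\begin{equation*}
   \vari V^{(n)}(X)
    = \lambda_n\tilde C_{k_n,\gamma}\,B_{\rho_n,2k_n}(X)
    - \lambda'_n c_{k'_n,\gamma}\,B_{\rho'_n,2k'_n}(X),
\end{equation*}
with scalar prefactors tending to $1$ and $c$. The discrete averages $B_{\rho_n,2k_n}(X)$ of \eqref{eq:Bboundary} tend to the torus integral $B_{R_4,\infty}(X)$ of \eqref{eq:Binfty} for every continuous compactly supported $X$ — this is a standard Riemann-sum convergence on the two-dimensional surface $F(\rho S_1(\R^2)\times S_1(\R^2))$ as $\rho_n\to R_4$ and $2k_n\to\infty$, and similarly at $R_1$. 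A uniform mass bound (derived below) guarantees $V^{(n)}\to V$ as Radon measures, so $\vari V(X)=\lim\vari V^{(n)}(X)$. The mass estimate \eqref{eq:V-mass} then follows directly from \eqref{eq:mass-l-basic_layer}: for $\gamma$ near $\pi/4$ the bounds $4\sin 2(\gamma-\pi/k_j)$ and $4/\sin 2(\gamma-\pi/k_j)$ both lie in $(4(1-\varepsilon),4(1+\varepsilon))$, and after scaling by $\lambda_j\in(\frac{1-\varepsilon}{4\sin 2\gamma},\frac{1+\varepsilon}{4\sin 2\gamma})$ each mini-layer contributes $(1\pm O(\varepsilon))\pi\cdot(\text{annulus area})$; summing over $j$ and adding the middle piece (whose mass equals $\lambda_{\mathrm{mid}}\pi(R_3^2-R_2^2)$ and lies within the same tolerance) yields \eqref{eq:V-mass}. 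The support conditions \eqref{eq:sptV}, \eqref{eq:sptV-simple} are immediate from \eqref{eq:Vi-spt} for the mini-layers and \eqref{eq:V00-spt} for the middle.

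The main obstacle is the simultaneous quantitative calibration of $\gamma$ and $k_1$ so that (a) the telescoping radii hit $\rho_\infty=R_4$, $\rho'_\infty=R_1$ exactly (rather than only up to a residual which one must then absorb), (b) the product of ratios produces $c\in(1-\varepsilon,1)$, and (c) the per-layer mass stays within $1\pm\varepsilon$ of the Lebesgue target on every sub-annulus; these three constraints pull $\gamma$ in different directions and require care. A secondary technical point is making rigorous the passage $\vari V=\lim\vari V^{(n)}$, i.e., proving that the tail of the mini-layer series contributes negligibly both in mass and in first variation so that the discrete boundaries at $\rho_n,\rho'_n$ genuinely coalesce into the continuous $B_{R_4,\infty}$ and $B_{R_1,\infty}$.
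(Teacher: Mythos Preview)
Your overall architecture is the same as the paper's: stack mini-layers from Lemma~\ref{l:basic_layer} on both sides of a planar piece $V_{00}$, choose scalar weights so the discrete boundary terms $B_{\rho,k}$ telescope, and pass to the limit so the surviving boundary becomes the torus integrals $B_{R_4,\infty}$ and $B_{R_1,\infty}$. The mass and support verifications you sketch are also the right ones.

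The one place where you diverge from the paper---and where your self-identified ``main obstacle'' lives---is the calibration of the radii. You anchor the planar middle exactly at $[R_2,R_3]$ and then need the two telescoping products $\prod_j\sigma_{k_j,\gamma}^{-1}$ and $\prod_j\sigma_{k'_j,\gamma}$ to hit $R_4/R_3$ and $R_1/R_2$ simultaneously. Since you use the \emph{same} sequence $k'_j=k_j$ and a common $\gamma$, those two ratios coincide, forcing the extraneous constraint $R_4/R_3=R_2/R_1$. Your proposed fix (``modify the outermost layer's $d$ parameter'') is not well-defined within the framework of Lemma~\ref{l:basic_layer}, where $d$ is already determined by $r_2$ and $\gamma$.

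The paper sidesteps this entirely by \emph{not} pinning the planar piece to $[R_2,R_3]$. It first chooses the mini-layer parameters (in fact with $\gamma^{(n)}=\pi/4-\pi/\sqrt{k^{(n)}}$ varying, though a fixed $\gamma$ as you propose would also work) so that $\sigma:=\prod_{n\ge n_0}\sigma_{k^{(n)},\gamma^{(n)}}\in(\max(R_1/R_2,\,R_3/R_4),\,1)$, and then places the planar middle on $[r^{(n_0)},R^{(n_0)}]:=[R_1/\sigma,\,\sigma R_4]$, which automatically \emph{contains} $[R_2,R_3]$. The outer stack then runs from $R^{(n_0)}$ to $R_4$ and the inner from $r^{(n_0)}$ down to $R_1$, with no residual mismatch to absorb. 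The refined support condition \eqref{eq:sptV} still holds because on $A_{R_2}^{R_3}\subset A_{r^{(n_0)}}^{R^{(n_0)}}$ only the planar $V_{00}$ contributes. With this one change your argument goes through as written; the ``secondary technical point'' about $\vari V=\lim\vari V^{(n)}$ is handled exactly as you say, via $\mass(V-V^{(n)})\to 0$.
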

\NewB
\begin{remark}
        Note that $\Mucircles_{\rho, \infty}$ is a vector measure on the scaled Clifford torus
        $F(( \rho  \cdot S_1(\R^2))\times S_1(\R^2))$ that is
        \NewBrefsugSmall
        {\em uniformly} diffuse
        \NewErefsugSmall
        in the sense that the associated total variation measure
        is just a constant multiple of the Hausdorff measure.
        This comes from the properties of the Clifford torus and from how uniformly
        $\Mucircles_{\rho,k}$ from \eqref{eq:Bboundary} is distributed on the ``parallel'' circles.
        (For their weak convergence see also \eqref{eq:Bconverge} below.)

        This will be important later for compatibility
        on the interface,
        when $V$ from Lemma \ref{l:system-A}
        is used together with similar but different varifold $V$ from Lemma~\ref{l:system-B}.
  \qqeedd
\end{remark}

        Before giving the formal proof of Lemma~\ref{l:system-A}, we explain how the varifold $V$ (the layer) is constructed.

        The space between $R_1$ and $r^{(n_0)} \in (R_1, R_2]$
        is occupied by an infinite sequence of varifolds (mini-layers)
        from Lemma~\ref{l:basic_layer}
        and Figure~\ref{fig:mini-layers}b)
        that are branching towards the inner interface which is
        the Clifford torus at radius $R_1$.
        Here $n_0$ is a technical index
        (to be explained later)
        and $r^{(n_0)}$ is chosen at our convenience for using Lemma~\ref{l:basic_layer}.
        The mini-layers are indexed by $n\ge n_0$ and each of them lives between suitably
        defined radii $r^{(n+1)}$ and $r^{(n)}$
        where $r^{(n+1)} < r^{(n)}$.
        The connections at radii $r^{(n)}$ ($n>n_0$)
        (i.e., the branching) can also be seen in Figure~\ref{fig:mini-layers}b).

        Likewise, the space between $R^{(n_0)} \in [R_3, R_4)$ and $R_4$
        is occupied by an infinite sequence of mini-layers from
        Figure~\ref{fig:mini-layers}a)
        that are branching towards the {\em outer} interface which is
        the Clifford torus at radius $R_4$.
        Each of them lives between suitably
        defined radii $R^{(n)}$ and $R^{(n+1)}$
        where $R^{(n)}< R^{(n+1)}$.

        The space between $r^{(n_0)}$ and $R^{(n_0)}$ is bridged by a varifold supported on a finite number of
        planar annuli,
        which is the term $c_1 V_{00}$ in the definition of $V$ below.
        (I~have
        received a question about the purpose of $c_1 V_{00}$. Obviously, the space between $r^{(n_0)}$ and $R^{(n_0)}$
        should not be left empty if we wish to have a stationary varifold.
        In principle it would be possible to assume $r^{(n_0)} = R_2 = R_3 = R^{(n_0)}$
        and avoid the term $c_1 V_{00}$ but we have chosen an easier way.
        Moreover, for the proof of Theorem~\ref{thm:conical} it is useful to have $R_3/R_2$ very large
        and $c_1 V_{00}$ is not only the most easy but also the most natural and most intuitive candidate to fill the space.
        Though, mini-layers have enough flexibility to replace its role.)

        As it is indicated above, we use infinite sequences of mini-layers and we have to emphasize that
        the corresponding sequences of parameters for Lemma~\ref{l:basic_layer} need (and fortunately can)
        be chosen so that both
        1) products of density coefficient ratios are positive
        and
        2) the product of radii ratios is positive.

        This then allows to choose the technical index
        $n_0$
        large enough
        to obtain 1) estimate \eqref{eq:V-mass}
        2) an infinite number of mini-layers that fits between $R_1$ and $R_2$
        ($R_3$ and $R_4$, respectivelly).

        The meaning of index $n_0$ is the following: out of a sequence of candidate mini-layers
        (which are indexed by $n$) we forget the first $n_0$ of them and use only the tail of the sequence.
        After $n_0$ is known
        we decide the radii to which we scale the mini-layers as well the densities that we apply to them.

\begin{remark}
        Without giving details we note that $n_0$ has to be chosen large if $\varepsilon$ is small.
        This results in the density coefficients of $V$ being bounded from above by about $1/2^{n_0}$
        (and $n_0\to \infty$ when layers closer to $0\in \R^4$ are considered in our application of the lemma)
        which is not much desired
        and implies that our {\em tangent} varifolds will be non-rectifiable.
        Actually,
        the presence of a large number of points of {\em small} density
        is unavoidable if we want to obtain
        {\em non-conical}
        tangents,
        see
        Lemma~\ref{l:abouttangents} and its references.

        Furthermore, the two-dimensional density at the points of the interface
        (i.e., at radii $R_1$ and $R_4$)
        of each layer
        will be zero. Nevertheless, this is negligible in measure (as measured by the varifold)
        and does not prevent us from
        constructing a rectifiable varifold. We leave open whether points of {\em zero} two-dimensional density
        can be completely avoided in a varifold with non-conical tangents.
  \qqeedd
\end{remark}

\NewE

\begin{proof}[of Lemma~\ref{l:system-A}]
        Choose
        $k \indn = 100 \cdot 2^n$
        and
        $\gamma \indn = \pi/4 - \pi/ \sqrt{ k \indn }$.

        With
        $ C_ {k, \gamma}$, $ c_ {k, \gamma}$
        $ \tilde C_ {k, \gamma}$, $ \tilde c_ {k, \gamma}$
        and
        $ \sigma_ {k, \gamma}$ as in Lemma~\ref{l:basic_layer}
        we have
\begin{align*}
    1
 &\ge
    \frac{
        c_ {k \indn, \gamma \indn}
         }{
        C_ {k \indn, \gamma \indn}
         }
             \ge
                 \sin ( 2(\gamma \indn - \pi /k \indn ) )
             \ge 1 - 8 \pi^2 / k \indn
             > 0
     ,
\\
    1
 &\ge
    \frac{
       \tilde
        c_ {k \indn, \gamma \indn}
         }{
       \tilde
        C_ {k \indn, \gamma \indn}
         }
             \ge
                 \sin ( 2\gamma \indn - \pi /k \indn )
                 \cos(\pi/k \indn)
             \ge
                 1 - 5 \pi^2 / k \indn
             > 0
             .
\end{align*}
        Hence
\[
    \prod_{n=1}^{\infty}
    \frac{
        c_ {k \indn, \gamma \indn}
         }{
        C_ {k \indn, \gamma \indn}
         }
    \in (0,1)
     ,
\qquad
\qquad
    \prod_{n=1}^{\infty}
    \frac{
      \tilde
        c_ {k \indn, \gamma \indn}
         }{
      \tilde
        C_ {k \indn, \gamma \indn}
         }
    \in (0,1)
  .
\]
   Furthermore
\begin{multline*}
    0
\le
    1
    -
    \sigma_{k \indn, \gamma \indn}
      ^2
=
    1
    -
    \frac{
                 \sin  2\pi / \sqrt{ k \indn }
         }{
                 \sin  ( 2\pi / k \indn + 2\pi / \sqrt{ k \indn } )
         }
  \\
=
    \frac{
            2 \cos (  \pi / k \indn + 2\pi / \sqrt{ k \indn } )  \sin \pi/k\indn
         }{
                 \sin  ( 2\pi / k \indn + 2\pi / \sqrt{ k \indn } )
         }
  \le
    \pi
    \frac{
              \pi / k \indn
         }{
                  2\pi / k \indn + 2\pi / \sqrt{ k \indn }
         }
  \le
          \frac \pi 2
          \frac 1 { \sqrt { k \indn } }
   ,
\end{multline*}
hence
\[
    \left(
      \prod_{n=1}^{\infty}
            \sigma_{k \indn, \gamma \indn}
    \right)
            ^2
   =
      \prod_{n=1}^{\infty}
            \sigma_{k \indn, \gamma \indn} ^2
    \in (0,1)
    .
\]
    Choose $n_0\in \N$ so that
    (for $n\ge n_0$)
    \[
        \varepsilon_n := 2 \cos 2(\gamma \indn - \pi /k \indn) < \varepsilon
        ,
    \]
\begin{equation}\label{eq:sinus}
    \sin 2(\gamma \indn - \pi / k \indn ) > 1 - \varepsilon /3
    ,
\end{equation}
\begin{equation}\label{eq:M}
   M
   :=
               \tfrac{1}{
                    4 \sin ( 2\gamma \indn [_0] - \pi / k \indn [_0]) \cos ( \pi / k \indn [_0] )
                }
               \tfrac{4}{
                              \sin 2( \gamma \indn [ _0 ] - \pi / k \indn [ _0 ])
                    }
    < 1 + \varepsilon
    ,
\end{equation}
\begin{align}
  \notag
    c_1:=
    \prod_{n=n_0}^{\infty}
    \frac{
      \tilde
        c_ {k \indn, \gamma \indn}
         }{
      \tilde
        C_ {k \indn, \gamma \indn}
         }
    \in (1 - \varepsilon/3,1)
    ,
\\
  \notag
    c_2:=
    \prod_{n=n_0}^{\infty}
    \frac{
        c_ {k \indn, \gamma \indn}
         }{
        C_ {k \indn, \gamma \indn}
         }
    \in (1 - \varepsilon/3,1)
\\
    \noalign{\noindent and}
  \notag
    \sigma :=
      \prod_{n=n_0}^{\infty}
            \sigma_{k \indn, \gamma \indn}
    \in ( \max( R_1/R_2, R_3/R_4) ,1)
    .
\end{align}
    Let $r \indn[ _0 ] :=  R_1  / \sigma$, $R \indn[ _0 ] := \sigma  R_4 $,
    and then
    inductively   %
    $r \indn[ +1 ] :=  \sigma_{k \indn, \gamma \indn} r \indn$,
    $R \indn[ +1 ] := R \indn /  \sigma_{k \indn, \gamma \indn}$.
    Then
    $\lim_{n\to \infty} r \indn = R_1$,
    $\lim_{n\to \infty} R \indn = R_4$,
\begin{gather*}
      R_1  < r \indn [ _0 ] \le R_2 \le R_3 \le  R \indn [ _0 ] <  R_4
      ,
 \\
      R_1  < \dots < r \indn [ _0 +2] < r \indn [ _0 +1] <r \indn [ _0 ]
           \le R \indn [ _0 ] < R \indn [ _0 +1] < R \indn [ _0 +2] < \dots <  R_4
      .
\end{gather*}
       Let
\begin{align}
  \notag
    c_{1,n}:&=
    \prod_{m=n}^{\infty}
    \frac{
      \tilde
        c_ {k \indX[m], \gamma \indX[m]}
         }{
      \tilde
        C_ {k \indX[m], \gamma \indX[m]}
         }
    \in (0,1)
   \qquad \qquad \text{(hence $c_1 = c_{1,n_0}$)}
\\
  \notag
    c_{2,n}:&=
    \prod_{m=n_0}^{n-1}
    \frac{
        c_ {k \indX[m], \gamma \indX[m]}
         }{
        C_ {k \indX[m], \gamma \indX[m]}
         }
    \in (0,1]
   \qquad \qquad \text{($c_{2,n_0}:=1$; $c_2 = c_{2,\infty}$)}
   ,
\end{align}
        and let
        $V_1^{r,s,k,\gamma}$, $V_2^{r,s,k,\gamma}$
        be as in
        Lemma~\ref{l:basic_layer}
        and
        $V_{00}=V_{00}^{ r \indn [ _0 ], R \indn [ _0 ], k \indn}$
        is as in~\eqref{eq:V00}.
        Let
\[  %
   V =
        \sum_{n=n_0}^\infty
             \frac{
                     c_1
                     \,
                     c_{2,n}
                  }{
                        C_ {k \indn, \gamma \indn }
                  }
             V_2^{ r \indn [ +1 ], r \indn, k \indn, \gamma \indn }
        +
        c_1
         V_{00}
        +
        \sum_{n=n_0}^\infty
             \frac{
                     c_{1,n+1}
                  }{
                     \tilde
                        C_ {k \indn, \gamma \indn }
                  }
              V_1^{ R \indn, R \indn [+1], k \indn, \gamma \indn }
\]  %
     and
\[  %
   V_m =
        \sum_{n=n_0}^m
             \frac{
                     c_1
                     \,
                     c_{2,n}
                  }{
                        C_ {k \indn, \gamma \indn }
                  }
             V_2^{ r \indn [ +1 ], r \indn, k \indn, \gamma \indn }
        +
        c_1
         V_{00}
        +
        \sum_{n=n_0}^m
             \frac{
                     c_{1,n+1}
                  }{
                     \tilde
                        C_ {k \indn, \gamma \indn }
                  }
              V_1^{ R \indn, R \indn [+1], k \indn, \gamma \indn }
     .
\]  %
        Then
        \eqref{eq:sptV} can be obtained from \eqref{eq:Vi-spt} and \eqref{eq:V00-spt}.

        Denote also
\[  %
   V - V_m :=
        \sum_{n=m+1}^\infty
             \frac{
                     c_1
                     \,
                     c_{2,n}
                  }{
                        C_ {k \indn, \gamma \indn }
                  }
             V_2^{ r \indn [ +1 ], r \indn, k \indn, \gamma \indn }
        +
        \sum_{n=m+1}^\infty
             \frac{
                     c_{1,n+1}
                  }{
                     \tilde
                        C_ {k \indn, \gamma \indn }
                  }
              V_1^{ R \indn, R \indn [+1], k \indn, \gamma \indn }
     .
\]  %

Note that
\begin{align*}
             \frac{
                     c_1
                     \,
                     c_{2,n}
                  }{
                        C_ {k \indn, \gamma \indn }
                  }
              \frac{4}{
                              \sin 2( \gamma \indn - \pi / k \indn)
                    }
           &\le M,
                \qquad\qquad n\ge n_0,
\\
              c_1
            \le
                1
           &\le
                M,
\\
             \frac{
                     c_{1,n+1}
                  }{
                     \tilde
                        C_ {k \indn, \gamma \indn }
                  }
              \frac{4}{
                              \sin 2( \gamma \indn - \pi / k \indn)
                    }
           &\le
               M,
                \qquad\qquad n\ge n_0
           .
\end{align*}
     Hence, by \eqref{eq:mass-l-basic_layer}
     and \eqref{eq:M},
\begin{align}\label{eq:Vmass}
 \mass ( V )
 \le
   {}
   &     \sum_{n=n_0}^\infty
             M \pi
                ( (r \indn )^2 - (r  \indn [ +1 ])^2 )
\\\notag  &
        +
             M \pi
                ( (R \indX [ n_0 ] )^2 - (r  \indX [ n_0 ] )^2 )
        +
        \sum_{n=n_0}^\infty
             M \pi
                ( (R \indn [ +1 ] )^2 - (R  \indn)^2 )
\\\notag
      =
       {}
         &
             M \pi
                    ( (R_4)^2 - (R_1)^2 )
      \le
             ( 1 + \varepsilon) \pi
                    ( (R_4)^2 - (R_1)^2 )
       .
\end{align}
In particular, $V$ is a Radon measure.
Therefore $V$ is a varifold, obviously rectifiable.
Moreover, $ \mass (V-V_n) \to 0$ as $n \to \infty$.

Note also that
\begin{align*}
             \frac{
                     4
                     c_1
                     \,
                     c_{2,n}
                  }{
                        C_ {k \indn, \gamma \indn }
                  }
           &\ge
                     c_1c_2,
                \qquad\qquad n\ge n_0,
\\
              c_1
           &\ge
                     c_1c_2,
\\
             \frac{
                     4
                     c_{1,n+1}
                  }{
                     \tilde
                        C_ {k \indn, \gamma \indn }
                  }
           &\ge
                     c_1c_2,
                \qquad\qquad n\ge n_0
           .
\end{align*}
        Again by \eqref{eq:mass-l-basic_layer} (and \eqref{eq:sinus}), we get
\begin{align}\label{eq:Vmass2}
 \mass ( V )
 \ge
   {}
   &     \sum_{n=n_0}^\infty
            (1-\varepsilon/3)
             c_1c_2  \pi
                ( (r \indn )^2 - (r  \indn [ +1 ])^2 )
\\\notag  &
        +
            (1-\varepsilon/3)
             c_1c_2  \pi
                ( (R \indX [ n_0 ] )^2 - (r  \indX [ n_0 ] )^2 )
\\\notag  &
        +
        \sum_{n=n_0}^\infty
            (1-\varepsilon/3)
             c_1c_2  \pi
                ( (R \indn [ +1 ] )^2 - (R  \indn)^2 )
\\\notag  =
          {}
          &
            (1-\varepsilon/3)
             c_1c_2  \pi
                    ( (R_4)^2 - (R_1)^2 )
      \ge
             ( 1 - \varepsilon) \pi
                    ( (R_4)^2 - (R_1)^2 )
       .
\end{align}
    From \eqref{eq:Vmass} and \eqref{eq:Vmass2}, \eqref{eq:V-mass} follows
    in the special case $s_1=R_1$, $s_2=R_4$.
    (Note that a special case $s_1=r_1$, $s_2=r_2$ of \eqref{eq:mass-l-basic_layer} was used.)
    Proof of the general case $R_1 \le s_1 < s_2 \le R_4$ of  \eqref{eq:V-mass}
    is similar, with the
    following differences: a) some of the terms in \eqref{eq:Vmass},
    \eqref{eq:Vmass2} might be replaced by $0$, and
    b) some (at most two) of the terms might be ``cut'' to a smaller
    span between radii; the general case of \eqref{eq:mass-l-basic_layer} is
    used in such a case.
    For example, \eqref{eq:Vmass2} is to be replaced by
\begin{align}\label{eq:Vmass2-general}
 \mass ( V \mrest G_2( A _ {s_1} ^ {s_2} ))
 \ge
   {}
   &     \sum_{n=n_0}^\infty
            (1-\varepsilon/3)
             c_1c_2  \pi
                ( (\widehat{ r \indn })^2 - (\widehat{ r  \indn [ +1 ]})^2 )
\\\notag  &
        +
            (1-\varepsilon/3)
             c_1c_2  \pi
                ( (\widehat{ R \indX [ n_0 ]} )^2 - (\widehat{ r  \indX [ n_0 ]} )^2 )
\\\notag  &
        +
        \sum_{n=n_0}^\infty
            (1-\varepsilon/3)
             c_1c_2  \pi
                ( (\widehat{ R \indn [ +1 ]} )^2 - (\widehat{ R  \indn})^2 )
\\\notag  =
          {}
          &
            (1-\varepsilon/3)
             c_1c_2  \pi
                    ( (s_2)^2 - (s_1)^2 )
      \ge
             ( 1 - \varepsilon) \pi
                    ( (s_2)^2 - (s_1)^2 )
       .
\end{align}
        where $\widehat{\rho} = \min (\max (s_1, \rho), s_2)$.

        We have
        $V_{n_0 - 1} = c_1 V_{00}$ and, by~\eqref{eq:V00-vari-BB},
        \[
             \vari V_{n_0 - 1}
           =
                 c_1 \Mucircles_{ R \indn [ _0 ] , k \indn [_0] }
                 -
                 c_1 \Mucircles_{ r \indn [ _0 ] , k \indn [_0] }
           =
                 c_{1,n_0} \Mucircles_{ R \indn [ _0 ] , k \indn [_0] }
                 -
                 c_1\,c_{2,n_0} \Mucircles_{ r \indn [ _0 ] , k \indn [_0] }
        \]
        where $\Mucircles_{\rho,k}$ is as in \eqref{eq:Bboundary}.
        Using \eqref{eq:V1-layer-vari} \eqref{eq:V2-layer-vari} we obtain by induction
\begin{equation}\label{eq:Vn-vari}
 \vari V_n
 =
   c_{1, n+1}
     \Mucircles_ {R \indn [+1], k\indn [+1] }
     -
   c_1 \,
   c_{2, n+1}
     \Mucircles_ {r \indn [+1], k\indn [+1] }
   .
\end{equation}
Indeed, for $n\ge n_0$,
\begin{multline*}
 \vari V_{n}
 =
             \frac{
                     c_{1,n+1}
                  }{
                     \tilde
                        C_ {k \indn, \gamma \indn }
                  }
              (
               \tilde C_ {k \indn, \gamma \indn}
                \,
                \Mucircles_ { R \indn [+1] , 2k \indn}
             -
               \tilde c_ {k \indn, \gamma \indn}
                \,
                \Mucircles_ { R \indn , k \indn}
              )
  +
\\
   c_{1, n}
     \Mucircles_ {R \indn , k\indn }
     -
   c_1 \,
   c_{2, n}
     \Mucircles_ {r \indn , k\indn }
  +
\\
             \frac{
                     c_1
                     \,
                     c_{2,n}
                  }{
                        C_ {k \indn, \gamma \indn }
                  }
              (
                C_ {k \indn, \gamma \indn}
                \,
                \Mucircles_ {r \indn, k \indn}
             -
                c_ {k \indn, \gamma \indn}
                \,
                \Mucircles_ {r \indn [ +1 ] , 2k \indn}
              )
\\
 =
   c_{1, n+1}
     \Mucircles_ {R \indn [+1], k\indn [+1] }
     -
   c_1 \,
   c_{2, n+1}
     \Mucircles_ {r \indn [+1], k\indn [+1] }
   .
\end{multline*}

It is easy to verify that, for every smooth vector field $X$,
\begin{align}
\label{eq:Bconverge}
     \Mucircles_ {R \indn [+1], k\indn [+1] }
     (X)
  \to
     \Mucircles_ {R _4, \infty}
     (X)
\\
\noalign{\noindent and}
\label{eq:Bconverge2}
     \Mucircles_ {r \indn [+1], k\indn [+1] }
     (X)
  \to
     \Mucircles_ {R _1, \infty}
     (X)
   .
\end{align}
        \NewBrefsugSmall
Indeed, the (local) uniform continuity of $X$ can be used in the same way as when proving the simple planar exercise
with Dirac masses
  $
  \frac 1 k \sum_{i=1}^k \delta_{\left( \frac i k, \frac 1 k \right)}  \overset{w}\to \Hausd ^1 \mrest \left([0,1]\times \{0\}\right)
  $.
        \NewErefsugSmall
On the other hand,
\[
   \left|
    \vari V (X) - \vari V_n (X)
   \right|
    \overset{\eqref{eq:var-def}}{
    =
    }
   \left|
    \int \diver_S X(x) \intd (V-V_n) (x,S)
   \right|
    \le
    \| X \| _{C^1} \cdot \mass (V - V_n)
    \to
    0
\]
    as $n\to \infty$.
    From \eqref{eq:Vn-vari}
            \NewBsmall
    and \eqref{eq:Bconverge}, \eqref{eq:Bconverge2}
            \NewEsmall
    we therefore obtain
    the formula
            \NewBsmall
    \eqref{eq:layer-V-vari}
            \NewEsmall
    for the first variation of $V$,
    with $c := \lim c_1\, c_{2,n} = c_1\, c_2 \in (1-\varepsilon,1)$.
\qed
\end{proof}
\begin{lemma}\label{l:system-B}
     If $0< R_1 < R_2 < R_3 < R_4 <\infty$
     and $\varepsilon > 0$
     then there is
     $ c \in ( 1-\varepsilon, 1) $
     and
     a rectifiable 2-varifold $V$ with
     $\spt \mu_V \subset A_{ R_1 }^{ R_4 }$,
\begin{align}
\label{eq:sptV-B}
       \spt V
    &\subset
              G_2(
                A_{ R_1 }^{ R_2 }
                \cup
                A_{ R_3 }^{ R_4 }
               )
              \cap
              G_{\RadAndOneTwo}^{\,\varepsilon}
\\
\notag
    &\phantom{{}\subset{}}
         {\cup}\  %
              G_2(
                A_{ R_2 }^{ R_3 }
               )
              \cap
              G_{\RadAndOneTwo}^{\,0}
\\
\label{eq:sptV-B-simple}
    &\subset
              G_2(A_{ R_1 }^{ R_4 })
              \cap
              G_{\RadAndOneTwo}^{\,\varepsilon}
   ,
\end{align}
\begin{equation}\label{eq:V-mass-B}
     ( 1 - \varepsilon ) \pi ( (s_2)^2  -  (s_1)^2 )
     <
     \mass(V \mrest G_2 ( A _ {s_1} ^ {s_2} ) )
     <
     ( 1 + \varepsilon ) \pi ( (s_2)^2  -  (s_1)^2 )
\end{equation}
   whenever $R_1 \le s_1 < s_2 \le R_4$,
     and
\begin{equation}\label{eq:layer-V-vari-B}
   \vari V (X)
 =
     \Mucircles_{R_4, \infty} (X)
 -
   c \Mucircles_{R_1, \infty} (X)
\end{equation}
      where $\Mucircles_{\rho, \infty}$ is as in \eqref{eq:Binfty}.
\end{lemma}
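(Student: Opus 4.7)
The plan is to obtain $V$ as the image varifold $V = L_{\ImageVarifold} V_A$, where $V_A$ is the varifold supplied by Lemma~\ref{l:system-A} for the same parameters $R_1, R_2, R_3, R_4, \varepsilon$, and $L \colon \R^4 \to \R^4$ is the self-inverse linear isometry interchanging $e_2$ and $e_3$. The whole argument hinges on the identity $L \JOneThree L = \JOneTwo$, a direct matrix calculation from \eqref{eq:J01} and \eqref{eq:J1234}, together with the coordinate identity $L F((a,b),(c,d)) = F((c,d),(a,b))$ which follows from \eqref{eq:F-def}.

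For the support inclusions \eqref{eq:sptV-B} and \eqref{eq:sptV-B-simple}, I will check that $L$ sends $G_{\RadAndOneThree}^{\,\varepsilon}$ onto $G_{\RadAndOneTwo}^{\,\varepsilon}$. Indeed, for $(x,S)\in G_{\RadAndOneThree}^{\,\varepsilon}$ with $S=\Span\{u,v\}$ witnessing \eqref{eq:Gradx}, the pair $\{Lu,Lv\}$ is orthonormal, $N(Lx)=LN(x)$ by orthogonality of $L$, and
\[
   \|Lv - \JOneTwo N(Lx)\|
    = \|Lv - L \JOneThree L\cdot LN(x)\|
    = \|v - \JOneThree N(x)\| \le \varepsilon,
\]
with the analogous estimate for $u$, so $(Lx,LS)\in G_{\RadAndOneTwo}^{\,\varepsilon}$. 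Combining this with \eqref{eq:sptV} (and the fact that $L$ preserves annuli centred at the origin) yields \eqref{eq:sptV-B} and \eqref{eq:sptV-B-simple}. For the mass bound \eqref{eq:V-mass-B}, the isometry $L$ fixes the origin, so $L^{-1}(A_{s_1}^{s_2})=A_{s_1}^{s_2}$ and $\mu_V(A_{s_1}^{s_2}) = \mu_{V_A}(A_{s_1}^{s_2})$; hence \eqref{eq:V-mass} transfers verbatim. Rectifiability is preserved under the smooth isometric pushforward.

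The last item is \eqref{eq:layer-V-vari-B}. By the standard transformation rule for linear isometries, $\vari(L_{\ImageVarifold} V_A)(X) = \vari V_A(Y)$ with $Y(y) := L X(Ly)$. Applying \eqref{eq:layer-V-vari} to $Y$, it suffices to verify $B_{\rho,\infty}(Y) = B_{\rho,\infty}(X)$ for $\rho \in \{R_1,R_4\}$. The integrand transforms as
\[
   Y(y)\cdot N(y) = LX(Ly)\cdot N(y) = X(Ly)\cdot LN(y) = X(Ly)\cdot N(Ly),
\]
and the change of variable $z = Ly$ in \eqref{eq:Binfty} (which is an isometry preserving $\Hausd^2$) reduces the claim to the set identity $L\bigl(F(\rho\cdot S_1(\R^2)\times S_1(\R^2))\bigr) = F(\rho\cdot S_1(\R^2)\times S_1(\R^2))$. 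This is immediate from $LF((a,b),(c,d)) = F((c,d),(a,b))$ together with the homogeneity \eqref{eq:F-radial}, which gives $F(\rho S_1\times S_1) = \rho\, F(S_1\times S_1) = F(S_1\times \rho S_1)$.

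No substantive obstacle is anticipated: the whole proof is a symmetry transfer driven by the conjugacy $L\JOneThree L = \JOneTwo$. The only delicate bookkeeping is matching the definition of $B_{\rho,\infty}$ (defined in terms of the same $F$ in both lemmata) with the pushed-forward variation, which reduces to the two explicit identities stated at the beginning of the proof.
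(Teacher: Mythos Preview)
Your proposal is correct and follows essentially the same approach as the paper: both push forward the varifold of Lemma~\ref{l:system-A} by the coordinate swap $L$ exchanging $e_2$ and $e_3$, use the conjugacy $L\JOneThree L=\JOneTwo$ to convert $G_{\RadAndOneThree}^{\,\varepsilon}$ into $G_{\RadAndOneTwo}^{\,\varepsilon}$, and use $LF((a,b),(c,d))=F((c,d),(a,b))$ together with \eqref{eq:F-radial} to see that the integration domain in $B_{\rho,\infty}$ is $L$-invariant. Your treatment of \eqref{eq:layer-V-vari-B} via $\vari(L_{\ImageVarifold}V_A)(X)=\vari V_A(L\,X\circ L)$ makes explicit a step the paper leaves to the reader, but the argument is the same.
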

\begin{proof}
   The statement is the same as in Lemma~\ref{l:system-A}, with the exception
   of a change of coordinates in \eqref{eq:sptV-B} ---
   we show that
   it is enough to exchange
   coordinates $x_2$ and $x_3$.
   Let
   $\phi(x_1, x_2, x_3, x_4) = \phi (x_1, x_3, x_2, x_4)$, ($(x_1, x_2, x_3, x_4) \in \R^4$),
   and
   $\Phi( x, S)= (\phi (x), \phi(S))$ ($(x,S) \in G_2(\R^4)$).
   Then
   $
     \phi(\JOneThree x)
     =
     \JOneTwo \phi(x)
   $
   and $
   \Phi(G_{\RadAndOneThree}^{\,\varepsilon})
   =
        G_{\RadAndOneTwo}^{\,\varepsilon}
   $
   (cf.~\eqref{eq:Grad}).
   The domain of integration
   in \eqref{eq:Binfty}
   (which is parameterized by $F$)
   does not change under $\phi$:
   $\phi(F((\rho a, \rho b),(c,d)))
   \overset{\eqref{eq:F-exchange}}{=}
   F((c,d),(\rho a, \rho b))
   \overset{\eqref{eq:F-radial}}{=}
   F((\rho c, \rho d),(a,b))$.
           \NewNewBsmall
   Since $\phi$ is an isometry, it also preserves Hausdorff measure
   in \eqref{eq:Binfty}.
           \NewNewEsmall
   Therefore, if $\tilde V$ is as in  Lemma~\ref{l:system-A},
   then $V := \phi_\ImageVarifold \tilde V = \Phi_\ImageMeasureFV \tilde V$ is a varifold with required properties.
\qed
\end{proof}
\begin{lemma}
   If $V$ is as in Lemma~\ref{l:system-A} or Lemma~\ref{l:system-B} and $r > 0$
   then
   \begin{equation}\label{eq:nullsphere}
        \mass ( V \mrest G_2( S_r (\R^4))) = 0
      .
   \end{equation}
\end{lemma}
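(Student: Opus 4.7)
The plan is to reduce to the individual pieces that make up $V$ and observe that each piece assigns zero mass to any sphere centred at the origin; countable additivity of $\mu_V$ then gives the conclusion, since $\mass(V \mrest G_2(S_r(\R^4))) = \mu_V(S_r(\R^4))$. Recall that the varifold of Lemma~\ref{l:system-A} is the sum $V = c_1 V_{00} + \sum_n (\text{scaled } V_2^{r^{(n+1)}, r^{(n)}, k^{(n)}, \gamma^{(n)}}) + \sum_n (\text{scaled } V_1^{R^{(n)}, R^{(n+1)}, k^{(n)}, \gamma^{(n)}})$, and each of the $V_1^{\cdot}$, $V_2^{\cdot}$ in turn is a finite combination of ring varifolds $V^{d,\alpha_0}_{t_1,t_2}$ from Lemma~\ref{l:basic}.

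For each ring varifold, item 6 of Lemma~\ref{l:basic} gives the exact formula $V^{d,\alpha_0}_{t_1,t_2}(G_2(A^{r(s_2)}_{r(s_1)})) = \pi|\sqrt{r(s_2)^4 - d^2} - \sqrt{r(s_1)^4 - d^2}|$ for $t_1 \le s_1 \le s_2 \le t_2$. Specialising to $s_1 = s_2 = s$ yields $A_{r(s)}^{r(s)} = S_{r(s)}(\R^4)$ on the left and $0$ on the right. Because $r(\cdot)$ is continuous and $\spt \mu_{V^{d,\alpha_0}_{t_1,t_2}}$ is contained in $A_{r(t_1)}^{r(t_2)}$ (or its reversal), every radius $\rho > 0$ either corresponds to an admissible $s$ or places $S_\rho$ disjoint from the support of the ring; in both cases $\mu_{V^{d,\alpha_0}_{t_1,t_2}}(S_\rho) = 0$. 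The planar piece $V_{00}$ is, by \eqref{eq:V00} and \eqref{eq:V1rsab}, a finite combination of the measures $\Hausd^2 \mrest (\Pi \cap A_{r^{(n_0)}}^{R^{(n_0)}})$ for finitely many $2$-planes $\Pi$ through the origin; each such plane meets $S_\rho(\R^4)$ in at most a circle, which has $\Hausd^2$-measure zero.

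Summing countably many zeros is justified because $V$ is a Radon measure of finite total mass on bounded sets (cf.~\eqref{eq:V-mass}), so $\mu_V(S_\rho) = \sum \mu_{V_n}(S_\rho) = 0$. For the case of Lemma~\ref{l:system-B}, the varifold is obtained as the pushforward $\phi_{\ImageVarifold}\tilde V$ of a Lemma~\ref{l:system-A} varifold $\tilde V$ under the orthogonal coordinate swap $\phi(x_1,x_2,x_3,x_4) = (x_1,x_3,x_2,x_4)$; this map preserves $S_r(\R^4)$, so $\mu_V(S_r) = \mu_{\tilde V}(\phi^{-1}(S_r)) = \mu_{\tilde V}(S_r) = 0$. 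There is no real obstacle in this argument — it is essentially bookkeeping that uses the explicit mass formula of Lemma~\ref{l:basic} together with the fact that $V$ is a countable combination of smooth-surface varifolds.
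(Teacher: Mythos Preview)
Your argument is correct, but it takes a longer route than the paper. The paper's proof is a single line: since $S_r(\R^4)\subset A_{r-\varepsilon_1}^{r+\varepsilon_1}$ for every $\varepsilon_1\in(0,r)$, the already-established annulus estimate \eqref{eq:V-mass} (respectively \eqref{eq:V-mass-B}) gives
\[
\mass\bigl(V\mrest G_2(S_r)\bigr)\le\mass\bigl(V\mrest G_2(A_{r-\varepsilon_1}^{r+\varepsilon_1})\bigr)\le(1+\varepsilon)\pi\bigl((r+\varepsilon_1)^2-(r-\varepsilon_1)^2\bigr)\to 0.
\]
(For $r\notin[R_1,R_4]$ the sphere misses $\spt\mu_V$ entirely; for $r=R_1$ or $r=R_4$ one uses a one-sided annulus.) You instead unwind $V$ all the way down to the ring varifolds and planar annuli of Lemmas~\ref{l:basic} and~\ref{l:basic_layer}, check that each smooth piece meets a sphere in a set of $\Hausd^2$-measure zero, and sum. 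This works, but it reproves information already encoded in \eqref{eq:V-mass}: that estimate was obtained precisely by aggregating the ring and planar contributions via Lemma~\ref{l:basic}, item~6, so your decomposition effectively re-derives a special case of it. The paper's approach is more economical because it stays at the level of the layer varifold and uses the black-box mass bound; your approach is more hands-on and would generalise to situations where one has the building blocks but not yet a clean global mass estimate.
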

\begin{proof}
   For every $0 < \varepsilon_1 < r$ we heave by~\eqref{eq:V-mass}, \eqref{eq:V-mass-B},
\begin{equation*}
    \mass ( V \mrest G_2( A_{r-\varepsilon_1}^{r+\varepsilon_1} ))
    \le
    (1+\varepsilon) \pi ((r+\varepsilon_1)^2 - (r-\varepsilon_1)^2) \to 0
    .
\end{equation*}
\qed
\end{proof}
   We do the last step of our construction of a stationary rectifiable varifold in the next section.

\section{Two variants of the main result}\label{s:variants}
\begin{theorem}\label{thm:nonconical}
   There is a stationary rectifiable $2$-varifold $V$
   in $\R^4$ that has a non-conical (hence non-unique) tangent
   at\/ $0$
   and\/ $0<\theta^2(V, 0)<\infty$.
\end{theorem}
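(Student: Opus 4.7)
The idea is to imitate Proposition~\ref{prop:nonrectif} verbatim, replacing each non-rectifiable slab $V_{1,r,s}$ or $V_{2,r,s}$ from Section~\ref{s:non-rect} by a rectifiable ``layer'' varifold produced by Lemma~\ref{l:system-A} or Lemma~\ref{l:system-B}. The only technical novelty compared with Section~\ref{s:non-rect} is that each layer now carries a multiplicative defect $c_i\in(1-\varepsilon_i,1)$ in its first variation, so boundary cancellation at neighbouring spheres is not automatic; we compensate by introducing positive weights $w_i$ that kill the defect.

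Fix the dyadic radii $r_i=2^i$, $i\in\Z$, and a positive sequence $\varepsilon_i$ with $\sum_{i\in\Z}\varepsilon_i<\infty$ and $\varepsilon_i\to 0$ as $i\to\pm\infty$. For each $i$, apply Lemma~\ref{l:system-A} when $i$ is even and Lemma~\ref{l:system-B} when $i$ is odd, with $R_1=r_i$, $R_4=r_{i+1}$, a convenient choice of intermediate $R_2,R_3$, and parameter $\varepsilon_i$; call the resulting rectifiable $2$-varifold $W_i$. Then $\spt\mu_{W_i}\subset A_{r_i}^{r_{i+1}}$, the Grassmannian support of $W_i$ lies in $G_{\RadAndOneThree}^{\,\varepsilon_i}$ or $G_{\RadAndOneTwo}^{\,\varepsilon_i}$ according to the parity of $i$, the mass on every subshell $A_{s_1}^{s_2}\subset A_{r_i}^{r_{i+1}}$ is within a factor $1\pm\varepsilon_i$ of $\pi(s_2^2-s_1^2)$, and $\vari W_i(X)=B_{r_{i+1},\infty}(X)-c_i\,B_{r_i,\infty}(X)$ for some $c_i\in(1-\varepsilon_i,1)$. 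Set $w_0=1$ and $w_{i+1}=w_i/c_{i+1}$; since $\sum_i|1-c_i|<\infty$, the weights converge to $w_{\pm\infty}\in(0,\infty)$ and remain in some range $[w_-,w_+]\subset(0,\infty)$. Put $V=\sum_{i\in\Z}w_i W_i$. Rectifiability of $V$ is clear. The two-sided mass bound \eqref{eq:V-mass} plus the boundedness of the weights gives $\mu_V(B(0,R))\asymp R^2$, hence $0<\theta^2(V,0)<\infty$. Stationarity follows by telescoping: the partial sums $\sum_{|i|\le N}\vari(w_iW_i)(X)$ equal $w_N B_{r_{N+1},\infty}(X)-w_{-N-1} B_{r_{-N},\infty}(X)$; the first term vanishes once $r_{N+1}$ exceeds the radius of $\spt X$, and the second vanishes in the limit because the homogeneity \eqref{eq:F-radial} forces the integration surface in \eqref{eq:Binfty} to have $\Hausd^2$-measure $O(\rho^2)$, so $B_{\rho,\infty}(X)=O(\rho)\to 0$ as $\rho\to 0$.

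For the non-conical tangent I would use Allard compactness to extract a subsequence of $\eta_{0,\,2^{-2n}\,\ImageVarifold}V$ converging to some $C\in\VarTan_0 V$. Under blow-up by $2^{-2n}$ the original layer $W_i$ on $A_{2^i}^{2^{i+1}}$ is sent to $A_{2^{i+2n}}^{2^{i+2n+1}}$ with weight $w_i$ unchanged; for any fixed $j\in\Z$, the piece of $\eta_{0,\,2^{-2n}\,\ImageVarifold}V$ in the shell $A_{2^j}^{2^{j+1}}$ comes from the layer $W_{j-2n}$, which has the \emph{same parity} as $j$, hence the same Grassmannian type A or B, with $\varepsilon_{j-2n}\to 0$ and $w_{j-2n}\to w_{-\infty}$ as $n\to\infty$. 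Thus on each shell $A_{2^j}^{2^{j+1}}$ the limit $C$ is a nontrivial Radon measure supported in the closed set $G_{\RadAndOneThree}^{\,0}$ (for $j$ even) or $G_{\RadAndOneTwo}^{\,0}$ (for $j$ odd). Applying $\eta_{0,\,2\,\ImageVarifold}$ shifts this alternation by one shell, swapping the two Grassmannian types; the disjointness \eqref{eq:J1324J1234} then yields $\eta_{0,\,2\,\ImageVarifold}C\ne C$, so $C$ is not conical.

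The hard part is precisely this last step: one must justify that the Grassmannian alternation genuinely survives in the weak limit and that the limit on each shell is nontrivial. Nontriviality is furnished by the uniform lower bound in \eqref{eq:V-mass}; the support inclusion passes to the limit because the sets $G_{\RadAndOneThree}^{\,\varepsilon}$, $G_{\RadAndOneTwo}^{\,\varepsilon}$ are closed and $\varepsilon_{j-2n}\to 0$ as $n\to\infty$. A lesser but genuine subtlety is arranging the intermediate radii $R_2,R_3$ used in each application of Lemma~\ref{l:system-A}/\ref{l:system-B} so that they scale with $r_i$ (for example $R_2=\tfrac54 r_i$, $R_3=\tfrac74 r_i$), which ensures that the approximation-to-``ideal-layer'' statement above is uniform in $j$.
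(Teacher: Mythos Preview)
Your proof is correct and follows the same approach as the paper: alternating layers from Lemmas~\ref{l:system-A} and~\ref{l:system-B} on dyadic shells, weights $w_i$ chosen so that the boundary terms $B_{r_i,\infty}$ telescope, and non-conicality of a subsequential tangent via the disjointness~\eqref{eq:J1324J1234} of the two Grassmannian types on adjacent shells. The one detail you leave implicit---that the limit $C$ assigns zero mass to each sphere $S_r(\R^4)$, so that positive mass on a closed shell forces positive mass on its interior---follows at once from the upper bound in~\eqref{eq:V-mass} (cf.~\eqref{eq:nullsphere} in the paper).
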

        \NewBrefsug
      The proof is built around the idea of alternating layers of two types of varifolds as in our
      non-rectifiable example in Section~\ref{s:non-rect}.
      For each layer, the varifold of Section~\ref{s:non-rect} is replaced by its rectifiable ``approximation''
      from Lemma~\ref{l:system-A} and Lemma~\ref{l:system-B}.
      However this introduces some excess and therefore the density coefficients must be calculated accordingly
      and we have to take care to get positive density at the origin, which means we have to estimate yet another infinite product.

      As we emphasised above, it is important that the first variations of the layer varifolds is a measure
      (vector measure with the radial directions) {\em uniformly}
      distributed on the interfaces and therefore it is compatible for our two types of layers which differ by a rotation.
      This important feature is shared with Section~\ref{s:non-rect}.
        \NewErefsug
\begin{proof}
{\bfseries 1. The varifold $V$.}
     For $0< R_1 < R_2 < R_3 < R_4 <\infty$
     and $\varepsilon > 0$
     let
\[
     V^1_{  R_1, R_2, R_3, R_4, \varepsilon}
     \qquad
     \text{
     and
     }
     \qquad
      c^1_{  R_1, R_2, R_3, R_4, \varepsilon}
      \in (1 - \varepsilon, 1)
\]
     denote the varifold and the number from Lemma~\ref{l:system-A}.
     Let
\[
     V^2_{  R_1, R_2, R_3, R_4, \varepsilon}
     \qquad
     \text{
     and
     }
     \qquad
      c^2_{  R_1, R_2, R_3, R_4, \varepsilon}
      \in (1 - \varepsilon, 1)
\]
     denote the varifold and the number from Lemma~\ref{l:system-B}.

      For $n\in \Z$, let
\begin{align*}
      \varepsilon \indn &= 1/4(n^2+1)
      \\
      R_1 \indn &= 2^{-n}
      \\
      R_2 \indn &= (1+\varepsilon \indn ) 2^{-n}
      \\
      R_3 \indn &= (1-\varepsilon \indn ) 2^{-n+1}
      \\
      R_4 \indn &= 2^{-n+1}
            =  R_1 \indn [-1]
      .
\end{align*}
      Let
\[
      V\indn =
      \begin{cases}
           V^1_{  R_1 \indn, R_2 \indn, R_3 \indn, R_4 \indn, \varepsilon \indn} & \text{ for $n$ even, and}
      \\
           V^2_{  R_1 \indn, R_2 \indn, R_3 \indn, R_4 \indn, \varepsilon \indn} & \text{ for $n$ odd.}
      \end{cases}
\]
      Accordingly, let
\[
      c\indn =
      \begin{cases}
           c^1_{  R_1 \indn, R_2 \indn, R_3 \indn, R_4 \indn, \varepsilon \indn} & \text{ for $n$ even, and}
      \\
           c^2_{  R_1 \indn, R_2 \indn, R_3 \indn, R_4 \indn, \varepsilon \indn} & \text{ for $n$ odd.}
      \end{cases}
\]
    Let $C \indX [0]=1$ and
\[
      C\indn =
      \begin{cases}
           \prod_{k=0}^{n-1} c \indX [k] & \text{ for $n>0$, and}
      \\
           \prod_{k=n}^{-1} \frac{1}{ c \indX [k] } & \text{ for $n<0$.}
      \end{cases}
\]
    Since $c \indX [k] \ge 1 - \varepsilon \indX [k] $
    and
    $\sum_{k\ge 0}  \varepsilon \indX [k] < \infty$, we have
\[
    C \indX [\infty] := \lim_{n\to\infty} C \indn \in ( 0, \infty)
    .
\]
    Define
\[
    V := \sum_{n\in \Z}  C \indn V \indn
    .
\]
    By~\eqref{eq:V-mass}, \eqref{eq:V-mass-B},
\begin{equation}\label{eq:Vn-mass}
         \frac \pi  2
                               ( (R_4 \indn )^2 -  (R_1 \indn)^2 )
      \le
         \mass ( V \indn )
      \le
          M \indn :=      2\pi ( (R_4 \indn )^2 -  (R_1 \indn)^2 )
     .
\end{equation}
    Since $C\indn$ is decreasing,
\begin{equation}\label{eq:conv}
      \sum_{n \ge -k }
         C \indn \mass ( V \indn )
     \le
      \sum_{n \ge -k }
         C \indX [-k] M \indn
     =
             C \indX [-k]
                2\pi  (R_4 \indX [-k] )^2
     < \infty
     .
\end{equation}
    $V$ is a Radon measure because, for every $k$,
\[
      V
            (  G_2(\{ x \setcolon \| x \| < 2^{k} \})   )
     \le
      \sum_{n \ge -k }
         C \indn \mass ( V \indn )
     < \infty
     .
\]
Obviously, the varifold $V$ is rectifiable.

    Using \eqref{eq:V-mass} and \eqref{eq:V-mass-B} more wisely than in \eqref{eq:Vn-mass}
    we get that
\begin{equation}\label{eq:unused--density-convergence}
         C \indX [\infty] (1 - \varepsilon \indn) \pi  R^2
      \le
           V ( G_2( \{ x \setcolon \| x \| \le R\} ) )
      \le
         C \indn (1 + \varepsilon \indn) \pi  R^2
\end{equation}
    whenever $R\in (0, R_4 \indn)$.
    Hence
    \[
        \theta^2(V, 0) = C \indX [\infty] \pi \in (0,\infty)
        .
    \]

\medbreak
{\bfseries 2. The varifold $V$ is stationary.}
        Let $X$ be a compactly supported smooth vector field on $\R^4$.
        Fix $k\in \N$ such that
        $
          \spt X \subset
              \{ x \setcolon \| x \| < 2^{k} \}
        $.
        We have
\[
         \left|
                \vari V \indn (X)
         \right|
        =
         \left|
              \int \diver_S X(x) \intd V \indn(x,S)
         \right|
        \le
           \| X \| _{C^1} \cdot \mass (V \indn)
        \le
           \| X \| _{C^1} \, C \indn M \indn
           .
\]
        Since $\sum_{n\ge -k} C \indn M \indn$ converges by \eqref{eq:conv}, we have
\begin{multline}\label{eq:V-vari}
        \vari V (X)
        =
              \int \diver_S X(x) \intd V(x,S)
\\
        =
           \sum_{n\ge -k}
              C \indn
              \int \diver_S X(x) \intd V \indn (x,S)
        =
           \sum_{n\ge -k}
              C \indn
                \vari V \indn (X)
        .
\end{multline}
       Next we use
       \eqref{eq:layer-V-vari} and \eqref{eq:layer-V-vari-B}
       to calculate
       $
           \sum_{n = -k}^{m}
                C \indn
                \vari V \indn (X)
       $.
       The first term is zero since
               \NewNewBsmall
               the support of $\vari V^{-k}$ is disjoint with
               \NewNewEsmall
       the support of $X$,
       next terms mutually cancel
       ($C \indn  c \indn = C \indn [+1]$, $R_1 \indn = R_4 \indn [+1]$)
       and
       what remains from
       the last one can be transformed so that we see
       that it
       converges to $0$. Formally,
\begin{align*}
           \sum_{n = -k}^{m}
                C \indn
                \vari V \indn (X)
           &=
           \sum_{n = -k}^{m}
              \left(
                     C \indn
                     \Mucircles_{R_4 \indn, \infty} (X)
                 -
                     C \indn
                     c \indn
                     \Mucircles_{R_1 \indn, \infty} (X)
              \right)
\\
           &=
                     C \indX [-k]
                     \Mucircles_{R_4 \indX [-k], \infty} (X)
                 -
                     C \indX [m]
                     c \indX [m]
                     \Mucircles_{R_1 \indX [m], \infty} (X)
\\
           &\overset{\hbox to 0pt{\hss$\scriptstyle \eqref{eq:Binfty}\hss$}}{
           =
           }
                 -
                     C \indX [m]
                     c \indX [m]
                \int\limits
                 _{F(( R_1 \indX [m]  \cdot S_1(\R^2))\times S_1(\R^2))}  X\cdot N
                             \ \frac{\intdx \, \Hausd^2}{ 2 \pi R_1 \indX [m]}
\\
           &\overset{\hbox to 0pt{\hss $\scriptstyle x=R_1 \indX [m] u$\hss}}{
           =
           }
           \ \ \   %
                 -
                     C \indX [m+1]
                \int\limits
                 _{F(S_1(\R^2)\times S_1(\R^2))}  X (R_1 \indX [m] u) \cdot N(u)
                             \ \frac{\intdx\, \Hausd^2 (u)}{ 2 \pi }
           \to 0
\end{align*}
      as $m\to \infty$
      since
      $ \lim R_1 \indX [m] = 0 $,
      $ \lim C \indX [m+1] = C \indX [\infty]$,
      and mainly
      $X(\rho u) \to X(0)$ uniformly as $\rho \to 0$
      and
\[
                \int\limits
                 _{F(S_1(\R^2)\times S_1(\R^2))}  N(u)
                             \ \frac{\intdx\, \Hausd^2 (u)}{ 2 \pi }
          =
                0
          .
\]
Therefore the sum in \eqref{eq:V-vari} is zero,
$\vari V(X)=0$ for arbitrary smooth compactly supported $X$,
and
$V$ is a stationary varifold.

\medbreak
{\bfseries 3. The tangents to $V$.}
        First we describe (without proof) the tangents to $V$:
        \[
            \VarTan_0 V
            =
              \{
                    \underbrace
                    {
                    (C \indX [\infty] /2 \pi)
                    \,
                    V_{\{\zeta\,R_1 \indX [-i] \}_{i\in \Z}}
                    }
                    _ { V _ \zeta }
                 \setcolon
                    \zeta > 0
              \}
        \]
        where $R_1 \indX [i]$ is as above
        and
        $V _ {\{r_i\}}$ as in \eqref{eq:Vseq}, \eqref{eq:V1def}, \eqref{eq:V2def}.
        Due to a ``periodicity'',
        $\zeta$ can be restricted to $ [R_1 \indX [0], R_1 \indX [-2] ) = [1,4)$.
        Then
        $V_\zeta$
        are mutually different
        and therefore not conical (cf.~Lemma~\ref{l:notcon,differ}).

        \NewNewBsmall
        (Recall that $V _ \zeta$ are $2$-varifolds supported by
        a $3$-dimensional cone in $\R^4$. In alternating
        layers, $V _ \zeta$ assume two different directions,
        namely those mentioned in \eqref{eq:differentSpan}.)\relax
        \NewNewEsmall

        To finish the formal proof of the theorem we do not need anything more than
        to pick out a single tangent varifold and show that it is not conical.
        Let $\lambda _ i = 4^{-i}$.
        Then $\lambda _ i R_1 \indn = R_1 \indn [+2i] $ and (see \eqref{eq:sptV-simple}, \eqref{eq:sptV-B-simple})
        \[
            \spt \left(
                      \eta_{0, \lambda_i\ \ImageVarifold } V \indn [+2i]
                 \right)
                 \subset
                     G_2( A_{R_1 \indn}^{R_4 \indn} )
                   \cap
                     G_{\RadAndX{D_{n+2i}}}^{\,\varepsilon \indn [+2i] }
        \]
        where $D_n$ is either symbol $\JOneThree$ ($n$ even) or $\JOneTwo$ ($n$ odd).
        Therefore $D_{n+2i} = D_{n}$ and
        \[
            \spt \left(
                      \eta_{0, \lambda_i\ \ImageVarifold } V
                 \right)
                 \subset
                  \bigcup_{n\in \Z}
                    \left(
                             G_2( A_{R_1 \indn}^{R_4 \indn} )
                           \cap
                             G_{\RadAndX{D_n}}^{\,\varepsilon \indn [+2i] }
                    \right)
            .
        \]
        From \eqref{eq:Vn-mass},
        \begin{multline}\label{eq:VR0-mass}
            \mass
              \left(
                 \left(
                      \eta_{0, \lambda_i\ \ImageVarifold } V
                 \right)
                 \mrest
                     G_2( A_{R_1 \indX [0] }^{R_4 \indX [0] } )
              \right)
           =
            (\lambda_i)^{-2}
            \mass
              \left(
                      V
                   \mrest
                     G_2( A_{R_1 \indX [2i] }^{R_4 \indX [2i] } )
              \right)
\\
           =
            (\lambda_i)^{-2}
            C  \indX [2i]
            \mass
                   (
                      V \indX [2i]
                   )
           \ge
            (\lambda_i)^{-2}
             C \indX [\infty]
             \frac \pi 2
             ( (R_4 \indX [2i] )^2
              -(R_1 \indX [2i] )^2)
           =
             \frac {3\pi} 2
             C \indX [\infty]
           .
        \end{multline}

        By the compactness theorem for Radon measures (\cite[p.~242, p.~22]{Simon}),
        there is a varifold $C$ and
        a subsequence of $\{\lambda_i\}$ (denoted by $\{\lambda_i\}$ again) such that
        $  \eta_{0, \lambda_i\ \ImageVarifold } V  \to  C  $.
        (We note without proof that
         in fact it is not necessary to pass to a subsequence
         since even the original sequence is convergent.)
        Hence $C \in \VarTan_0 V$.
        From the above,
        \begin{equation}\label{eq:C-mass}
            \mass
              \left(
                 C
                 \mrest
                     G_2( A_{R_1 \indX [0] }^{R_4 \indX [0] } )
              \right)
           \ge
             \frac {3\pi} 2
             C \indX [\infty]
           >
             0
        \end{equation}
        and
        \[
            \spt
                   C
                 \subset
                  \bigcup_{n\in \Z}
                    \left(
                             G_2( A_{R_1 \indn}^{R_4 \indn} )
                           \cap
                             G_{\RadAndX{D_n}}^{\, \varepsilon }
                    \right)
        \]
        for every $\varepsilon>0$ and thus also for $\varepsilon=0$.
        In particular
        \begin{align}\label{eq:dilatation1}
            \spt
                   C
                           \cap
                             G_2( \inter A_{R_1 \indX [0] }^{R_4 \indX [0]} )
                 &\subset
                             G_{\RadAndX{D_0}}^{\, 0 }
                  ,
        \\
        \label{eq:dilatation2}
            \spt
                   C
                           \cap
                             G_2( \inter A_{R_1 \indX [1] }^{R_4 \indX [1]} )
                 &\subset
                             G_{\RadAndX{D_{1}}}^{\, 0 }
        \end{align}
        where $\inter M$ denotes the interior of $M$.
        From \eqref{eq:dilatation2},
        \begin{equation}\label{eq:dilatation3}
            \spt
                   \left(
                      \eta_{0, 1/2\ \ImageVarifold }
                   C
                   \right)
                           \cap
                             G_2( \inter A_{R_1 \indX [0] }^{R_4 \indX [0]} )
                 \subset
                             G_{\RadAndX{D_{1}}}^{\, 0 }
            .
        \end{equation}
        Assume that $C$ is conical.
        Then
        $\spt C = \spt ( \eta_{0, 1/2\ \ImageVarifold } C )$.
        Since
        $ G_{\RadAndX{D_0}}^{\, 0 }$
        and
        $G_{\RadAndX{D_1}}^{\, 0 }$
        are disjoint
        (see \eqref{eq:J1324J1234}),
        we see that \eqref{eq:dilatation1} and  \eqref{eq:dilatation3} is possible only when
        \[
              \spt C
                           \cap
                             G_2( \inter A_{R_1 \indX [0] }^{R_4 \indX [0]} )
            =
              \emptyset
        \]
        which is a contradiction with \eqref{eq:C-mass} and \eqref{eq:nullsphere}.
        Hence $C$ is not conical.
\qed
\end{proof}
\begin{theorem}\label{thm:conical}
   There is a stationary rectifiable $2$-varifold
   $V$ in $\R^4$ that has
   at least two different
   conical tangents
   at\/ $0$
   and\/ $0<\theta^2(V, 0)<\infty$.
\end{theorem}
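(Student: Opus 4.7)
The plan is to modify the construction of Theorem~\ref{thm:nonconical} so that each annular layer becomes logarithmically wide, i.e., $R_4^{(n)}/R_1^{(n)}\to\infty$ as $|n|\to\infty$, while keeping the alternation of the two building-block types. Concretely, I would set $R_1^{(n)}=2^{-2^n}$ for $n\ge 0$ (and symmetrically for $n<0$), retain $R_2^{(n)}=(1+\varepsilon^{(n)})R_1^{(n)}$, $R_3^{(n)}=(1-\varepsilon^{(n)})R_4^{(n)}$, $R_4^{(n)}=R_1^{(n-1)}$, with $\varepsilon^{(n)}=1/(4(n^2+1))$, let $V^{(n)}$ be the type-1 varifold from Lemma~\ref{l:system-A} for even $n$ and the type-2 varifold from Lemma~\ref{l:system-B} for odd $n$, define the cumulative factors $C^{(n)}$ by the same telescoping product as in Theorem~\ref{thm:nonconical}, and set $V=\sum_{n\in\Z}C^{(n)}V^{(n)}$. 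Parts 1 and 2 of the proof of Theorem~\ref{thm:nonconical} (Radon property, stationarity via telescoping first variations, rectifiability, and $0<\theta^2(V,0)=C^{(\infty)}\pi<\infty$) transfer verbatim, as they use only $\sum\varepsilon^{(n)}<\infty$ together with $R_1^{(n)}\to 0$ and $R_4^{(-n)}\to\infty$.

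To extract the two tangents, I would dilate at scales in the geometric middle of the even, resp.\ odd, layers: $\lambda_i^{(1)}=\sqrt{R_1^{(2i)}R_4^{(2i)}}$ and $\lambda_i^{(2)}=\sqrt{R_1^{(2i+1)}R_4^{(2i+1)}}$. Under $\eta_{0,\lambda_i^{(1)}}$ the $2i$-th layer maps to $[2^{-2^{2i-2}},2^{2^{2i-2}}]$, which engulfs any fixed annulus $A_{s_1}^{s_2}$ once $i$ is large, while every other layer is pushed outside a prescribed compact subset of $\R^4\setminus\{0\}$. Combining this with \eqref{eq:V-mass} yields the local mass bound required for compactness (\cite[p.~242]{Simon}); extract a subsequence with $\eta_{0,\lambda_i^{(1)}\ \ImageVarifold}V\to C_1$, and similarly $C_2$ from $\lambda_i^{(2)}$. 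A direct rescaling of \eqref{eq:V-mass} gives $\mass(C_j\mrest G_2(A_{s_1}^{s_2}))=C^{(\infty)}\pi(s_2^2-s_1^2)$ for all $0<s_1<s_2<\infty$, so both $C_j$ are nonzero. From \eqref{eq:sptV-simple}, \eqref{eq:sptV-B-simple} one has $\spt V^{(2i)}\subset G_{\RadAndOneThree}^{\,\varepsilon^{(2i)}}$ and $\spt V^{(2i+1)}\subset G_{\RadAndOneTwo}^{\,\varepsilon^{(2i+1)}}$, and since $\varepsilon^{(n)}\to 0$ and both target sets are closed, passage to the weak limit forces $\spt C_1\subset G_{\RadAndOneThree}^{\,0}$ and $\spt C_2\subset G_{\RadAndOneTwo}^{\,0}$; by \eqref{eq:J1324J1234} these are disjoint off the origin, so $C_1\neq C_2$.

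The principal technical obstacle is to show that $C_1$ and $C_2$ are conical. Fix $\mu>0$. Because $R_4^{(2i)}/R_1^{(2i)}\to\infty$, the rescaled scale $\mu\lambda_i^{(1)}$ eventually lies in the interior of $[R_1^{(2i)},R_4^{(2i)}]$, so the extraction argument applied to $\mu\lambda_i^{(1)}$ in place of $\lambda_i^{(1)}$ produces a limit $C_1'$ with the same support and mass characterization as $C_1$. Since $\eta_{0,\mu\lambda_i^{(1)}\ \ImageVarifold}V=\eta_{0,\mu\ \ImageVarifold}\bigl(\eta_{0,\lambda_i^{(1)}\ \ImageVarifold}V\bigr)\to\eta_{0,\mu\ \ImageVarifold}C_1$, the equality $C_1'=C_1$ would give $\eta_{0,\mu\ \ImageVarifold}C_1=C_1$, i.e., conicality. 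The identification $C_1'=C_1$ is the delicate step: I would combine the pointwise tangent-plane prescription $T_xC_1=\Span\{N(x),\JOneThree N(x)\}$ dictated by $G_{\RadAndOneThree}^{\,0}$, the mass normalization, and the rotational symmetry inherited from the Fubini decomposition \eqref{eq:V1eqidef} (the discretization $V_{00}^{r_1,r_2,k^{(n)}}$ has rotational symmetry of order $k^{(n)}\to\infty$) to match $C_1$ with the non-rectifiable varifold $(C^{(\infty)}/2\pi)\,V_{1,0,\infty}$ from Section~\ref{s:non-rect}; conicality then follows from \eqref{eq:F-radial}. The same reasoning gives $C_2=(C^{(\infty)}/2\pi)\,V_{2,0,\infty}$, distinct from $C_1$ by Lemma~\ref{l:notcon,differ}.
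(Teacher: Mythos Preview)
Your construction and the extraction of two distinct tangents run parallel to the paper's proof: the paper takes $R_1^{(n)}=2^{-n^3}$ and dilation scales $\lambda_i=i\,R_1^{(2i)}$, $\tilde\lambda_i=i\,R_1^{(2i+1)}$, but the essential point---that $R_4^{(n)}/R_1^{(n)}\to\infty$ so that a single layer eventually swallows any fixed annulus under rescaling---is the same as yours, and your derivation of $\spt C_1\subset G_{\RadAndOneThree}^{\,0}$, $\spt C_2\subset G_{\RadAndOneTwo}^{\,0}$, $C_1\neq C_2$ matches the paper's.

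Where you diverge is the conicality step, and here you are working much harder than necessary. You call it ``the principal technical obstacle'' and propose to prove $\eta_{0,\mu\,\ImageVarifold}C_1=C_1$ by explicitly identifying $C_1$ with $(C^{(\infty)}/2\pi)\,V_{1,0,\infty}$ via support, mass, and rotational symmetry. That identification is exactly what the paper declines to prove (``Without proof we claim\dots''), and your sketch of it is incomplete: the $\alpha$-rotational symmetry of the limit is only inherited from discrete symmetries of growing order scattered across the internal mini-layers of each $V^{(n)}$, and making this rigorous takes real work.

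The paper's route (left implicit in its last line) is a one-line observation you already have the ingredients for. Since $V$ is stationary with $0<\theta^2(V,0)<\infty$, every tangent $C$ has $\mu_C$ conical \cite[42.2]{Simon}. Your support inclusion $\spt C_1\subset G_{\RadAndOneThree}^{\,0}$ says that for $\mu_{C_1}$-a.e.\ $x\neq 0$ the fibre over $x$ is the single plane $S(x)=\Span\{N(x),\,\JOneThree N(x)\}$; hence $C_1$ is the push-forward of $\mu_{C_1}$ under $x\mapsto(x,S(x))$. Because $S(\lambda x)=S(x)$ for all $\lambda>0$, conicality of $\mu_{C_1}$ immediately gives $\eta_{0,\lambda\,\ImageVarifold}C_1=C_1$. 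The same argument with $\JOneTwo$ handles $C_2$. This bypasses the identification entirely and removes what you flagged as the delicate step.
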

        \NewBrefsug
        The proof differs from the proof of the previous theorem mainly in a
        different definition of the sequences of radii $R_1, R_2, R_3, R_4$:
        taking $R_3/R_2$ large, the middle ``conical'' part becomes dominant.
        \NewErefsug
\begin{proof}
      For $n\in \Z$, let
\begin{align*}
      \varepsilon \indn &= 1/4(n^2+1)
      \\
      R_1 \indn &=
                         2^{-n^3}
      \\
      R_2 \indn &= (1+\varepsilon \indn ) R_1 \indn
      \\
      R_3 \indn &= (1-\varepsilon \indn ) R_4 \indn
      \\
      R_4 \indn &=
              R_1 \indn [-1]
      .
\end{align*}
 Note that $\{n^3\}$ is a strictly increasing sequence with
 increments at least one, hence
 $  R_1 \indn <  R_2 \indn <  R_3 \indn <  R_4 \indn $.
 Repeating the construction  of Theorem~\ref{thm:nonconical}
 we obtain
 a rectifiable stationary $2$-varifold $V$,
 but now the varifold's tangents at $0$ are different.

 Without proof we claim that,
 with $c = C \indX [\infty] / 2\pi$,
 $c V_{1,0,\infty}$ and $c V_{2,0,\infty}$
 (see Section~\ref{s:non-rect},
 \eqref{eq:V1def}, \eqref{eq:V2def})
 are two different (Lemma~\ref{l:notcon,differ}) conical tangent varifolds to $V$ at $0\in\R^4$.
 There are also tangent varifolds of the form
 $c ( V_{1,0,\rho} + V_{2,\rho,\infty} )$
 and
 $c ( V_{2,0,\rho} + V_{1,\rho,\infty} )$,
 $\rho > 0$; they are not conical, but they are ``conical near $0$''.\relax
 \footnote{We believe a slightly more complicated construction
 gives an example of a varifold whose all tangents are conical but
 the tangent at a point is non-unique.
 Basically, $\{
 \JOneThree, \JOneTwo
 \}$
 has to be replaced by a curve $\{ J(t) \setcolon t\in [0,1] \}$.
 A varifold would be used
 that takes directions in $G_{\RadAndX{J( j/2^k )}}^{\,1/(n^2+1)}$ on
 $A_{R_1 \indn}^{R_4 \indn} ( \R^4)$
 whenever $|n|=2^k + j > 2$, $k,j,\in \N$, $j \le 2^k$.}

 We will give the detailed proof for existence of two
 different conical tangent varifolds at $0$.
 Let
        $\lambda_i = i R_1 \indX [2i]$
 and
 $\tilde \lambda_i = i R_1 \indX [2i+1]$.

 Note that, for $i\to \infty$,
 $R_1  \indX [2i] / \lambda_i = 1/i \to 0$
 while
 $R_4  \indX [2i] / \lambda_i = 2^{-(2i-1)^3+(2i)^3} /i \to \infty$.
 We have
        \[
            \spt \left(
                      \eta_{0, \lambda_i\ \ImageVarifold } V \indX [2i]
                 \right)
                 \subset
                     G_2( A_{R_1 \indX [2i] / \lambda_i}^{R_4 \indX [2i] / \lambda_i} )
                   \cap
                     G_{\RadAndX{D_{2i}}}^{\,\varepsilon \indX [2i] }
        \]
        where $D_{2i} = D_0$ is the symbol  ``$\JOneThree$". Hence
        \begin{equation}\label{eq:spt-blowup-tg}
            \spt \left(
                      \eta_{0, \lambda_i\ \ImageVarifold } V
                 \right)
                 \subset
                             G_2( A_{0}^{R_1 \indX [2i] / \lambda_i} )
                  \cup
                    \left(
                             G_2( A_{R_1 \indX [2i] / \lambda_i}^{R_4 \indX [2i] / \lambda_i} )
                           \cap
                             G_{\RadAndX{D_0}}^{\,\varepsilon \indX [2i] }
                    \right)
                  \cup
                             G_2( A_{R_4 \indX [2i] / \lambda_i}^{\infty} )
            .
        \end{equation}

 As in the proof of the previous theorem,
 we pass to a subsequence (denoted by $\{\lambda_i\}$ again) if necessary,
 so that
 $\eta _{0,       \lambda_i, \ \ImageVarifold} V \to        C \in \VarTan_0 V$
 and
 $\eta _{0,\tilde \lambda_i, \ \ImageVarifold} V \to \tilde C \in \VarTan_0 V$.

    By \eqref{eq:spt-blowup-tg},
        \[
            \spt
                   C
                 \subset
                            G_2( \{0\} )
                         \cup
                            \bigcap _{ \varepsilon >0 }
                             G_{\RadAndX{D_0}}^{\,\varepsilon }
                 =
                            G_2( \{0\} )
                         \cup
                             G_{\RadAndX{D_0}}^{\, 0 }
            .
        \]
        By the same argument,
        \[
            \spt
                \tilde
                   C
                 \subset
                            G_2( \{0\} )
                         \cup
                             G_{\RadAndX{D_1}}^{\, 0 }
        \]
        where $D_1 = ``\JOneTwo\, "$.
        Hence $C=\tilde C$ is posssible (cf.\ again \eqref{eq:J1324J1234})
        only if $\spt C \cup \spt \tilde C \subset   G_2( \{0\} )$.
        However,
        for sufficiently large $i\in \N$
        we have
          $R_4 \indX [2i] / \lambda_i > 2$,
          $R_1 \indX [2i] / \lambda_i < 1$
        and,
        by \eqref{eq:V-mass} and \eqref{eq:V-mass-B},
        \begin{multline*}
                \mass ( ( \eta_{0, \lambda_i \ \ImageVarifold} V ) \mrest G_2 ( A _ {1} ^ {2} ) )
              =
                (\lambda_i)^{-2}
                \mass
                  \left
                  (
                        V \mrest G_2 ( A _ {\lambda_i} ^ {2\lambda_i} )
                  \right
                  )
 \\
              =
                (\lambda_i)^{-2}
                C \indX [2i]
                \mass
                  \left
                  (
                        V  \indX [2i] \mrest G_2 ( A _ {\lambda_i} ^ {2\lambda_i} )
                  \right
                  )
 \\
              \ge
                (\lambda_i)^{-2}
                C \indX [\infty]
                \frac \pi 2
                ( (2\lambda_i) ^ 2 - (\lambda_i) ^ 2)
                =
                \frac {3\pi} 2
                C \indX [\infty]
              >
                0
        \end{multline*}
        and therefore $C \neq \tilde C$ are two different conical tangents to $V$.
\qed
\end{proof}

\end{document}